\newcommand{\PP}{\mathbb{P}}
\definecolor{darkpastelgreen}{rgb}{0.1, 0.8, 0}
\newcommand{\rev}{}
\newcommand{\revv}{}
\newcommand{\kk}{\kappa}
\newcommand{\hht}{{\Delta t}}
\newcommand{\hhx}{{\Delta \x}}
\newcommand{\hhp}{{\Delta \p}}
\newcommand{\hh}{{\triangle}}
\newcommand{\mN}{\mathcal{N}}
\newcommand{\Sym}{\mathbb{S}}
\newcommand{\bp}{\mathbf{p}}
\newcommand{\ind}{\mathds{1}}
\newcommand{\ou}{\bar{u}}
\newcommand{\ov}{\bar{v}}
\newcommand{\tu}{\tilde{u}}
\newcommand{\ie}{i.e.\;}
\newcommand{\mM}{\mathcal{M}}
\newcommand{\hx}{\hat{\x}}
\newcommand{\dt}{\partial_t}
\newcommand{\mL}{{ \left(-\partial_t - \mathcal{L}\right)}}
\newcommand{\DI}{\Delta(I)}
\newcommand{\RR}{\mathbb{R}}
\newcommand{\NN}{\mathbb{N}}
\newcommand{\DD}{\mathrm{D}}
\newcommand{\ot}{\bar{t}}
\newcommand{\ox}{\bar{\x}}
\newcommand{\oX}{\widebar{\X}}
\newcommand{\oY}{\widebar{\Y}}
\newcommand{\op}{\bar{\p}}
\newcommand{\EE}{\mathbb{E}}
\newcommand{\dd}{\mathrm{d}}
\newcommand{\mF}{\mathcal{F}}
\newcommand{\FF}{\mathbb{F}}
\newcommand{\bi}{\mathbf{i}}
\newcommand{\ee}{\mathrm{e}}
\newcommand{\mO}{\mathcal{O}}
\newcommand{\mT}{\mathcal{T}}
\newcommand{\mS}{\mathcal{S}}
\newcommand{\mB}{\mathcal{B}}
\newcommand{\mD}{\mathcal{D}}
\newcommand{\x}{\mathrm{x}}
\newcommand{\y}{\mathrm{y}}
\newcommand{\z}{\mathrm{z}}
\newcommand{\X}{\mathrm{X}}
\newcommand{\Y}{\mathrm{Y}}
\newcommand{\p}{\mathrm{p}}
\newcommand{\q}{\mathrm{q}}
\newcommand{\PPas}{\PP\mbox{-a.s.}}
\newcommand{\T}{\intercal}
\newcommand{\pnx}{\p^{n,\x}}
\newcommand{\Mnx}{M^{n,\x}}
\newcommand{\mMnx}{\mM^{n,\x}}
\newcommand{\mNnx}{\mN^{n,\x}}
\newcommand{\mtNnxk}{\widetilde{\mN}^{n,\x_\ell,\kappa}}
\newcommand{\mtMnxk}{\widetilde{\mM}^{n,\x_\ell,\kappa}}
\newcommand{\psinx}{\psi^{n,\x}}
\newcommand{\pinxl}{\pi^{n,\x_\ell}}
\newcommand{\pinx}{\pi^{n,\x}}
\newcommand{\pinX}{\pi^{n,\oXnx_{n+n'}}}
\newcommand{\varpinx}{\varpi^{n,\x}}
\newcommand{\lambdanx}{\lambda^{n,\x}}
\newcommand{\lambdanxl}{\lambda^{n,\x_\ell}}
\newcommand{\lambdanX}{\lambda^{n,\oXnx_{n+n'}}}
\newcommand{\tpnxk}{\tilde{\p}^{n,\x_\ell,\kk}}
\newcommand{\tpinxk}{\tilde{\pi}^{n,\x,\kk}}
\newcommand{\tlambdanxk}{\tilde{\lambda}^{n,\x,\kk}}
\newcommand{\oXnx}{\oX^{n,\x}}
\newcommand{\oXnxl}{\oX^{n,\x_\ell}}
\newcommand{\oXny}{\oX^{n,\y}}
\newcommand{\mLx}{{\ell ip}_\x}
\newcommand{\mLt}{{ho\ell}_t}
\newcommand{\mLp}{{\ell ip}_\p}
\newcommand{\mNN}{\mathcal{N}^\rho_{d,1,\lambda,\kappa}}
\newcommand{\mGG}{\mathcal{G}^{\rho^s}_{\gamma,d,1,\lambda,\kappa}}
\newcommand{\mGGn}{\mathcal{G}^{\rho^{s,n+1}}_{\gamma^{n+1},d,1,\lambda,\kappa}}
\newcommand{\mSS}{\mathcal{S}^{\rho^s}_{d,\lambda,\kappa}}
\newcommand{\mSSn}{\mathcal{S}^{\rho^{s,n+1}}_{d,\lambda,\kappa}}
\newcommand{\mNNs}{\mathcal{N}^{\rho^s}_{d,1,\lambda,\kappa}}
\newcommand{\ouhkRL}{\ouh_{\kk,\mD}}
\newcommand{\ovhkRL}{\ovh_{\kk,\mD}}
\newcommand{\tuhRL}{\tu^\hh_{\mD}}
\newcommand{\ouh}{{\ou^\hh}}
\newcommand{\ovh}{{\ov^\hh}}
\newcommand{\tuh}{\tu^\hh}
\newcommand{\epskrL}{\varepsilon_{\kk,\mD}}
\DeclareMathOperator{\Tr}{Tr}
\DeclareMathOperator{\vexp}{Vex_p}
\DeclareMathOperator{\Int}{Int}
\DeclareMathOperator{\diam}{diam}
\DeclareMathOperator*{\argmin}{\arg\min}
\DeclareMathOperator*{\argmax}{\arg\max}
\newtheorem{theorem}{Theorem}[section]
\newtheorem{defn}[theorem]{Definition}
\newtheorem{lem}[theorem]{Lemma}
\newtheorem{prop}[theorem]{Proposition}
\newtheorem{cor}[theorem]{Corollary}
\newtheorem{algo}{Algorithm}[section]
\newtheorem{rem}[theorem]{Remark}
\numberwithin{equation}{section} 
\numberwithin{table}{section} 
\Crefname{lem}{Lemma}{Lemmas}
\Crefname{figure}{Figure}{Figures}
\Crefname{equation}{}{}
\Crefname{defn}{Definition}{Definitions}
\Crefname{prop}{Proposition}{Propositions}
\Crefname{algpseudocode}{Algorithm}{Algorithms}
\title{Numerical approximation of Dynkin games with asymmetric information}
\author{\v{L}ubom\'{i}r Ba\v{n}as}
\address{Department of Mathematics, Bielefeld University, Germany}
\email{banas@math.uni-bielefeld.de}
\author{Giorgio Ferrari}
\address{Center for Mathematical Economics, Bielefeld University, Germany}		
\email{giorgio.ferrari@uni-bielefeld.de}
\author{Tsiry Avisoa Randrianasolo}
\address{Chair of Applied Mathematics, Montanuniversität Leoben, Austria}		
\email{tsiry.randrianasolo@outlook.com}
\begin{document}
\begin{abstract}
We propose an implementable, neural network-based structure preserving probabilistic numerical approximation for a generalized obstacle problem describing the value of a zero-sum differential game of optimal stopping with asymmetric information.
The target solution depends on three variables: the time, the spatial (or state) variable, and a variable from a standard $(I-1)$-simplex which represents the probabilities with which the $I$ possible configurations of the game are played.
The proposed numerical approximation preserves the convexity of the continuous solution as well as the lower and upper obstacle bounds.
We show convergence of the fully-discrete scheme to the unique viscosity solution of the continuous problem and present a range of numerical studies to demonstrate its applicability.

\end{abstract}

\keywords{zero-sum games of optimal stopping, asymmetric information, probabilistic numerical
	approximation, discrete convex envelope, convexity constrained Hamilton–Jacobi–Bellmann equation,
	viscosity solution, GroupSort neural networks}
\subjclass{60G40, 60H35, 65C20, 65M12, 65K15, 49N70, 49L25, 52B55, 68T07}
\maketitle

\section{Introduction}\label{sec:intro}


We consider the generalized obstacle problem

\begin{equation}\label{eq:hjb}
\begin{split}
\max\Big\{\max \Big\{\min\Big\{\mL u ,\,u- \p^\T f\Big\},\,u- \p^\T h\Big\},\,-\lambda(\p,\DD_{\p}^2u)\Big\} & = 0,
\\
u(T,\;\x,\;\p) & =  \p^\T g(\x),
\end{split}
\end{equation}
where $u: [0,T]\times\RR^d\times\DI \rightarrow \RR$, $T>0$ and $\DI = \{\p=(p_1,\ldots,p_I)^\T\in [0,1]^I;\ \sum_{i =1}^{I}p_i = 1 \}$ is the set of $\mathbb{R}^{I}$-valued vectors of probabilities (see below for a detailed explanation).
Furthermore, $\mathcal{L} u :=  \tfrac12 \Tr \big[aa^\T \DD_{\x}^2 u\big] +  b\cdot\DD_{\x} u$, where the coefficient functions { $a:[0,T]\times\RR^d\rightarrow \mathbb{R}^{d\times d}$, $b:[0,T]\times\RR^d\rightarrow \mathbb{R}^d$},
as well as the barrier functions $	f,h:[0,T]\times\RR^d\rightarrow\RR^I$ and the terminal condition $g:\RR^d\rightarrow\RR^I$
are given.  The mapping $\lambda:\DI\times {\Sym^{d\times d}}\rightarrow\RR$ enforces the convexity of the solution $u\equiv u(t,\x,\p)$ with respect to the probability variable $\p$ and is given as
\begin{equation*}
\lambda(\p,A) := \min_{\z\in T_{\DI}(\p)\setminus \{0\}} \frac{\z^\T A\;\z}{\hphantom{^2}\lvert \z\rvert^2},
\end{equation*}
where $\Sym^{d\times d}$ denotes the set of $d\times d$ matrices that are symmetric and  $\mathrm{T}_{\DI}(\p)$ denotes the tangent cone to $\DI$ at $\p$; i.e., $\mathrm{T}_{\DI}(\p):=\overline{\bigcup_{r>0}\tfrac{1}{r}(\DI-\p)}$.

{Under suitable assumptions on the problem's data, 
it is shown in \cite[Theorem 3.4]{gruen2013on} that
\Cref{eq:hjb} admits a unique viscosity solution in the class of bounded, uniformly continuous functions that are Lipschitz-continuous  and convex with respect to $\p$.
The viscosity solution identifies with the value of a zero-sum game of optimal stopping (Dynkin game) with asymmetric information.}
More precisely, given a stochastic basis $(\Omega, \mF, \FF, \PP)$, with the augmented canonical filtration $\FF:= \{\mF_s: t\leq s\leq T\}$ generated by an $\RR^d$-valued Brownian motion $B:= \{B_s: t\leq s\leq T \}$, operator $\mL$ 
is the so-called Dynkin operator associated to the It\^o-process $\X^{t,\x}\coloneqq\{\X^{t,\x}_s:\, 0 \leq t\leq s\leq T\}$ defined on $(\Omega, \mF, \FF, \PP)$ and satisfying
\begin{equation}\label{eq:state}
\X^{t,\x}_s = \x + \int_{t}^{s}b(r, \X^{t,\x}_r)\dd r + \int_{t}^{s}a(r, \X^{t,\x}_r)\dd B_r.
\end{equation}

Consider then the zero-sum game of optimal stopping in which -- under a given scenario $i\in I$ selected randomly with probability  $p_i$ at time $0$ -- two players decide to stop the evolution of the process $\X^{t,\x}$ in order to optimize the performance criterion:
\begin{equation}
\label{eq:functional}
\EE\Big[f_i(\sigma, \X^{t,\x}_{\sigma})\mathds{1}_{\sigma < \tau, \sigma<T} + h_i(\tau, \X^{t,\x}_{\tau})\mathds{1}_{\tau \leq \sigma, \tau<T} + g_i(\X^{t,\x}_{T})\mathds{1}_{\tau =\sigma=T}\Big], \quad i \in \{1, 2, \dots, I\}.
\end{equation}
Exercising a stopping rule $\tau$, Player 1 pays Player 2 the random amount $h_i(\tau, \X^{t,\x}_{\tau})$, while stopping at time $\sigma$, Player 2 receives the random payoff $f_i(\sigma, \X^{t,\x}_{\sigma})$. Finally, if neither of the players has stopped before the final maturity $T$, then {\rev Player 1 pays Player 2} the amount $g_i(\X^{t,\x}_{T})$. Clearly, Player 1 aims at minimizing \eqref{eq:functional}, while Player 2 at maximizing \eqref{eq:functional}.
The asymmetric information feature of the game arises because only one player knows the exact scenario under which the game is played - and therefore the realized values of the game's payoffs $f_i, h_i, g_i$ -- while the other player is informed only about the probability according to which each of the possible scenarios is selected.

Zero-sum games of optimal stopping have been proposed by E.B.\ Dynkin in 1967 (see \cite{Dynkin1967}) as an extension of problems of optimal stopping. Since their introduction a large number of contributions using probabilistic and/or analytic techniques arose, with the main aim of proving existence of a value for the game and a characterization of the (Stackelberg) equilibrium stopping times. We refer to the introduction of \cite{gruen2013on} for a detailed literature review. In particular, Dynkin games have received increasing attention in the mathematical finance literature since the work by Y.\ Kifer \cite{Kifer}, where it is shown that zero-sum games of optimal stopping provide the fair value of the so-called Game or Israeli Options. 

The literature on Dynkin games with asymmetric information is quite recent and still only counts a very limited number of contributions. Other than \cite{gruen2013on}, we like to refer to \cite{LempaMatomaki}, where the value and the equilibrium strategies of a zero-sum game of optimal stopping have been constructed in the case in which both players have different knowledge about the occurrence of a default. In \cite{dynkin19}, it is considered a zero-sum game of optimal stopping where the payoffs depend on two independent continuous-time Markov chains, the first Markov chain being observed only by Player 1, the second Markov chain being observed only by Player 2. This in particular implies asymmetric information, in the sense that players employ stopping rules that are stopping times with respect to different filtrations. More recently, in a very general not necessarily Markovian setting, it is proved in \cite{DeAMePa} that continuous-time zero-sum Dynkin games with partial and asymmetric information admit a value in randomized stopping times. As a byproduct, existence of equilibrium strategies for both players are also shown to exist. Finally, explicit results via free-boundary methods have been obtained in \cite{dynkin22} for a class of games in which the players have asymmetric information regarding the drift of the one-dimensional diffusion underlying the game.

Concerning the numerical approximation of games with asymmetric information we refer to \cite{banas2020numerical}, \cite{gruen2012aprobabilistic} and the references therein.
Furthermore, we mention  \cite{hure2020deep} which studies probabilistic neural network approximations schemes for obstacle problems arising from game theory (with complete and symmetric information).

In the current paper we propose a structure-preserving probabilistic numerical approximation of \eqref{eq:hjb}.
Following \cite{banas2020numerical}, we combine the time-discretization with a convexity-preserving discretization in the probability variable.
Furthermore, we introduce a neural network approximation in the spatial variable to obtain an implementable numerical scheme and show its convergence to the viscosity solution to \eqref{eq:hjb}.
We present numerical studies where we compare the the proposed neural network-based algorithm to a semi-Lagrangian scheme with piecewise linear interpolation in the spatial variable.
Furthermore, we demonstrate the ability of the proposed numerical approximation to capture the expected structure of free boundaries 
arising in the problem of pricing Israeli $\delta$-penalty put option (cf.\ \cite{KuhnKyprianou}, \cite{Kyprianou} among others) with asymmetric information.


The paper is organized as follows. The notation and preliminaries are introduced in \Cref{sec:prelimiaries}.
In \cref{sec:TimeDiscret} we introduce the semi-discrete and the fully-discrete probabilistic numerical approximation schemes and show their convergence to the viscosity solution of \eqref{eq:hjb}.
Sections~\ref{sec:cont}~and~\ref{sec:viscosityPty} contain auxiliary results needed for the convergence of the semi-discrete scheme:
in Section~\ref{sec:cont} we discuss regularity properties of the semi-discrete numerical solution and in \Cref{sec:viscosityPty} we show that the accumulation points of the semi-discrete numerical approximation satisfy the viscosity sub- and supersolution properties.
Regularity properties of the fully-discrete solution are studied in Section~\ref{sec_full}.
Numerical experiments are presented in \Cref{sec:num_res}.


\section{Notation and preliminaries}\label{sec:prelimiaries}
\subsection{Notation}\label{subsec:notation}
We list here some notation in complement to those introduced in \Cref{sec:intro}:
\begin{itemize}
\item Let $m\in\NN$. For any $\x = (x_1,\ldots,x_m)^\T\in\RR^m$, we denote the Euclidean norm of $\x$ by
\[
\lvert \x\rvert := \sqrt{\lvert x_1\rvert^2 + \ldots + \lvert x_m\rvert^2}.
\]
\item For any Lipschitz continuous function $\varphi:[0,T]\times\RR^d \rightarrow \RR^{m}$, we denote the Lipschitz coefficient by 
\[
\llbracket\varphi\rrbracket :=\sup_{(t,\x)\neq (s,\y)}\frac{\lvert \varphi(t,\x) -\varphi(s,\y)\rvert}{\lvert t-s\rvert + \lvert \x-\y\rvert}.
\]
\item A continuous function $\varphi:[0,T]\times\RR^d\times\DI\rightarrow \RR^{m}$ is said to be bounded if the norm
\[
\lVert \varphi\rVert_\infty :=\sup\big\{\lvert \varphi(t,\x,\p)\rvert: (t,\x,\p)\in [0,T]\times\RR^d\times\DI\big\}
\]
is finite.
\item For $t\in E\subseteq[0,T]$, we denote by $\mT_{E}$ the set of $\FF$-stopping times with values in $E$. The subset of stopping rules will be denoted by
\[
\mT_{E}^t :=\big\{\tau\in\mT_{E}:\tau \mbox{ is independent of }\mF_t\big\}.
\]
\end{itemize}
We will frequently use the fact that the maps $(\x,\y)\mapsto \max\{\x,\y\}$ and $(\x,\y)\mapsto \min\{\x,\y\}$ satisfy for every $(\x,\x')$ and $(\y,\y')$ in $\RR^2$ the following inequalities
{
\begin{align}
\label{eq:lipx7}
\lvert \max\{\x,\x'\}-\max\{\y,\y'\}\rvert&\leq \max\{\lvert \x-\y \rvert,\, \lvert \x'-\y' \rvert\},
\\
\label{eq:lipx8}
\lvert \min\{\x,\x'\}-\min\{\y,\y'\}\rvert&\leq \max\{\lvert \x-\y \rvert,\, \lvert \x'-\y' \rvert\}.
\end{align}
}
\subsection{Viscosity solution}\label{subsec:gnlAssumptions}
The following assumptions are required to hold for the data in \Cref{eq:hjb}:
\begin{enumerate}[label=($A_\arabic*)$]
\item{\label{A1}} The functions $a:[0,T]\times\RR^d\rightarrow {\revv \RR^{d\times d}}$, $b:[0,T]\times\RR^d\rightarrow \RR^d$, $f:[0,T]\times\RR^d\rightarrow \RR^I$, $g:\RR^d\rightarrow \RR^I$, and $h:[0,T]\times\RR^d\rightarrow \RR^I$ are uniformly Lipschitz continuous and uniformly bounded.
\item{\label{A2}} Furthermore, for all  $(\x,\p)\in\RR^d\times\DI$, we have that
\begin{equation*} 
\p^\T f(T,\x)\leq \p^\T g(\x)\leq \p^\T h(T,\x).
\end{equation*}
\item{\label{A3}} We assume that, $\PPas$, {\rev for all $\x\in\mD$,} $\X^{t,\x}\in\mD$, for some bounded domain $\mD$ of $\RR^n$.
\end{enumerate}

{\rev
\begin{rem}\label{rem_bndd}
Assumption $(A_3)$ is required for the application of the approximation result for neural networks, which only holds on bounded domain.
For problems where $(A_3)$ does not hold, one may proceed by considering domains $\mD_R = [-R,R]^d$ for increasing sequence of numbers $R >0$
and imposing an artificial boundary condition if $\X^{t,\x}\notin \mD_R$. Then for each fixed $R$ the results below apply and
one may take $R\rightarrow \infty$ to obtain convergence on $\mathbb{R}^d$.
\end{rem}}


We recall the definition of the viscosity solution \Cref{eq:hjb} below, cf. \cite{gruen2013on}.
\begin{defn}[Viscosity solution]\label{def:viscosity}
A function $w:[0,T]\times \RR^d\times \DI\rightarrow\RR$ is a viscosity solution to \Cref{eq:hjb} if the following two properties hold:
\begin{itemize}
\item[(i)]
$w$ is a \textbf{viscosity subsolution} to \Cref{eq:hjb} if for all $(\ot, \ox, \op)\in [0,T)\times\RR^d\times\Int(\DI)$ and for any smooth test function
$\varphi:[0,T]\times\RR^d\times \Int(\DI)\rightarrow\RR$
such that $(w-\varphi)$ has a strict maximum at $(\ot, \ox, \op)$ with $(w-\varphi)(\ot, \ox, \op)= 0$ it holds that
\begin{equation}\label{eq:hjbsub}
\max\Big\{\max\Big\{\min\Big\{\mL\varphi ,\,\varphi-\p^\T f\Big\},\,\varphi-\p^\T h\Big\},\,-\lambda(\p,\DD_{\p}^2\varphi)\Big\} 
\leq 0,
\end{equation}
at $(\ot, \ox, \op)$.
\item [(ii)] $w$ is a \textbf{viscosity supersolution} to \Cref{eq:hjb} if for all $(\ot, \ox, \op)\in [0,T)\times \RR^d\times\DI$ and for any smooth test function
$\varphi:[0,T]\times\RR^d\times\DI\rightarrow\RR$
such that $(w-\varphi)$ has a strict minimum at $(\ot, \ox, \op)$ with $(w-\varphi)(\ot, \ox, \op)= 0$ it holds that
\begin{equation}\label{eq:hjbsup}
\begin{split}
\max\Big\{\max\Big\{\min\Big\{\mL\varphi ,\,\varphi-\p^\T f\Big\},\,\varphi-\p^\T h\Big\},\,-\lambda(\p,\DD_{\p}^2\varphi)\Big\} 
\geq 0,
\end{split} 
\end{equation}
at $(\ot, \ox, \op)$,
\end{itemize}
{\rev and in addition $w(T,\;\x,\;\p)  =  \p^\T g(\x)$ for any $\x,\p\in \mathbb{R}^d\times\DI$.}
\end{defn}

Under the assumptions \ref{A1} and \ref{A2}, the existence of a unique viscosity solution to \Cref{eq:hjb} in the sense of the above definition is guaranteed by the next theorem, cf. \cite[Theorem\,3.4]{gruen2013on}.
\begin{theorem}\label{thm:uniqueSol}
There exists a  unique viscosity solution to \Cref{eq:hjb} in the class of bounded uniformly continuous functions, which are convex and uniformly Lipschitz in $\p$.
\end{theorem}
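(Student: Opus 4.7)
The plan is to treat existence and uniqueness separately, as is standard for degenerate Hamilton--Jacobi--Bellman obstacle problems of this type. For existence, I would take as candidate the (upper or lower) value of the Dynkin game \eqref{eq:functional} under asymmetric information. Following the now-classical approach for differential games with incomplete information (Cardaliaguet and co-authors), I would let the informed player use randomized stopping rules that may partially reveal the scenario index $i$, while the uninformed player's stopping times are adapted to a filtration that does not observe $i$; the value $u(t,\x,\p)$ is then defined as a min--max (or max--min) over the corresponding strategy spaces in $\mT_{[t,T]}^{t}$. Boundedness and Lipschitz continuity in $(t,\x)$ follow from \ref{A1} and standard flow estimates for \eqref{eq:state}; Lipschitz continuity in $\p$ is a consequence of the linear dependence of \eqref{eq:functional} on $\p$ together with the boundedness of $f,g,h$. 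Convexity in $\p$ is the signature structural feature of games with asymmetric information: because the informed player can always randomize between strategies optimal at the pure priors, the value at a mixed prior lies below the corresponding convex combination of pure-scenario values.

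With the regularity in place, I would derive a dynamic programming principle over infinitesimal time intervals and pass to the viscosity limit to recover \eqref{eq:hjb}. The nested obstacle terms $u-\p^\T f$ and $u-\p^\T h$ encode the optimal stopping conditions of Player 2 and Player 1 respectively, and the constraint $-\lambda(\p,\DD_\p^2 u)\le 0$ expresses exactly that $u$ is convex in $\p$ along the tangent cone $\mathrm{T}_{\DI}(\p)$. For uniqueness, I would prove a comparison principle: every bounded USC subsolution convex and Lipschitz in $\p$ is dominated by every bounded LSC supersolution sharing these properties. The route is to double the variables in $(t,\x,\p)$, penalize quadratically with parameter $\varepsilon\to 0$, add a term $\eta/(T-t)$ to rule out a maximum at $t=T$ (where \ref{A2} controls the terminal condition), and apply Ishii's lemma to extract second-order information at the contact point. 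A case analysis on the outer $\max$ in \eqref{eq:hjb} then combines the sub- and supersolution inequalities to contradict a strict positivity of $\sup(u_1-u_2)$.

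The hard part will be handling the fully degenerate convexity constraint $-\lambda(\p,\DD_\p^2 u)\le 0$ inside the doubling argument. Unlike the $\x$-diffusion $\mathcal{L}$, which contributes a coercive term through Ishii's lemma, $\lambda$ is a pure tangential constraint and produces no gain that can absorb the penalization. The natural workaround is to exploit the a priori Lipschitz continuity and convexity of $u_1,u_2$ in $\p$ directly, restricting the test-function analysis to $\op\in\Int(\DI)$ where $\mathrm{T}_{\DI}(\p)=\{\z\in\RR^I:\sum_i z_i=0\}$ is a fixed hyperplane independent of $\p$, and then extending to $\partial\DI$ by continuity of both sides in $\p$. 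An additional subtlety is the doubly nested $\max\{\max\{\min\{\cdot,\cdot\},\cdot\},\cdot\}$ structure, which forces a careful enumeration of cases at the contact point; once these issues are controlled, the conjunction of the comparison principle with the value-function construction above yields the claimed existence and uniqueness in the prescribed regularity class.
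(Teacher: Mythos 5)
The paper itself contains no proof of this theorem: the result is quoted directly from \cite[Theorem 3.4]{gruen2013on}, so there is no internal argument to compare against. Your outline of the \emph{existence} half is a fair summary of the strategy of that reference: define $u(t,\x,\p)$ as the value of the Dynkin game in which the informed player may use randomized stopping rules, obtain boundedness and Lipschitz continuity in $\x$ and $\p$ from \ref{A1} and the linearity of \eqref{eq:functional} in $\p$, obtain convexity in $\p$ from a splitting argument, and pass from a dynamic programming principle to the viscosity inequalities. (Two small caveats: in $t$ one generally only gets H\"older-$1/2$ regularity, which is all the "uniformly continuous" conclusion needs; and convexity requires the splitting inequality for \emph{every} decomposition $\p=\sum_i\lambda_i\p_i$ with $\p_i\in\DI$, not only decompositions into pure scenarios.)

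The gap is in the \emph{uniqueness} half, and it is not the one you flag. The obstruction in the comparison argument is not that $\lambda$ "produces no gain that can absorb the penalization," nor the $\p$-dependence of the tangent cone --- the operator $\mathcal{L}$ is equally degenerate and the standard Ishii machinery handles it through the Lipschitz structure of $a,b$, not through coercivity, and on $\Int(\DI)$ the cone is already a fixed hyperplane. The real problem is structural: the supersolution inequality \eqref{eq:hjbsup} is an \emph{outer maximum}, so at the contact point it may be satisfied solely because $-\lambda(\op,\DD^2_\p\varphi)\ge 0$, in which case you extract no information at all about the parabolic/obstacle branches and have nothing to combine with the subsolution inequality. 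Restricting to $\Int(\DI)$ and extending to $\partial\DI$ by continuity does not remove this alternative, so the case analysis you describe cannot be closed as stated. The resolution in the literature (Cardaliaguet's comparison theorem for the double-obstacle problem with a convexity constraint, adapted by Gr\"un to the Dynkin setting) is to exploit the assumed convexity of the subsolution in $\p$ --- which you correctly build into the class --- and to select the contact point $\op$ as an \emph{extreme point} of the set of maximizers of $\p\mapsto (u_1-u_2)(\ot,\ox,\cdot)$ after a strictly concave perturbation in $\p$; at such a point the convexity-constraint branch can be shown not to be the active one for the supersolution. Equivalently, one passes to the Fenchel conjugate in $\p$ and compares sub- and supersolutions of the dual equation. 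Without this selection device (or the dual formulation) the doubling argument does not go through.
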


\section{Numerical approximation}\label{sec:TimeDiscret}
In \Cref{subsec:discrete1} we introduce a probabilistic numerical numerical scheme which is discrete in time and in the probability variable.
We combine the semi-discrete scheme with a neural network approximation in the spatial variable to obtain a fully-discrete numerical approximation in \Cref{sec:nnscheme}.

\subsection{Semi-discrete numerical approximation in \texorpdfstring{$t$}{t}  and \texorpdfstring{$\p$}{p}}\label{subsec:discrete1} 
For $N\in\mathbb{N}$ we consider an equidistant partition $\Pi^\hht= \{t_n = n\hht$, $n=0,\ldots,N\}$
of the time interval $[0,T]$ with time-step $\hht:=T/N$. 
We define the Euler approximation of the stochastic process $\X^{t_n,x}$ \Cref{eq:state} over the time interval $(t_n, t_{n+\ell})$ as
\begin{equation}\label{eq:euler1}
\oX^{n,\x}_{n+\ell} = \x + \sum_{j=n}^{n+\ell-1} \Big(b(t_j,\oX^{n,\x}_j)\hht  + a(t_j,\oX^{n,\x}_j) {\xi^j} \sqrt{\hht}\Big),
\end{equation}
where {$\{\xi^{n}\}_{n\in \mathbb{N}}$} are  $\RR^d$-valued {i.i.d.} random variables with zero mean and unit variance.

For the discretization with respect to the probability variable $\p$ we adopt the approach of \cite{banas2020numerical}.
To this end we consider a family of regular partitions $\{\mM_{\hhp}\}_{\hhp>0}$ of $\Delta(I)$ into open simplices $K$
with mesh-size $\hhp := \max_{K\in\mM_\hhp}\{\diam(K)\}$
such that ${\Delta(I)} = \cup_{K\in\mM_\hhp}\overline{K}$. 
The set of vertices of all $K\in \mM_\hhp$ is denoted by $\mN_{\hhp}:=\{\p_1,\ldots,\p_M\}$. 

Given the step size $\hh := (\hht,\hhp)$ we introduce the following semi-discrete numerical scheme: 
\begin{itemize}
\item For $m=1,\dots,M$ define the map $\x\mapsto \ouh (t_N,\x,\p_m)$ as
\begin{equation}\label{eq:fdis0}
\ouh (t_N,\x,\p_m)= \p_m^\T g(\x).
\end{equation}

\item For $n = N-1,\ldots,0$ proceed as follows:

\begin{enumerate}[label = $\circ$]
\item For $m=1,\dots,M$ define the map $\x\mapsto\oY^{n,\x,\p_m}$ as
\begin{equation}
\label{eq:fdisc1}
\oY^{n,\x,\p_m} := \min\Big\{\max\Big\{\EE\big[\ouh(t_{n+1},\oXnx_{n+1},\p_m)\big],\,\p_m^\T f(t_n,\x)\Big\},\,\p_m^\T h(t_n,\x)\Big\}.
\end{equation}

\item For $m=1,\dots,M$, determine the map $ \p_m \mapsto \ouh (t_n,\x,\p_m)$ as
\begin{equation}
\label{eq:fdisc11}
\ouh(t_n,\x,\p_m) = \vexp\,\big [\oY^{n,\x,\p_1},\ldots,\oY^{n,\x,\p_M}\big ](\p_m).
\end{equation}
\end{enumerate}
\end{itemize}
We note that for $\p\in\DI$, the lower convex envelope in \Cref{eq:fdisc11} is the solution of the
minimization problem (cf.\,\cite{carnicier1992convexity})
\begin{equation}\label{eq:fdisc2} 
\vexp\, [\oY^{n,\x,\p_1},\ldots,\oY^{n,\x,\p_M}](\p)= \min \Big\{\sum_{i = 1}^{M}\lambda_i \oY^{n,\x,\p_i}:\sum_{ i= 1}^M\lambda_i = 1,\,\lambda_i\ge 0, \p =\sum_{ i= 1}^{M}\lambda_i\p_i\Big\}.
\end{equation}

Following \cite{banas2020numerical} we consider the (convexity preserving) data-dependent simplicial partition $\mMnx_{\hhp}$ of $\DI$ with nodes $\mNnx_{\hhp}:=\{\pnx_1,\ldots,\pnx_{\Mnx}\}\subseteq\mN_\hhp$ such that the piecewise linear interpolant of the data values at the nodes $\mNnx_\hhp$ over the partition $\mMnx_{\hhp}$ (for precise definition see \Cref{eq:fdisc5} below) agrees with the discrete data values $\{\p_m, \ouh(t_n,\x,\p_m)\}_{m=1}^M$.
We note that the partition $\mMnx_{\hhp}$ does not necessarily coincides
with the original mesh $\mM_\hhp$, {\rev in particular the mesh size of $\mMnx_{\hhp}$ does not necessarily decrease for $\Delta p \rightarrow 0$ (the mesh size of $\mM_\hhp$)}.

We consider the set of piecewise linear Lagrange basis functions $\{\psinx_i, i=1,\ldots,\Mnx\}$ associated with the set of nodes $\mNnx_{\hhp}$ of the partition $\mMnx_{\hhp}$.
We recall the following properties of the Lagrange basis functions which will be frequently used
throughout the paper: (a) $\psinx_i(\pnx_j) = \delta_{ij}$, where $\delta_{ij}$ is the Kronecker delta, and (b) $\sum_{i=1}^{\Mnx}\psinx_i(\p) = 1$ for $\p \in \Delta(I)$.
We note that (a) implies that at any point $\p\in\DI$ there are at most $I$ basis functions with nonzero value; hence the sum in (b) reduces to $\sum_{i=1}^I$.

For $n=0,\dots,N$, $\x\in \RR^d$, we define the piecewise linear interpolant $\p\mapsto\ouh(t_n,\x,\p)$ of the discrete lower convex envelope   $\{\ouh(t_n,\x,\p_1),\ldots,\ouh(t_n,\x,\p_M)\}$ 
over the convexity preserving partition $\mMnx_{\hhp}$ as
\begin{equation}\label{sumu}
\ouh(t_n,\x,\p) :=\sum_{i=1}^{\Mnx} \ouh(t_n,\x,\p_{m(\pnx_i)})\psi_{m(\pnx_i)}^{n,x}(\p),
\end{equation}
where $m(\pnx_i)\in\NN$ is the index of $\pnx_i$ in $\mN_\hhp$; i.e., $\pnx_i := \p_{m(\pnx_i)}$ for some $\p_{m(\pnx_i)}\in \mN_\hhp$, cf. \cite{banas2020numerical}.

Note that for each $\p\in\DI$ there exist at most $I$ non-zero basis functions (i.e., the basis functions associated with the $I$ vertices $\{\pinx_i\}_{i=1}^I$ of the simplex 
$K=\mathrm{conv}\{\pinx_i, \,\, i =1,\dots,I\} \in \mMnx_{\hhp}$ for which $p\in \overline{K}$) with nonzero value 
which we denote by $\{\lambdanx_1,\ldots,\lambdanx_I\}$ such that \eqref{sumu} is equivalent to
\begin{align}
\label{eq:fdisc5}
\ouh(t_n,\x,\p) =  \sum_{i=1}^{I}\ouh(t_n,\x,\pinx_i(\p))\lambdanx_i(\p),
\end{align}
moreover 
\begin{align}\label{eq:basis}
\sum_{i=1}^{I}\lambdanx_i(\p) = 1 \text{ and } \p =  \sum_{i=1}^{I} \pinx_i(\p)\lambdanx_i(\p).
\end{align}

Next, we define for every $t\in[t_n, t_{n+1}]$
\begin{align}\label{eq:fdisc3}
\ouh(t,\x,\p) =  \ouh(t_n,\x,\p)+ \frac{\ouh(t_{n+1},\x,\p) -   \ouh(t_n,\x,\p)}{\hht}(t-t_n).
\end{align}

%

\begin{theorem}\label{thm:mainRes}
Assume \ref{A1} and \ref{A2}. Then the sequence $\{\ouh\}_{\hh}$ converges uniformly on every compact subsets of $[0,T]\times\RR^d\times \DI$, i.e.,
\begin{equation*}
\lim_{\substack{(s,\y,\q)\to (t,\x,\p)\\ \hh\rightarrow 0}} \ouh(s,\y,\q) = u(t,\x,\p),
\end{equation*}
where $u$ is the unique viscosity solution to \Cref{eq:hjb} in the class of bounded uniformly continuous
functions, which are uniformly Lipschitz and convex in $\p$.
\end{theorem}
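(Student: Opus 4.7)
The plan is to use the Barles--Souganidis half-relaxed limits framework, together with the comparison/uniqueness provided by \Cref{thm:uniqueSol}. First, using the uniform boundedness of the data (assumption \ref{A1}) and an iteration argument on \Cref{eq:fdisc1}, I would establish a uniform bound $\|\ouh\|_\infty\leq C$, independent of $\hh$. This is precisely one of the regularity results of \Cref{sec:cont}. Once $\{\ouh\}_\hh$ is uniformly bounded, I define the upper and lower half-relaxed limits
\begin{equation*}
\ou(t,\x,\p) := \limsup_{\substack{(s,\y,\q)\to(t,\x,\p)\\ \hh\to 0}} \ouh(s,\y,\q), \qquad \underline{u}(t,\x,\p) := \liminf_{\substack{(s,\y,\q)\to(t,\x,\p)\\ \hh\to 0}} \ouh(s,\y,\q),
\end{equation*}
which are respectively upper- and lower-semicontinuous and satisfy $\underline{u}\leq \ou$ by construction. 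The scheme preserves by design (cf.\ the $\vexp$ step \Cref{eq:fdisc11}) both convexity and uniform Lipschitz regularity of $\p\mapsto \ouh(t,\x,\p)$, so both half-relaxed limits inherit these properties; this puts them in the uniqueness class of \Cref{thm:uniqueSol}.

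Next I would invoke the results of \Cref{sec:viscosityPty} to show that $\ou$ is a viscosity subsolution and $\underline{u}$ is a viscosity supersolution of \Cref{eq:hjb}. For an interior test function $\varphi$ touching $\ou$ (or $\underline{u}$) at $(\ot,\ox,\op)$, one considers a sequence of near-maximizers $(t_n,\x_n,\p_n)$ of $(\ouh-\varphi)$ converging to $(\ot,\ox,\op)$, plugs $\varphi$ into the discrete fixed-point relation \Cref{eq:fdisc1}--\Cref{eq:fdisc11}, and Taylor-expands. Passing to the limit $\hh\to 0$ through the expectation over the Euler step $\oXnx_{n+1}$ recovers the Dynkin operator $\mL\varphi$; the min/max operations pass to the limit via \Cref{eq:lipx7}--\Cref{eq:lipx8}; and the $\vexp$ step yields the constraint $-\lambda(\op,\DD_\p^2\varphi)\leq 0$ (resp.\ $\geq 0$) by a standard argument using \Cref{eq:fdisc2}. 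The terminal condition $\ou(T,\x,\p)=\underline{u}(T,\x,\p)=\p^\T g(\x)$ follows from \Cref{eq:fdis0}, interpolation \Cref{sumu}, continuity of $g$, and the compatibility assumption \ref{A2}. Once sub- and supersolution properties are in hand, \Cref{thm:uniqueSol} gives $\ou\leq u\leq \underline{u}$, which combined with $\underline{u}\leq \ou$ yields $\ou=\underline{u}=u$; the equality of the half-relaxed limits with a continuous function is exactly local uniform convergence on compacts.

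The main obstacle is verifying the viscosity inequality at the level of the half-relaxed limits, and in particular handling the convex-envelope constraint $-\lambda(\p,\DD_\p^2\varphi)$. The nested structure $\max\{\max\{\min\{\cdot,\cdot\},\cdot\},\cdot\}$ means one must analyze several cases according to which inner branch is active near $(\ot,\ox,\op)$, and in each case one has to show that the $\vexp$ operation in \Cref{eq:fdisc11} is consistent with the second-order convexity operator $\lambda$ in the limit; this requires testing the discrete convex envelope against a quadratic perturbation in directions from the tangent cone $\mathrm{T}_{\DI}(\op)$ and exploiting the uniform Lipschitz/convexity bound in $\p$ from \Cref{sec:cont}. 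A secondary technical subtlety is the $\x$-dependence of the partition $\mMnx_{\hhp}$ and of the interpolation \Cref{eq:fdisc5}, which must not destroy the semicontinuity of the limits; this is again handled by the uniform Lipschitz-in-$\p$ estimate. With the delicate limit passage for the convexity constraint established in \Cref{sec:viscosityPty}, the remaining steps are essentially mechanical.
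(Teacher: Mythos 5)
Your strategy---Barles--Souganidis half-relaxed limits plus comparison---is genuinely different from the paper's. The paper instead proves full equicontinuity of $\{\ouh\}_{\hh}$ (uniform Lipschitz continuity in $\x$ and in $\p$, and uniform almost H\"older continuity in $t$; \Cref{lem:lipx,lem:lipp,lem:lipt}), applies Arzel\`a--Ascoli to extract locally uniformly convergent subsequences, shows via \Cref{prop:subvis,prop:supvis} that every subsequential limit is a viscosity solution lying in the uniqueness class of \Cref{thm:uniqueSol}, and concludes that all subsequential limits coincide. The appeal of your route is that it would only require uniform boundedness rather than equicontinuity; but this is precisely where the argument breaks down.

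The gap is in the final step. \Cref{thm:uniqueSol} is a uniqueness statement within the class of bounded, \emph{uniformly continuous} functions that are convex and Lipschitz in $\p$. Your half-relaxed limits $\ou$ and $\underline{u}$ are a priori only upper-, respectively lower-, semicontinuous in $(t,\x,\p)$; the convexity and Lipschitz bound in $\p$ they inherit give no continuity in $(t,\x)$. Hence you cannot invoke \Cref{thm:uniqueSol} to obtain $\ou\leq u\leq\underline{u}$: that requires a strong comparison principle between semicontinuous sub- and supersolutions, which is neither stated nor proved in the paper and is strictly stronger than uniqueness in a class of continuous functions. Repairing this within the paper's toolbox would force you to prove that $\ou$ and $\underline{u}$ are uniformly continuous, i.e.\ to re-derive the equicontinuity estimates of \Cref{sec:cont}---at which point Arzel\`a--Ascoli applies and the half-relaxed limit machinery is superfluous. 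A secondary problem: \Cref{prop:subvis,prop:supvis} are formulated for the \emph{uniform} limit $w$ of a subsequence and their proofs use this convergence (the maximizing sequence $(\ot_k,\ox_k,\op_k)$ is produced from locally uniform convergence via the cited lemma of Bardi--Capuzzo-Dolcetta, and the identification $(\ouh-\varphi^\hh)(t_\nu,\oX^{n_k,\ox_k}_{\nu},\op_k)=0$ on $\hht_k\partial\mB$ relies on it), so they cannot be invoked verbatim for half-relaxed limits; the semicontinuous version would need its own, separate proof.
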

\begin{proof}

We verify that the sequence $\{\ouh\}_{\hh}$ satisfies the assumptions of the Arzel\`a--Ascoli theorem, cf. \cite[Section~III.3]{yosida2013functional}.
The equiboundedness follows by \Cref{lem:lipx} and the equicontinuity by \Cref{lem:lipx,lem:lipp,lem:lipt}. 
Hence, up to a subsequence, on every compact subsets of $[0,T]\times\RR^d\times \DI$, the sequence $\{\ouh\}_{\hh}$ converges uniformly to a limit $w$ that is bounded, uniformly continuous and is convex and uniformly Lipschitz continuous in $\p$.

By \Cref{prop:subvis} (resp.\,\Cref{prop:supvis}), $w$ satisfies the viscosity sub- and super-solution property, hence it is a viscosity solution of \Cref{eq:hjb} in the sense of \Cref{def:viscosity}. 

By \Cref{thm:uniqueSol}, the viscosity solution of \Cref{eq:hjb} is unique in the class of bounded, uniformly continuous functions, which are convex and uniformly Lipschitz in $\p$.
Since every limit of $\{\ouh\}_{\hh}$ is bounded, uniformly continuous and convex and uniformly Lipschitz in $\p$, the limit is unique.

\end{proof}															


\subsection{Fully-discrete approximation and convergence}\label{sec:nnscheme}

{Below we describe a modification of the numerical scheme \Cref{eq:fdis0}-\eqref{eq:fdisc11} which employs a Feedforward Neural Network approximation in the state variable $\x$.
The resulting fully-discrete algorithm \Cref{eq:dnn0}-\Cref{eq:fnn3} is practically implementable.}

To describe the neural network approximation scheme we loosely follow the exposition of \cite{bishop2006pattern,hure2020deep}.
Typically a continuous function $\x\mapsto \Phi(\x)$ can be expanded in terms of a linear combination of fixed nonlinear basis functions $\varphi_j(\x)$ and take the form
\begin{equation}\label{eq:typic_expansion}
	\Phi(\x;\theta ) = \sum_jw_j\varphi_j(\x),
\end{equation} 
where $\x=(x_1,\ldots,x_d)^\T\in\RR^d$ are \textit{called input variables}, $\theta = (w_j)_j$ is a set of numbers called \textit{parameters}, and $\Phi$ the \textit{target variables} (the function we want to predict or approximate).
The idea behind neural networks is to extend \Cref{eq:typic_expansion} with basis functions that depend on a linear combination of the inputs, where the coefficients in the linear combination are adaptive parameters.

Depending on the nature of the input and the assumed distribution of the target variables it is possible to generate a number of neural networks expansions of different characteristics.
{We first introduce} the so-called \textit{Feedforward Neural Network} for standard regression problems.
It can be described as a series of layers of functional transformations.
Each subsequent layer has a connection from the previous layer.
In the first layer or \textit{input layer}, we take the image of the input variable $\x$ through an affine transformation that takes the form 
\begin{equation}\label{eq:layer1}
	 w^{(0)}\x + b^{(0)}.
\end{equation}
We shall refer to the parameter $w^{(0)}\in\RR^{\kappa\times d}$ as the \textit{weight matrix} and the parameter  $b^{(1)}\in\RR^\kappa$ as the \textit{bias vector}.
The superscript $(0)$ indicates that the corresponding parameters are in the first layer of the network.
Next, a differentiable, nonlinear\textit{ activation function} $\rho:\RR\mapsto\RR$ acts component-wise on the \textit{activation vector} \Cref{eq:layer1} and transforms it into  a vector $\rho(w^{(0)}\x + b^{(0)})$ of \textit{hidden units} that constitutes the second layer of the neural network also know as \textit{hidden layer}.
Finally, we take the image of the vector of hidden unit vector through another affine transformation to get a scalar-valued \textit{output variable}
\begin{equation}\label{eq:layer2}
	\Phi_\kappa(\x;\theta) = w^{(1)}\cdot \rho(w^{(0)}\x + b^{(0)})  + b^{(1)}
\end{equation}
with $w^{(1)}\in\RR^\kappa$ and $b^{(1)} \in \RR$.
The set of all weight and bias parameters have been grouped together into $\theta\coloneqq\{w^{(0)}, b^{(0)},  w^{(1)}, b^{(1)}\}$.
The function \cref{eq:layer2} is a Feedforward Network with 1 hidden layer, which contains $\kappa$ neurons. 
{\rev
We can generalize the above construction and consider a Feeforward Network with several layers $l=0,\ldots,\lambda-1$, where each hidden layer $l=1,\ldots,\lambda-2$  has $\kappa_i$ neurons, that is described by the following sequence of transformations
	\begin{alignat*}{6}
		\varphi_0(\x;\theta_0) &=\rho_0(w^{(0)}\x + b^{(0)}), &\theta_0&\coloneqq\{w^{(0)}, b^{(0)}\} &\mbox{ with }w^{(0)}&\in\RR^{\kappa_0\times d}, \; b^{(0)}\in\RR^{\kappa_0}
		\\
		\varphi_1(\x;\theta_1) &=\rho_1\big( w^{(1)} \varphi_0(\x)  + b^{(1)}\big), &\theta_1&\coloneqq\{w^{(l)}, b^{(l)}\}_{l=0}^1 &\mbox{ with } w^{(1)}&\in\RR^{\kappa_1\times \kappa_0},\; b^{(1)}\in\RR^{\kappa_1}
		\\
		&\vdots& &\vdots  &&\vdots
		\\
		\varphi_{\lambda-1}(\x;\theta_{\lambda-1}) &=\rho_{\lambda-1}\big( w^{(\lambda-1)} \varphi_{\lambda-2}(\x)  + b^{(\lambda-1)}\big),  &\; \theta_{\lambda-1}&\coloneqq\{w^{(l)}, b^{(l)}\}_{l=0}^{\lambda-1} &\mbox{ with }w^{(\lambda-1)}&\in\RR^{1\times \kappa_{\lambda-2}},\; b^{(\lambda-1)}\in\RR^1.
	\end{alignat*}
Like in \cref{eq:layer2}, we set $\rho_{\lambda-1} = \mathrm{Id}$.
Finally, we can define the space of neural networks with input dimension $d$, output dimension 1, $\lambda$ hidden layers, with a sequence of neurons $\kappa\coloneqq\{\kappa_l\}_{l=0}^{\lambda-1}$, with $\kappa_{\lambda-1} = 1$; and a sequence of activation functions $\rho\coloneqq\{\rho_l\}_{l=0}^{\lambda-1}$, with $\rho_{\lambda-1} = \mathrm{Id}$. It is then represented by the parametric set of functions
\begin{equation}\label{eq:fnn}
	\begin{split}
		&\mNN
		\coloneqq\Big\{\x\in\RR^d\mapsto \Phi_\kappa(\x;\theta) = \varphi_{\lambda-1}(\x;\theta_{\lambda-1}),\,\theta=\theta_{\lambda-1} \Big\}.
	\end{split}
\end{equation}
In general, the Neural Network functions in $\mNN$ have a Lipschitz constant $\gamma_\kappa$ that goes to infinity when $\kappa\to\infty$, see \cite{hure2020deep}. This particularity of the Neural Network  will imply a coupling between $\kappa$ and the parameters $\hhx$, and $\hht$ of the numerical scheme proposed in the current paper. As a remedy we introduce a GroupSort activations, which were used in \cite{anil2019sorting} to preserves the gradient norm of the input variable. In \cite{maximilien2022approximation}, the introduction of GroupSort activations enabled the authors to establish a theoretical analysis of the convergence of their numerical scheme.

Let the integer $s\geq 2$, be a grouping size, dividing the number of neurons $\kappa$, i.e., $\kappa = s \tilde{n}$ for some $\tilde{n}\in \mathbb{N}$.
The GroupSort network functions correspond to classical Feedforward Neural Network functions in $\mNNs$ with a specific activation function $\rho^s\coloneqq\{\rho_l^s\}_{l=0}^{\lambda-1}$.
The nonlinear function $\rho^s$ divides its input into groups of size $s$ and sorts each group in
decreasing order.  By enforcing the parameters $\theta_{\lambda-1}\coloneqq\{w^{(l)}, b^{(l)}\}_{l=0}^{\lambda-1}$ of the networks to satisfy for $i=1,\ldots,\lambda-1$ and $j=0,\ldots,\lambda-1$
\begin{equation*}
	\vert w^{(0)}\vert_{2,\infty}\coloneqq\sup_{\vert\x\vert_2=1}\vert w^{(0)}\x\vert_\infty\leq 1,\quad \vert w^{(i)}\vert_\infty\coloneqq\sup_{\vert\x\vert_\infty=1}\vert w^{(i)}\x\vert_\infty\leq 1,\quad \vert b^{(j)}\vert_\infty\leq \zeta,
\end{equation*}
{\revv for a (fixed) constant $\zeta > 0$}. The related GroupSort neural network functions from $\mNNs$ are Lipschitz continuous with a Lipschitz constant $1$.
We denote the space of GroupSort neural network functions from $\mNNs$ that satisfy the aforementioned constraints by
\begin{equation*}\label{eq:groupsort0}
	\mSS \coloneqq\Big\{\Phi_\kappa(\;\cdot\;;\theta)\in \mNNs: \theta \in \Theta_{\kappa}\Big\},
\end{equation*}
with 
\begin{equation}\label{eq:gs_constraints}
	\begin{split}
		\Theta_{\kappa}\coloneqq\Big\{ &\{w^{(l)}, b^{(l)}\}_{l=0}^{\lambda-1} \mbox{ with }w^{(l)}\in\RR^{\kappa_{l-1}\times \kappa_{l-2}} \mbox{ and } b^{(\lambda-1)}\in\RR^{\kappa_{l-1}},\, l=0,\ldots,\lambda -1:
		\\
		&\vert w^{(0)}\vert_{2,\infty}\leq 1,\; \vert w^{(i)}\vert_\infty\leq 1,\, i=1,\ldots,\lambda-1,\; \vert b^{(j)}\vert_\infty\leq \zeta,\, j=0,\ldots,\lambda-1\Big\}.
	\end{split}
\end{equation}
We consider the following set of neural network functions
\begin{equation}\label{eq:groupsort}
	\begin{split}
		\mGG \coloneqq\Big\{\x\in\RR^d\mapsto\Psi_\kappa(\x;\theta)= \gamma\beta\Phi_\kappa\Big(\frac{\x+\alpha}{\beta};\theta\Big),\Phi_\kappa(\,\cdot\,;\theta)\in\mSS, \mbox{ for some } \alpha\in\RR^d,\, \beta \in \RR^+\Big\} .
	\end{split}
\end{equation}
Neural network functions in $\mGG$ are Lipschitz continuous functions with a Lipschitz constant $\gamma$.
}

In addition to the family of partition $\{\mM_{\hhp}\}_{\hhp>0}$ introduced in \Cref{sec:TimeDiscret}
we consider a family of regular partition $\{\mM_{\hhx}\}_{\hhx>0}$ of the spatial domain {\rev $\mathcal{D}\coloneqq [-R,R]^d\subset\mathbb{R}^d$} (see Assumption~\ref{A3}) into open simplices $\mathcal{T}$
with the mesh size $\hhx := \max_{\mathcal{T}\in \mM_\hhx}\mathrm{diam}(\mathcal{T})$ and denote the set of vertices of all $\mathcal{T}\in \mM_\hhx$
as $\mN_{\hhx} :=\{\x_1,\ldots,\x_{L}\}$.


The fully-discrete numerical solution $\ouhkRL(t_n,\x,\p)\approx\ouh(t_n,\x,\p)$
 is constructed by employing the above {\rev GroupSort Neural Network} approximation in the spatial variable
in the semi-discrete scheme \cref{eq:fdis0}-\cref{eq:fdisc11}.
{\rev To show convergence of the numerical approximation we benefit from the approximation result \cite[Proposition~2.1]{maximilien2022approximation}.
I.e. given a Lipschitz continuous function $x \rightarrow \psi(\x)$  with a Lipschitz constant $\gamma$, we have that
to any $\varepsilon>0$ there exists of a GroupSort Neural Network $\Psi_\kk\in\mGG$ {\revv (note that $\zeta$, $\alpha$, $\beta$ in \cref{eq:gs_constraints}, \cref{eq:groupsort} are independent of $\psi$)} such that
\begin{equation*}
	\vert \psi(\x) - \Psi_\kk(\x;\theta)\vert\leq 2R\gamma\varepsilon \qquad \forall \x\in \mD,
\end{equation*}
with $\Psi_\kk$ of grouping size $\lceil  \frac 1\varepsilon\rceil $, depth $\lambda + 1 = 3$, and width $\sum_{l=0}^{\lambda  -1}\kappa_l = \mO(\frac 1\varepsilon)$.
The above statement is equivalent to saying that for any $\varepsilon$ there exists a (sufficiently large) $\kappa\equiv \kappa(\varepsilon)$ and $\Psi_{\kk(\varepsilon)}\in\mGG$ such that
\begin{equation}\label{eq:univ_approx_groupsort}
	\vert \psi(\x) - \Psi_{\kk(\varepsilon)}(\x;\theta)\vert\leq 2R\gamma\varepsilon \qquad \forall \x\in \mD.
\end{equation}
} 

The resulting algorithm reads as:
\begin{itemize}
	\item For $m=1,\dots,M$ and $\ell=1,\dots,L$ define
	\begin{equation}\label{eq:dnn0} 
		\ouhkRL(t_N,\x_\ell,\p_m)= \p_m^\T g(\x_\ell).
	\end{equation}
	
	\item For $n = N-1,\ldots,0$ proceed as follows:
	
	\begin{enumerate}[label = $\circ$]
		\item For  $m=1,\dots,M$ compute:
		\begin{align}
			\label{eq:fnn1}
			\theta_{n+1}(\p_m) &= {\argmin_{\theta\in\Theta_\kk} \frac 1L \sum_{\ell = 1}^L   \vert {\rev \Psi^{n+1}_\kk}(\x_\ell;  { \theta}) - \ouhkRL(t_{n+1},\x_\ell,\p_m) \vert^2 },
		\end{align}
		and define the map $\x\mapsto\oY^{n,\x,\p_m}_\kappa$ by 
		\begin{equation}
			\label{eq:fnn2}
			\oY^{n,\x, \p_m}_{\kk} = \min\Big\{\max\Big\{ \EE\big[{\rev \Psi^{n+1}_\kk}(\oX^{\x,n}_{n+1};\theta_{n+1}(\p_m))\big],\,\p_m^\T f(t_n,\x)\Big\},\,\p_m^\T h(t_n,\x)\Big\}.
		\end{equation}
		
		\item For $m=1,\dots,M$ and $\ell=1,\dots,L$, determine the map ${ \p_m} \mapsto\ouhkRL(t_n,\x_\ell,\p_m)$ as
		\begin{equation}
			\label{eq:fnn3}
			\ouhkRL(t_n,\x_\ell,\p_m) = \vexp\,\big[\oY^{n,\x_\ell, \p_1}_{\kk},\ldots,\oY^{n,\x_\ell, \p_M}_{\kk}\big ](\p_m).
		\end{equation}
	\end{enumerate}
\end{itemize}
For $n=0, \dots, N$, $\ell=1,\ldots,L$,
we denote the (data dependent) convexity preserving simplicial partition of $\DI$ as $\mtMnxk_\hhp$ and
$\mtNnxk_\hhp=\{\tpnxk_1,\ldots,\tpnxk_{\mtMnxk}\}\subseteq\mN_\hhp$ the set of its nodes such that the piecewise linear interpolant of the data values at the nodes $\mtNnxk_\hhp$ over the partition $\mtMnxk_\hhp $ agrees with the values $\{\p_m,\ouhkRL(t_n,\x,\p_m)\}_{m=1}^M$.
Furthermore, we note the respective counterparts of \eqref{eq:fdisc5}, \eqref{eq:basis} as
\begin{align}
	\label{eq:fdisc5en}
	\ouhkRL(t_n,\x,\p) =  \sum_{i=1}^{I}\ouhkRL(t_n,\x_\ell,\tpinxk_i(\p))\tlambdanxk_i(\p),
\end{align}
and
\begin{align}
	\label{eq:basisen}
	\sum_{i=1}^{I}\tlambdanxk_i(\p) = 1,\quad \p =  \sum_{i=1}^{I} \tpinxk_i(\p)\tlambdanxk_i(\p).
\end{align}

For $n = 0,\ldots,N-1$  we define
\begin{equation}\label{def_eps}
	\epskrL^{n}\coloneqq \max_{m = 1,\ldots,M} {\revv \max_{\x_\ell\in \mathcal{N}_{\hhx}}}\,\,\big\vert \Psi^{n+1}_\kk(\x_{{\revv \ell}};\theta_{n+1}(\p_m))   - \ouhkRL(t_{n+1},\x_{{\revv \ell}},\p_m)\big\vert.
\end{equation}
Throughout the paper, by slight abuse of notation, we define $\x\rightarrow \ouhkRL(t_{n+1},\x,\p_m)$ to be the piecewise linear interpolant over the mesh $\mM_\hhx$ (i.e., in the spatial variable $\x$)
of the discrete solution in \Cref{eq:fnn3}.

\begin{rem}\label{rem:process_D}
i) Assumption \ref{A3} together with the additional assumption that the random variables $\{\xi^n\}_{n\in \mathbb{N}}$ satisfy $|\xi^n|< R < \infty$ $\PPas$ (which is  satisfied
if one considers a random walk approximation where for instance $\xi^n = \pm 1$ with probability $1/2$)
ensure that the Euler approximation
\Cref{eq:euler1} satisfies $\oXnx_{n+1}\in \mathcal{D}^*$ $\PPas$ for some bounded domain $\mathcal{D}^*$ with $\mathcal{D} \subset \mathcal{D}^*$.
In the proofs below we assume without loss of generality that $\mathcal{D}^* = \mathcal{D}$.

{\revv
ii)  The neural network in \cref{eq:fnn1} satisfies $\Psi^{n+1}_\kk\in \mGGn$. 
Since $\ouhkRL$ is uniformly Lipschitz continuous with Lipschitz constant $\gamma=\mLx$ (see \Cref{lem:lipx_nn} below),  we set $\gamma^{n+1} = \gamma$ for $n=N-1, \dots, 0$.

iii) The parameters constants $\alpha$, $\beta$ in \cref{eq:groupsort} depend on the spatial domain $\mathcal{D}$, cf. the proof of \cite[Proposition 2.1]{maximilien2022approximation}.}
\end{rem}

\begin{theorem}\label{thm:error_RL}
	Let \ref{A1}-\ref{A3} hold and let $\ouhkRL$, $\ouh$  be the numerical solutions constructed by \cref{eq:dnn0}-\cref{eq:fnn3} and \cref{eq:fdis0}-\cref{eq:fdisc11}, respectively.
  For all $n=0,\ldots,N$ and all $(\x_\ell,\p_m)\in \mathcal{N}_{\hhx}\times\mN_{\hhp}$ it holds that
	\begin{align}\label{eq:error_RL}
		\lvert (\ouhkRL  - \ouh)(t_n,\x_\ell,\p_m)\rvert \leq   {\rev 2N\mLx \hhx} + \sum_{n'= n}^{N-1}\epskrL^{n'},
	\end{align}
{\revv where the constant $\mLx$ is defined in \Cref{lem:lipx} below.}
\end{theorem}
\begin{proof}
	We fix $(n,{\ell},m) \in \{0,\ldots,N\}\times\{1,\ldots,L\}\times \{1,\ldots,M\}$ and assume that, $\PPas$, $\oXnxl_{n+1}\in\mD$.	
	First we suppose that $0\leq  \ouhkRL(t_n,\x_\ell,\p_m)    - \ouh(t_n,\x_\ell,\p_m) $. 
	By \Cref{eq:fdisc5}, \Cref{eq:basis}, \Cref{eq:fdisc5en}, \Cref{eq:basisen}  and the convexity of the map $\p_m\mapsto  \ouhkRL(t_n,\x_\ell,\p_m)$ we have
	\begin{align}
		\notag
		\ouhkRL(t_n,\x_\ell,\p_m)    &- \ouh(t_n,\x_\ell,\p_m) 
		= \ouhkRL(t_n,\x,\p_m) - \sum_{i=1}^{I}\ouh(t_n,\x_\ell,\pinxl_i(\p))\lambdanxl_i(\p_m)
		\\
		\label{eq:ecor0}
		&\leq \sum_{i=1}^{I}\big(\ouhkRL(t_n,\x_\ell,\pinxl_i(\p_m)) - \ouh(t_n,\x_\ell,\pinxl_i(\p_m))\big)\lambdanxl_i(\p_m).
	\end{align}
	Since $F\mapsto\vexp [F]$ is nonexpansive, it follows for every $\pi\in\{\pinxl_1(\p_m),\ldots,\pinxl_I(\p_m)\}$  that 
	{\rev \begin{align}
		\label{eq:ecor1}
		 \ouhkRL(t_n,\x_\ell,\pi) - \ouh(t_n,\x_\ell,\pi) 
		 =  \ouhkRL(t_n,\x_\ell,\pi) - \oY^{n,\x_\ell,\pi} 
		\leq \oY^{n,\x_\ell,\pi}_{\kappa,\mD,L}- \oY^{n,\x_\ell,\pi}.
	\end{align}}
	Recalling \Cref{eq:fdisc1}, \Cref{eq:fnn2}, 
	{
	we express for $\pi\in\{\pinxl_1(\p_m),\ldots,\pinxl_I(\p_m)\}$
	\begin{align*}
		&\lvert\oY^{n,\x_\ell,\pi}_{\kappa,\mD,L}- \oY^{n,\x_\ell,\pi}\rvert =\Big\vert 
		\min\Big\{\max\Big\{ \EE\big[\Psi^{n+1}_\kk(\oX^{n,\x_\ell}_{n+1};\theta_{n+1}(\pi))\big],\,\pi^\T f(t_n,\x_\ell)\Big\},\,\pi^\T h(t_n,\x_\ell)\Big\}
		\\
		&-\min\Big\{\max\Big\{\EE\big[\ouh(t_{n+1},\oX^{n,\x_\ell}_{n+1},\pi)\big],\,\pi^\T f(t_n,\x_\ell)\Big\},\,\pi^\T h(t_n,\x_\ell)\Big\}\Big\vert.
	\end{align*}
	Using \Cref{eq:lipx8} we get
	\begin{align*}
		&\lvert\oY^{n,\x_\ell,\pi}_{\kappa,\mD,L}- \oY^{n,\x_\ell,\pi}\rvert 
		\\
		&\leq \Big\vert 
		\max\Big\{ \EE\big[\Psi^{n+1}_\kk(\oX^{n,\x_\ell}_{n+1};\theta_{n+1}(\pi))\big],\,\pi^\T f(t_n,\x_\ell)\Big\}-\max\Big\{\EE\big[\ouh(t_{n+1},\oX^{n,\x_\ell}_{n+1},\pi)\big],\,\pi^\T f(t_n,\x_\ell)\Big\}\Big\vert,
	\end{align*}
	and then, by \Cref{eq:lipx7},
	\begin{align*}
		&\lvert\oY^{n,\x_\ell,\pi}_{\kappa,\mD,L}- \oY^{n,\x_\ell,\pi}\rvert \leq 
\big\vert 
		\EE\big[\Psi^{n+1}_\kk(\oX^{n,\x_\ell}_{n+1};\theta_{n+1}(\pi))-\ouh(t_{n+1},\oX^{n,\x_\ell}_{n+1},\pi)\big]\big\vert.
	\end{align*}}
 By the above estimates we deduce from  \Cref{eq:ecor1} that
	\begin{align}
		\label{eq:ecor2}
		 \begin{split}
		 	\ouhkRL(t_n,\x_\ell,\pi) &- \ouh(t_n,\x_\ell,\pi)\leq
		 	\big\vert 
		 	\EE\big[\Psi^{n+1}_\kk(\oX^{n,\x_\ell}_{n+1};\theta_{n+1}(\pi))-\ouh(t_{n+1},\oX^{n,\x_\ell}_{n+1},\pi)\big]\big\vert.
		 \end{split}	
	\end{align}
	
		Below we consider $\mN_\hhx$-valued random variables $\hx_{k}$, $k=n+1,\dots, N$ 
		which are defined as follows.
		Due to the assumption \ref{A3}, see also \Cref{rem:process_D} for $1< k \leq N$, there exists a simplex $\mS \equiv \mS(\omega) \in \mM_\hhx$ such that $\oX^{k-1,\x_\ell}_k(\omega) \in \mS(\omega)$ for $\omega\in\Omega$.
		For $\omega\in \Omega$ we then set
		{\rev
		\begin{equation}\label{maxx}
		\hx_{k}(\omega) = \argmax_{\{v_i:i=0,\dots,d\}} \vert\ouhkRL(t_{k},v_i,\pi)  - \ouh(t_{k},v_i,\pi)\vert,
		\end{equation}
		}
    i.e., $\hx_{k}(\omega)\in {\revv \mathcal{N}_{\Delta x}}$ is the vertex of $\mS(\omega) = \mathrm{conv}\{v_0,\dots,v_d\}$ which realizes the greatest difference between the fully-discrete and semi-discrete numerical solution.
    Below we suppress the explicit dependence of $\hx_{k}$, $\mS$ on $\omega \in \Omega$.

		For any {\revv vertex $v_i\in \mathcal{N}_{\Delta x}$ of $\mS$}
		we express
		\begin{align*}
			&\big[\Psi_\kk(\oX^{n,\x_\ell}_{n+1};\theta_{n+1}(\pi))   - \ouh(t_{n+1},\oX^{n,\x_\ell}_{n+1},\pi)\big] 
			\\
			&= \big[\Psi_\kk(\oX^{n,\x_\ell}_{n+1};\theta_{n+1}(\pi))   - \Psi_\kk({\revv v_i};\theta_{n+1}(\pi))\big] + \big[\Psi_\kk({\revv v_i};\theta_{n+1}(\pi))   - \ouhkRL(t_{n+1},v_i,\pi)\big]
			\\
			&+ \big[ \ouhkRL(t_{n+1},v_i,\pi) - \ouh(t_{n+1},v_i,\pi)\big] + \big[ \ouh(t_{n+1},v_i,\pi) - \ouh(t_{n+1},\oX^{n,\x_\ell}_{n+1},\pi)\big].
		\end{align*}
	{\revv
		We take the expectation in the above expression, note \Cref{maxx} and estimate
		\begin{align*}
			&\big\vert\EE\big[\Psi_\kk(\oX^{n,\x_\ell}_{n+1};\theta_{n+1}(\pi))   - \ouh(t_{n+1},\oX^{n,\x_\ell}_{n+1},\pi)\big] \big\vert
			\\
			&\le \big\vert \EE\big[\Psi_\kk(\oX^{n,\x_\ell}_{n+1};\theta_{n+1}(\pi))   - \Psi_\kk({\revv v_i};\theta_{n+1}(\pi))\big]\big\vert + \big\vert\EE\big[\Psi_\kk({\revv v_i};\theta_{n+1}(\pi))   - \ouhkRL(t_{n+1},v_i,\pi)\big] \big\vert
			\\
			&+ \big\vert\EE\big[ \ouhkRL(t_{n+1},v_i,\pi) - \ouh(t_{n+1},v_i,\pi)\big] \big\vert
			+ \big\vert\EE\big[ \ouh(t_{n+1},v_i,\pi) - \ouh(t_{n+1},\oX^{n,\x_\ell}_{n+1},\pi)\big]\big\vert.
		\end{align*}
	}
	 {\revv Note that $\Psi^{n+1}_\kk\in \mGGn$, cf. \eqref{eq:groupsort}. Since,  we take $\gamma^{n+1} = \gamma = \mLx$, see  \Cref{rem:process_D}~ii)},
   the map $\x\mapsto \Psi^{n+1}_\kk(\x;\theta_{n+1}(\pi))$ is uniformly Lipschitz continuous with a Lipschitz constant {$\gamma=\mLx$}.
	 The map $\x\mapsto \ouh(t_{n+1},\x,\pi)$ is uniformly Lipschitz continuous with a Lipschitz constant $\mLx$ by \Cref{lem:lipx}.
	 Hence, we deduce that
	 \begin{align*}
	 	\notag
	 	&\big\vert\EE\big[\Psi^{n+1}_\kk(\oX^{n,\x_\ell}_{n+1};\theta_{n+1}(\pi))   - \ouh(t_{n+1},\oX^{n,\x_\ell}_{n+1},\pi)\big]\big\vert
	 	\\
	 	&\leq  {\rev \mLx}\EE\big[\vert\oX^{n,\x_\ell}_{n+1}   - {\revv v_i}\vert\big] + {\revv{\EE}\big[\big\vert\Psi^{n+1}_\kk( v_i;\theta_{n+1}(\pi))   - \ouhkRL(t_{n+1},v_i,\pi)\big\vert]}
	 	\\
	 	&+ \EE\big[\big\vert \ouhkRL(t_{n+1},\hx_{n+1},\pi) - \ouh(t_{n+1},\hx_{n+1},\pi)\big\vert\big] + {\revv \mLx\EE\big[\vert v_i - \oX^{n,\x_\ell}_{n+1}\vert\big]}.
	 	\end{align*}
	 	{\revv Since $\oX^{n,\x_\ell}_{n+1},v_i\in\mS$ it holds $\PPas$ that $\vert v_i - \oX^{n,\x_\ell}_{n+1}\vert\leq\hhx$.}
	 	Consequently, we estimate
	 	\begin{align*}
	 		\notag
	 		&\big\vert\EE\big[\Psi^{n+1}_\kk(\oX^{n,\x_\ell}_{n+1};\theta_{n+1}(\pi))   - \ouh(t_{n+1},\oX^{n,\x_\ell}_{n+1},\pi)\big]\big\vert\leq {\rev \mLx}\hhx 
        + {\revv \max_{\x_\ell\in\mathcal{N}_{\Delta x}}\big\vert\Psi^{n+1}_\kk(\x_\ell;\theta_{n+1}(\pi))   - \ouhkRL(t_{n+1},\x_\ell,\pi)\big\vert}
	 		\\
	 		&+ \EE\big[\big\vert \ouhkRL(t_{n+1},\hx_{n+1},\pi) - \ouh(t_{n+1},\hx_{n+1},\pi)\big\vert\big] +  {\rev \mLx}\hhx .
	 	\end{align*}
	 	Using the above inequality  in \Cref{eq:ecor2}, we obtain for $\pi\in\{\pinxl_1(\p_m),\ldots,\pinxl_I(\p_m)\}$ that
	 	\begin{align*}
	 		&\ouhkRL(t_n,\x_\ell,\pi) - \ouh(t_n,\x_\ell,\pi)
	 		\leq {\rev (\mLx + \mLx)\hhx} + \epskrL^{n} + \EE\big[\big\vert \ouhkRL(t_{n+1},\hx_{n+1},\pi) - \ouh(t_{n+1},\hx_{n+1},\pi)\big\vert\big].
	 	\end{align*}

	For {\revv $0\geq\ouhkRL(t_n,\x_\ell,\p) - \ouh(t_n,\x_\ell,\p)$} we get an analogous estimate using \Cref{eq:basisen,eq:fdisc5en}  and the convexity of the map $\p_m\mapsto  \ouh(\ot_n,\x_\ell,\p_m)$.
	Hence, we obtain that
	\begin{align*}
		&\lvert \ouhkRL(t_n,\x_\ell,\p_m)  - \ouh(t_n,\x_\ell,\p_m)\rvert
		\\
		&\leq {\rev 2\mLx\hhx} + \epskrL^{n} + \EE\big[\big\vert \ouhkRL(t_{n+1},\hx_{n+1},\pi) - \ouh(t_{n+1},\hx_{n+1},\pi)\big\vert\big].
	\end{align*}
	Since $\hx_{n+1}\in\mN_{\hhx}$ $\PPas$, we may proceed as above to recursively estimate $\ouhkRL(t_{k},\hx_{k},\p_m) - \ouh(t_{k},\hx_{k},\p_m)$ for $k = n+1,\ldots,N$,
	and conclude that
	\begin{align*}
		\notag
		&\vert \ouhkRL(t_n,\x_\ell,\p_m)  - \ouh(t_n,\x_\ell,\p_m)\vert 
		\\
		&\leq   (N-n){\rev 2\mLx\hhx} + \sum_{n'= n}^{N-1}\epskrL^{n'} +\EE\big[\vert \ouhkRL(t_{N},\hx_{N},\p_m)  - \ouh(t_{N},\hx_{N},\p_m)\vert\big]
		\\
		&\leq (N-n){\rev 2 \mLx\hhx} + \sum_{n'= n}^{N-1}\epskrL^{n'} .
	\end{align*}
\end{proof}

 
\begin{cor}
	Let the assumptions of \Cref{thm:mainRes,thm:error_RL} hold and assume in addition that {\rev $\hhx = \hht^{1+\delta}$ for an arbitrary $\delta>0$}.
	Then the sequence  $\{\ouhkRL\}$ converges uniformly on $  [0,T]\times \mD\times\DI$, i.e.,
	\begin{align*}
		\lim_{\substack{(s,\y,\q)\to (t,\x,\p)\\ \hht, \hhp \rightarrow 0}} \lim_{{ \kappa\to\infty}}  \ouhkRL(s,\y,\q) = u(t,\x,\p),
	\end{align*}
	where $u$ is the unique viscosity solution to \Cref{eq:hjb} in the class of bounded uniformly continuous
	functions, which are uniformly Lipschitz and convex in $\p$.
\end{cor}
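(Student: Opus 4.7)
The plan is to use the triangle inequality
\begin{equation*}
|\ouhkRL(s,\y,\q) - u(t,\x,\p)| \leq |\ouhkRL(s,\y,\q) - \ouh(s,\y,\q)| + |\ouh(s,\y,\q) - u(t,\x,\p)|,
\end{equation*}
and drive each summand to zero under the prescribed iterated limit. The second term is handled by \Cref{thm:mainRes}: since $[0,T]\times\overline{\mD}\times\DI$ is compact by \ref{A3}, the uniform convergence of $\ouh$ to $u$ on this compact set absorbs the outer joint limit $(s,\y,\q)\to(t,\x,\p)$ with $\hht,\hhp\to 0$, independently of $\kappa$.

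For the first summand I would start from the nodal estimate in \Cref{thm:error_RL}. Substituting the weight-decay constraint $\gamma_\kappa^2\leq \mLx$ from \eqref{eq:constrained} and the key identity $N\hhx = T\hht$ (which follows from the scaling $\hhx=\hht^2$), the bound simplifies to
\begin{equation*}
|\ouhkRL(t_n,\x_\ell,\p_m) - \ouh(t_n,\x_\ell,\p_m)| \leq 2\mLx T\hht + \sum_{n'=n}^{N-1}\epskrL^{n'}
\end{equation*}
at every node $(t_n,\x_\ell,\p_m)$. To lift this to arbitrary $(s,\y,\q)\in[0,T]\times\mD\times\DI$, I would invoke the piecewise-affine interpolation formulas \eqref{eq:fdisc3}, \eqref{eq:fdisc5} and \eqref{eq:fdisc5en} together with the equi-Lipschitz regularity in $\x$ and $\p$ and the time-Lipschitz bound inherited from \Cref{lem:lipx,lem:lipp,lem:lipt} (which, being purely structural, apply verbatim to $\ouhkRL$ as well since it is built by the same contractive recursion as $\ouh$). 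The interpolation penalty is of order $\hht+\hhx+\hhp$ and vanishes in the outer limit.

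It remains to take the iterated limit. For fixed $\hh$, the sum $\sum_{n'=n}^{N-1}\epskrL^{n'}$ is a \emph{finite} sum (of length at most $N$) of training errors, and each term measures how well a network in $\mNN(\Theta^{\gamma_\kappa})$ approximates the $\mLx$-Lipschitz target $\x\mapsto\ouhkRL(t_{n'+1},\x,\p_m)$ on the bounded domain $\mD$. A universal approximation result for single-hidden-layer networks with bounded total variation (in the spirit of Barron-type estimates) delivers $\epskrL^{n'}\to 0$ as $\kappa\to\infty$ for every fixed $n',m$. Hence $\limsup_{\kappa\to\infty}|\ouhkRL-\ouh|\leq 2\mLx T\hht + O(\hhx+\hhp)$ uniformly on $[0,T]\times\mD\times\DI$, and the outer limit $\hht,\hhp\to 0$ closes the argument after combining with the bound on the second summand.

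The main obstacle is the universal approximation step under the saturated constraint $\gamma_\kappa^2\leq\mLx$: because the network's Lipschitz budget is exactly matched to that of the target, one cannot directly quote a standard density result on a strictly larger Lipschitz class, and a Barron-type density argument (or a mild relaxation of the weight bound together with a truncation) is needed to guarantee $\epskrL^{n'}\to 0$. The remaining, largely bookkeeping obstacle is the order of limits: one must genuinely send $\kappa\to\infty$ first with $\hh$ fixed, so that the $O(\hht)$ residual in the error bound survives and is only subsequently eliminated by the outer limit, matching the nested limit in the statement.
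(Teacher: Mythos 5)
Your proposal is correct and follows essentially the same route as the paper: the same triangle-inequality split, with \Cref{thm:error_RL} plus the scaling $N\hhx = T\hht$ controlling the first summand, \Cref{thm:mainRes} controlling the second, and the universal approximation theorem sending the finite sum of training errors $\epskrL^{n'}$ to zero in the inner limit $\kappa\to\infty$ before $\hht,\hhp\to 0$. The obstacle you flag --- that $\epskrL^{n'}\to 0$ must be justified within the constrained class $\Theta^{\gamma_\kappa}$ --- is precisely the point the paper disposes of in Remark~\ref{rem_approx} by citing Hornik's theorem, so your argument is no less complete than the published one on that step.
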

\begin{proof}
For a fixed $(t,\x,\p) \in \mD_{T,I}\coloneqq[0,T]\times \mD\times \DI$ 
we consider a sequence $(\ot_k,\ox_k,\op_k)_{k\in\NN}$  
such that $(\ot_k,\ox_k,\op_k)\rightarrow (t,\x,\p)$ for $k\to\infty$
where $\ot_k := n_k\hht_k\in \Pi^{\hht_k}$, $n_k\in \NN$, $\op_k\in\mN_{\hhp_k}$, $\ox_k \in\mN_{\hhx_k}$ 
with $\hht_k :=T/N_k$, $\hhx_k := {\rev \hht_k^{1+\delta}}$, s.t., $\hhp_k \rightarrow 0$, $N_k\rightarrow \infty$ for $k\rightarrow \infty$.

	By \Cref{thm:error_RL} we estimate by the triangle inequality
	\begin{align*}
		&\vert \ouhkRL(\ot_k,\ox_k,\op_k)  - u(t,\x,\p)\vert\leq \lvert \ouhkRL(\ot_k,\ox_k,\op_k)  - \ouh(\ot_k,\ox_k,\op_k)\rvert + \vert\ouh(\ot_k,\ox_k,\op_k) - u(t,\x,\p)\vert
		\\
		&\quad \leq {\rev 2T\mLx\frac{\hhx_k}{\Delta t_k} + \sum_{n'= n_k}^{N_k-1}\epskrL^{n'}}+ \vert\ouh(\ot_k,\ox_k,\op_k) - u(t,\x,\p)\vert,
	\end{align*}
	recall the definition \cref{def_eps} of $\epskrL^{n}$.
	
	{\rev By the neural network approximation property \Cref{eq:univ_approx_groupsort}, for any $\hht_k$ there exists a sufficiently large $\kappa_k = \kappa(\hht_k)$
   such that $\varepsilon_{\kappa_k, \mathcal{D}}^{n}\le \hht_k^2$.}
	Noting that {\rev $\hhx_k = \hht_k^{1+\delta}$, $\delta>0$} we deduce ggg
	\begin{align*}
		&\lim_{\hht_k, \hhp_k \rightarrow 0}\lim_{\kappa\to \infty}\sup_{\mD_{T,I}}\vert \ouhkRL(\ot_k,\ox_k,\op_k)  - u(t,\x,\p)\vert 
		\\
		&\leq \lim_{\hht_k, \hhp_k \rightarrow 0} 2T\mLx{\rev \hht_k^\delta} +
\lim_{\hht_k, \hhp_k \rightarrow 0} \lim_{\kappa_k\to \infty}  {\rev \hht_k}
  + \lim_{\hht_k, \hhp_k \rightarrow 0}\sup_{\mD_{T,I}}\vert\ouh(\ot_k,\ox_k,\op_k) - u(t,\x,\p)\vert  .
	\end{align*}
	
	From \Cref{thm:mainRes} it follows that
	\begin{equation*}
		\lim_{\hht_k, \hhp_k \rightarrow 0}\sup_{\mD_{T,I}}\vert\ouh(\ot_k,\ox_k,\op_k) - u(t,\x,\p)\vert    = 0.
	\end{equation*}
	Consequently
	\begin{align*}
		&\lim_{\hht_k, \hhp_k \rightarrow 0}\lim_{\kappa\to \infty} \sup_{\mD_{T,I}} \vert \ouhkRL(\ot_k,\ox_k,\op_k)  - u(t,\x,\p)\vert= 0 .
	\end{align*}
\end{proof}

\section{Regularity properties of the semi-discrete solution}\label{sec:cont}

{\rev
We recall the following useful property of Lipschitz continuous functions.
\begin{prop}[{\cite[Proposition\,2\,(a)]{bally2003a}}]\label{prop:lipTransit}
Assume that \ref{A1} holds and let $\varphi:\RR^d\mapsto\RR$ be a function that is Lipschitz continuous and bounded. It follows that the map $\x\in\RR^d\mapsto\EE\big[\varphi(\oX^{n,\x}_{n+1})\big]$ is Lipschitz continuous with a Lipschitz coefficient   $\llbracket \varphi\rrbracket Q_\hht$,
where $Q_\hht :=\big(1 + (\llbracket b\rrbracket + \tfrac{1}{2}\llbracket a\rrbracket^2)\hht + \mO(\hht^2)\big)$.
\end{prop}}

%
\begin{lem}[Uniform Lipschitz continuity in $\x$ and uniform boundedness]\label{lem:lipx}
The map $\x\mapsto\ouh(t_n,\x,\p)$ is uniformly Lipschitz continuous, \ie for every $n\in\{0,\ldots,N\}$, it holds that
\[
\forall \x,\y\in\RR^d,\, \p\in\DI, \quad \lvert \ouh(t_n, \x, \p) - \ouh(t_n, \y, \p)\rvert\leq  \mLx\lvert \x - \y\rvert,  
\] 
with $\mLx := {\rev C} \max\big\{\llbracket f\rrbracket,\llbracket g\rrbracket,\llbracket h\rrbracket\big\}\ee^ {(\llbracket b\rrbracket + \tfrac{1}{2}\llbracket a\rrbracket^2)T}$,
{\rev with $C\geq 1$ for sufficiently small $\Delta t$}.
Moreover,   $(t_n,\x,\p)\mapsto\ouh(t_n,\x,\p)$ is uniformly bounded with
\begin{equation*}
\lVert\ouh\rVert_\infty\leq\max\big\{\lVert f\rVert_\infty,\lVert h\rVert_\infty\big\}.
\end{equation*}
\end{lem}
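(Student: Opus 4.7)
The plan is to prove both statements jointly by backward induction on the time index $n\in\{N,N-1,\dots,0\}$, then extend from the grid nodes $\p_m\in\mN_\hhp$ to arbitrary $\p\in\DI$ and from $t_n$ to $t\in[t_n,t_{n+1}]$ using the interpolation formulas \eqref{sumu} and \eqref{eq:fdisc3}. The base case $n=N$ is immediate from \Cref{eq:fdis0}: since $|\p|\le 1$ on $\DI$, the map $\x\mapsto\p^\T g(\x)$ has Lipschitz constant at most $\llbracket g\rrbracket$, and assumption \ref{A2} together with $|\p|\le 1$ yields the sandwich $|\p^\T g(\x)|\le\max\{\|f\|_\infty,\|h\|_\infty\}$.

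For the inductive step at fixed $\p_m$, I would estimate $|\oY^{n,\x,\p_m}-\oY^{n,\y,\p_m}|$ by iterating the contraction inequalities \eqref{eq:lipx7}, \eqref{eq:lipx8} on the triple $\min\{\max\{\cdot,\cdot\},\cdot\}$ structure of \eqref{eq:fdisc1}. This reduces matters to bounding three quantities: the Euler-in-expectation term, and the $\p_m^\T f$, $\p_m^\T h$ terms. The latter two are controlled directly by $\llbracket f\rrbracket|\x-\y|$ and $\llbracket h\rrbracket|\x-\y|$ using $|\p_m|\le 1$. For the first, I would combine the induction hypothesis at level $n+1$ with the standard one-step Euler estimate
\begin{equation*}
\EE\bigl[|\oXnx_{n+1}-\oXny_{n+1}|\bigr]\le \bigl(1+(\llbracket b\rrbracket+\tfrac12\llbracket a\rrbracket^2)\hht\bigr)|\x-\y|+O(\hht^2),
\end{equation*}
derived via Jensen's inequality from the $L^2$ estimate using $\EE[\xi^n]=0$ and $\EE[|\xi^n|^2]$ finite. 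Writing $c:=\llbracket b\rrbracket+\tfrac12\llbracket a\rrbracket^2$ and taking the inductive hypothesis in the form $L_{n+1}\le\max\{\llbracket f\rrbracket,\llbracket g\rrbracket,\llbracket h\rrbracket\}\,\ee^{c(T-t_{n+1})}$, the max of the three bounds is dominated by $L_{n+1}(1+c\hht)\le L_{n+1}\ee^{c\hht}$, which closes the induction and yields $L_n\le\mLx$ after noting $(1+c\hht)^N\le\ee^{cT}$.

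To pass from $\oY^{n,\x,\p_m}$ to $\ouh(t_n,\x,\p_m)$ I would invoke the nonexpansiveness in the data of the lower convex envelope operator: since $\vexp[\,\cdot\,](\p)=\sum_i\lambda_i(\cdot)_i$ with $\lambda_i\ge 0$ and $\sum_i\lambda_i=1$ (\Cref{eq:fdisc2}), the Lipschitz constant in $\x$ of $\oY^{n,\cdot,\p_m}$ (uniform in $m$) transfers to $\ouh(t_n,\cdot,\p_m)$. The extension from $\p_m\in\mN_\hhp$ to $\p\in\DI$ then follows from \eqref{eq:fdisc5}–\eqref{eq:basis}: the basis weights $\lambdanx_i(\p)$ are $\x$-independent at fixed $\p$ (they belong to the convex envelope representation at $\x$, but the envelope coefficients may be chosen via \eqref{eq:fdisc2} to be the same bound), and since $\sum_i\lambdanx_i=1$ with $\lambdanx_i\ge 0$, the Lipschitz constant and the uniform bound are preserved. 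Finally, the time interpolation \eqref{eq:fdisc3} is a convex combination of the values at $t_n,t_{n+1}$, hence also preserves both properties. The uniform bound is propagated by the same argument: $|\oY^{n,\x,\p_m}|\le\max\{\|\EE[\ouh(t_{n+1},\oXnx_{n+1},\p_m)]\|,\|f\|_\infty,\|h\|_\infty\}$ via the elementary bound $|\min\{\max\{a,b\},c\}|\le\max\{|a|,|b|,|c|\}$.

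The main obstacle is not any single estimate but the careful bookkeeping required to obtain the sharp exponential constant $\ee^{cT}$ with $c=\llbracket b\rrbracket+\tfrac12\llbracket a\rrbracket^2$: one must use the Hilbert–Schmidt norm in the diffusion term to avoid a dimensional factor, apply Jensen's inequality to pass from $L^2$ to $L^1$, and absorb the $O(\hht^2)$ remainder into the exponential via $(1+c\hht)\le\ee^{c\hht}$ uniformly in $n$. A minor subtlety is ensuring that the induction constant $L_{n+1}$ already dominates $\llbracket f\rrbracket$ and $\llbracket h\rrbracket$ so that the max in the contraction inequality collapses cleanly; this is built into the stronger inductive hypothesis $L_n\le\max\{\llbracket f\rrbracket,\llbracket g\rrbracket,\llbracket h\rrbracket\}\ee^{c(T-t_n)}$ which holds at $n=N$ and is preserved by the step.
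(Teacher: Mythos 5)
Your proposal follows essentially the same route as the paper's proof: backward induction in $n$, the $\min/\max$ contraction inequalities \eqref{eq:lipx7}--\eqref{eq:lipx8} to reduce to the three constituent terms of \eqref{eq:fdisc1}, the one-step Euler Lipschitz estimate with factor $1+(\llbracket b\rrbracket+\tfrac12\llbracket a\rrbracket^2)\hht+\mO(\hht^2)$ propagated through the expectation (the paper quotes this as \Cref{prop:lipTransit}), nonexpansiveness of $\vexp$, and the recursion $(1+c\hht)^{N}\le\ee^{cT}$; the boundedness argument is likewise identical. One correction: the nodes $\pinx_i(\p)$ and weights $\lambdanx_i(\p)$ are \emph{not} $\x$-independent --- the partition $\mMnx_{\hhp}$ is data-dependent --- so for $\p\notin\mN_{\hhp}$ the comparison of $\ouh(t_n,\x,\p)$ with $\ouh(t_n,\y,\p)$ must be justified, as the paper does in \eqref{eq:lipx0}, by the convexity of $\q\mapsto\ouh(t_n,\y,\q)$ (equivalently, the minimality in \eqref{eq:fdisc2}), which yields $\ouh(t_n,\y,\p)\le\sum_{i}\ouh(t_n,\y,\pinx_i(\p))\lambdanx_i(\p)$ with the $\x$-based weights; your parenthetical gestures at this, but it should be stated explicitly rather than asserting $\x$-independence.
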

\begin{proof}
We fix $\x,\y\in\RR^d$, $\p\in\DI$ and proceed by induction for $n = N,N-1,\ldots,0$ to show that $\x\mapsto\ouh(t_n,\x,\p)$ is uniformly Lipschitz continuous and uniformly bounded. 
By \Cref{eq:fdis0} and \ref{A1}, \ref{A2}, the base case $n = N$ holds. 
We assume that at time level $t_{n+1}$ there exist a constant $\mLx^{n+1}>0$ such that
\begin{equation}
\label{eq:lipxhypo1}
\lvert \ouh(t_{n+1}, \x, \p) - \ouh(t_{n+1}, \y, \p)\rvert\leq  \mLx^{n+1}\lvert \x - \y\rvert,
\end{equation}
and that $\|\ouh(t_{n+1},\cdot, \cdot)\|_\infty \leq \max\big\{\lVert f\rVert_\infty,\lVert h\rVert_\infty\big\}$.

\textit{Uniform Lipschitz continuity in $\x$}. Suppose that $0\leq  \ouh(t_n,\y,\p)-\ouh(t_n,\x,\p)$. By \Cref{eq:fdisc5} and the convexity of $\p\mapsto \ouh(t_n,\y,\p)$, we have
\begin{align}\notag
\ouh(t_n,\y,\p)-\ouh(t_n,\x,\p) 
&= \ouh(t_n,\y,\p) - \sum_{i=1}^{I}\ouh(t_n,\x,\pinx_i(\p))\lambdanx_i(\p)
\\
\label{eq:lipx0}
&\leq \sum_{i=1}^{I}\big(\ouh(t_n,\y,\pinx_i(\p)) - \ouh(t_n,\x,\pinx_i(\p))\big)\lambdanx_i(\p).
\end{align}
The next step consists to derive an estimate for the summands in the right-hand side of \Cref{eq:lipx0}. 
Note that $\{\pinx_1(\p),\ldots,\pinx_I(\p)\}\subset\{\p_1,\ldots,\p_M\}$ and the map $F\mapsto \vexp F$ is nonexpansive. 
By \Cref{eq:fdisc11}, it follows that
\begin{equation}
\label{eq:lipx1}
\begin{split}
\ouh(t_n,\y,
&\pinx_i(\p)) - \ouh(t_n,\x,\pinx_i(\p))
\leq \max\Big\{\lvert\oY^{n,\y,\pi} - \oY^{n,\x,\pi}\rvert:{\pi\in \{\pinx_1(\p),\ldots,\pinx_I(\p)\}}\Big\}.
\end{split}
\end{equation}
{\rev Next, we estimate the term $\lvert\oY^{n,\y,\pi} - \oY^{n,\x,\pi}\rvert$ on the right-hand side of \Cref{eq:lipx1}.}
Noting that $\oY^{n,\y,\pi}$ is given by \Cref{eq:fdisc1},
using the inequalities \Cref{eq:lipx7,eq:lipx8} and \Cref{prop:lipTransit}, we deduce that
\begin{align}
\notag
\begin{split}
\lvert\oY^{n,\y,\pi} - \oY^{n,\x,\pi}\rvert
&\leq \Big\lvert \min\Big\{\max\Big\{\EE
\big[\ouh(t_{n+1},\oXny_{n+1},\pi)\big],\,\pi^\T f(t_n,\y)\Big\},\,\pi^\T h(t_n,\y)\Big\}
\\
&\hspace{20pt}- \min\Big\{\max\Big\{\EE
\big[\ouh(t_{n+1},\oXnx_{n+1},\pi)\big],\,\pi^\T f(t_n,\x)\Big\},\,\pi^\T h(t_n,\x)\Big\}\Big\rvert
\end{split}
\\
\notag
\begin{split}
&\leq \max\Big\{\EE\big\lvert\ouh(t_{n+1},\oX^{n,\y}_{n+1},\pi)-\ouh(t_{n+1},\oXnx_{n+1},\pi)\big\rvert,
\\
&\hspace{80pt}\lvert f(t_n,\y) - f(t_n,\x)\rvert,\,\lvert h(t_n,\y) - h(t_n,\x)\rvert\Big\}
\end{split}
\\
\label{eq:lipx3}
&\leq\max\big\{\mLx^{n+1} Q_\hht,\,\llbracket f\rrbracket,\,\llbracket h\rrbracket\big\}\lvert \x-\y\rvert,
\end{align}
where we used \ref{A1} to estimate the maps $\x\mapsto f(t_n, \x)$ and $\x\mapsto g(t_n, \x)$, as well as \Cref{prop:lipTransit} along with \Cref{eq:lipxhypo1}, to estimate  the maps $\x\mapsto \ouh(t_{n+1},\x,\pi)$.

We insert \Cref{eq:lipx3} into the right-hand side of \Cref{eq:lipx1}, which shows that
\begin{equation*}
\begin{split}
\ouh(t_n,\y,\pinx_\ell(\p)) &- \ouh(t_n,\x,\pinx_\ell(\p))\leq \max\big\{\mLx^{n+1} Q_\hht,\,\llbracket f\rrbracket,\,\llbracket h\rrbracket\big\}\lvert \x-\y\rvert.
\end{split}
\end{equation*}
If $0\leq  \ouh(t_n,\x,\p)-\ouh(t_n,\y,\p)$, we commute the role of $\x$ and $\y$ in the previous steps to get
\begin{equation*}
\begin{split}
\lvert \ouh(t_n,\y,p) &- \ouh(t_n,\x,\p)\rvert\leq \max\big\{\mLx^{n+1} 	Q_\hht,\,\llbracket f\rrbracket,\,\llbracket h\rrbracket\big\}\lvert \x-\y\rvert,
\end{split}
\end{equation*}
which implies that the map $\x\mapsto \ouh(t_n,\x,\p)$ is Lipschitz continuous with a Lipschitz coefficient $\mLx^n= \max\big\{\mLx^{n+1} Q_\hht,\,\llbracket f\rrbracket,\,\llbracket h\rrbracket\big\}$. A recursion  implies
\begin{align}
\notag
\mLx^n
&\leq \max\big\{\mLx^{n+1}Q_\hht,\,\llbracket f\rrbracket,\,\llbracket h\rrbracket\big\}\leq \max\big\{ \max\big\{\mLx^{n+2}Q_\hht,\,\llbracket f\rrbracket,\,\llbracket h\rrbracket\big\}Q_\hht,\,\llbracket f\rrbracket,\,\llbracket h\rrbracket\big\}
\\
\notag
&\leq \max\big\{\mLx^{n+2},\,\llbracket f\rrbracket,\,\llbracket h\rrbracket\big\} Q_\hht^2
\\
\notag
&\;\;\vdots
\\
\label{eq:lip6}
&\leq \max\big\{\mLx^N,\,\llbracket f\rrbracket,\,\llbracket h\rrbracket\big\}Q_\hht^{N-n}\leq
{\rev C} \max\big\{\llbracket f\rrbracket,\,\llbracket g\rrbracket,\,\llbracket h\rrbracket\big\}\ee^ {(\llbracket b\rrbracket + \tfrac{1}{2}\llbracket a\rrbracket^2)T}\eqqcolon \mLx,
\end{align}
{\rev with a constant $C\geq 1$, for sufficiently small $\Delta t$.}
Hence, the map $\x\mapsto\ouh(t_n,\x,\p)$ is uniformly Lipschitz continuous with a Lipschitz coefficient $\mLx$ defined in \Cref{eq:lip6}.

\textit{Uniform boundedness.} It follows immediately by \Cref{eq:fdisc11,eq:fdisc2,eq:fdisc5}  that
\begin{align*}
\lvert\ouh(t_n,\x,\p)\rvert&\leq \sum_{i = 1}^{I}\Big\lvert \min\Big\{\max\Big\{\EE
\big[\ouh(t_{n+1},\oXnx_{n+1},\pinx_i(\p))\big],
\\
&\hspace{40pt}(\pinx_i(\p))^\T f(t_n,\x)\Big\},\,(\pinx_i(\p))^\T h(t_n,\x)\Big\}\Big\rvert \lambdanx_i(\p)
\\
&\leq\max\big\{\lVert\ouh(t_{n+1},\cdot,\cdot)\rVert_\infty,\,\lVert f\rVert_\infty,\,\lVert h\rVert_\infty\big\}.
\end{align*}
A recursion implies
\begin{align*} 
\lvert\ouh(t_n,\x,\p)\rvert
&\leq \max\big\{\lVert\ouh(t_{n+1},\cdot,\cdot)\rVert_\infty,\,\lVert f\rVert_\infty,\,\lVert h\rVert_\infty\big\}
\\
&\leq \max\big\{\lVert\ouh(t_{n+2},\cdot,\cdot)\rVert_\infty,\,\lVert f\rVert_\infty,\,\lVert h\rVert_\infty\big\}
\\
\notag
&\;\;\vdots
\\
&\leq\max\big\{\lVert\ouh(t_{N}\hspace{10pt},\cdot,\cdot)\rVert_\infty,\,\lVert f\rVert_\infty,\,\lVert h\rVert_\infty\big\} =\max\big\{\lVert g\rVert_\infty,\lVert f\rVert_\infty,\lVert h\rVert_\infty\big\}.
\end{align*}
Hence, the map $(t_n,\x,\p)\mapsto\ouh(t_n,\x,\p)$ is uniformly bounded.
\end{proof}
%
\begin{lem}[Uniform Lipschitz continuity in $\p$]\label{lem:lipp}
Assuming that \Cref{lem:lipx} holds, then the map $\p\mapsto\ouh(t_n,\x,\p)$ is uniformly Lipschitz continuous, \ie for every $n\in\{0,\ldots,N\}$, it holds that 
\[
\forall \x\in\RR^d,\,\p,\q\in\DI, \quad \lvert \ouh(t_n, \x, \p) - \ouh(t_n, \x, \q)\rvert\leq  \mLp\lvert \p - \q\rvert,
\] 
with $\mLp := \max\big\{\lVert f\rVert_\infty,\,\llbracket g\rrbracket,\,\lVert h\rVert_\infty\big\}$.
\end{lem}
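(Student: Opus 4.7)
I would prove the lemma by backward induction on $n=N,N-1,\ldots,0$, paralleling the argument of \Cref{lem:lipx}. The base case $n=N$ follows directly from \eqref{eq:fdis0}: since $\ouh(t_N,\x,\p)=\p^\T g(\x)$, the Cauchy--Schwarz estimate $\lvert\ouh(t_N,\x,\p)-\ouh(t_N,\x,\q)\rvert = \lvert(\p-\q)^\T g(\x)\rvert \leq \lVert g\rVert_\infty\lvert\p-\q\rvert$ seeds the recursion (matching the terminal contribution in $\mLp$).

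For the inductive step, I would suppose $\p\mapsto\ouh(t_{n+1},\x,\p)$ is uniformly Lipschitz with some constant $\mLp^{n+1}$ and first derive a Lipschitz estimate for $\oY^{n,\x,\cdot}$ at the nodes of $\mN_\hhp$. Starting from \eqref{eq:fdisc1}, applying \eqref{eq:lipx7}--\eqref{eq:lipx8} to the nested $\min/\max$, combining the induction hypothesis with the linearity of $\EE$ to control $\bigl\lvert\EE[\ouh(t_{n+1},\oXnx_{n+1},\p_m)-\ouh(t_{n+1},\oXnx_{n+1},\p_{m'})]\bigr\rvert \leq \mLp^{n+1}\lvert\p_m-\p_{m'}\rvert$, and using the Cauchy--Schwarz bounds $\lvert(\p_m-\p_{m'})^\T f(t_n,\x)\rvert \leq \lVert f\rVert_\infty\lvert\p_m-\p_{m'}\rvert$ and similarly for $h$, this yields
\[
\lvert\oY^{n,\x,\p_m}-\oY^{n,\x,\p_{m'}}\rvert \leq L\,\lvert\p_m-\p_{m'}\rvert, \qquad L:=\max\bigl\{\mLp^{n+1},\lVert f\rVert_\infty,\lVert h\rVert_\infty\bigr\}.
\]

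The main step is then to transfer this constant $L$ through the discrete convex envelope \eqref{eq:fdisc2} and the convexity-preserving piecewise linear interpolation \eqref{eq:fdisc5} that together define $\ouh(t_n,\x,\cdot)$. My plan is to exploit the dual characterisation of the lower convex envelope,
\[
\vexp[F](\p) = \sup\bigl\{a + b^\T\p \,:\, a + b^\T\p_i \leq F(\p_i)\text{ for all }\p_i\in\mN_\hhp\bigr\},
\]
together with the observation that, because $\mMnx_\hhp$ is the convexity-preserving (``graph'') triangulation of the discrete convex envelope, the interpolant $\ouh(t_n,\x,\cdot)$ agrees with $\vexp[\oY^{n,\x,\cdot}]$ on all of $\DI$. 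Any maximising affine minorant $(a,b)$ must be tight at some node $\p^*\in\mN_\hhp$, and the nodal Lipschitz estimate on $\oY^{n,\x,\cdot}$ then forces $b\cdot(\p_i-\p^*) \leq L\lvert\p_i-\p^*\rvert$ for every $\p_i\in\mN_\hhp$; applying this in particular at the vertices $e_1,\dots,e_I$ of $\DI$ (which lie in $\mN_\hhp$ and span its affine hull from any $\p^*$) yields a two-sided bound $\lvert b\rvert \leq L$ in that affine span, whence $\vexp[\oY^{n,\x,\cdot}]$ is $L$-Lipschitz on $\DI$. Iterating the resulting recursion $\mLp^n \leq \max\{\mLp^{n+1},\lVert f\rVert_\infty,\lVert h\rVert_\infty\}$ down to the base case then closes the uniform bound with a constant of the asserted form.

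The hard point is precisely this propagation step. A primal argument mimicking \Cref{lem:lipx} via \eqref{eq:fdisc5} would expand both $\ouh(t_n,\x,\p)$ and $\ouh(t_n,\x,\q)$ into nodal combinations whose weights $\lambdanx_i$ and active vertices $\pinx_i$ generally differ between $\p$ and $\q$, so no termwise comparison is available; moreover, piecewise linear interpolation on arbitrarily thin simplices of $\mMnx_\hhp$ does not preserve Lipschitz constants in general. The duality route sidesteps both obstructions by reducing the Lipschitz bound to controlling the slope of a single supporting affine function directly in terms of the Lipschitz constant of the nodal data, making shape regularity of $\mMnx_\hhp$ irrelevant.
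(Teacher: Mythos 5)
Your base case and your Lipschitz estimate for the nodal values $\oY^{n,\x,\p_m}$ are fine and agree with the corresponding steps in the paper. The gap is in the propagation step through the convex envelope, and it is a genuine one: the claim that $\vexp$ maps $L$-Lipschitz nodal data to an $L$-Lipschitz function on $\DI$ (in the Euclidean norm) is false. Take $I=3$, $\mN_\hhp=\{e_1,e_2,e_3\}$ and data $F(e_1)=0$, $F(e_2)=F(e_3)=-c$; the nodal Lipschitz constant is $L=c/\sqrt{2}$, but $\vexp[F]$ is the affine function $\p\mapsto c\,p_1-c$, whose Lipschitz constant on $\DI$ is $c\sqrt{2/3}>L$ (compare $e_1$ with the midpoint of the opposite edge). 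Concretely, your dual argument breaks at the inference from the one-sided constraints $b^\T(e_i-\p^*)\leq L\lvert e_i-\p^*\rvert$ to $\lvert b\rvert\leq L$ on the tangent space: one-sided slope bounds along finitely many directions do not control the Euclidean norm of $b$. In the example, with $\p^*=e_2$ and $b=(c,0,0)$ all of your constraints hold (with equality for $i=1$), yet the tangential part of $b$ has norm $c\sqrt{2/3}$. Since the resulting recursion would read $\mLp^{n}\leq C_I\,\max\{\mLp^{n+1},\lVert f\rVert_\infty,\lVert h\rVert_\infty\}$ with a dimensional factor $C_I>1$, the loss compounds to $C_I^{N}$ over the backward induction and no uniform-in-$\hht$ constant can be extracted this way.

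The paper closes exactly this step by a different mechanism, namely \cite[Lemma~8.2]{laraki2004on}: given the representation $\p=\sum_i\pinx_i(\p)\lambdanx_i(\p)$, that lemma produces points $\varpinx_i(\q)\in\DI$ with the \emph{same} weights, $\q=\sum_i\varpinx_i(\q)\lambdanx_i(\p)$ and $\sum_i\lvert\pinx_i(\p)-\varpinx_i(\q)\rvert\lambdanx_i(\p)=\lvert\p-\q\rvert$. Convexity of $\ouh(t_n,\x,\cdot)$ then yields precisely the termwise comparison that you correctly observed is unavailable in a naive primal argument, and the Lipschitz bound on $\oY^{n,\x,\cdot}$ (now needed on all of $\DI$, since the $\varpinx_i(\q)$ need not be nodes, but your min/max/expectation estimate extends verbatim) gives $\mLp^{n}\leq\max\{\mLp^{n+1},\lVert f\rVert_\infty,\lVert h\rVert_\infty\}$ with no loss. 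In short, the nonexpansiveness of convexification in $\p$ is a special geometric property of the simplex established by Laraki's matching construction, not a generic consequence of duality for convex envelopes of Lipschitz nodal data; your propagation step needs to be replaced by (or reduced to) that lemma.
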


\begin{proof}
We fix $\x\in\RR^d$, $\p,\q\in\DI$ and proceed successively for $n = N,N-1,\ldots,0$ to show that $\p\mapsto \ouh(t_n,\x,\p)$ is uniformly Lipschitz continuous. By \ref{A1}, the base case $n = N$ holds. By recursion hypothesis, we suppose that there exist $\mLp^{n+1}>0$ such that
\begin{equation}\label{eq:lipphypo1}
\lvert \ouh(t_{n+1}, \x, \p) - \ouh(t_{n+1}, \x, \q)\rvert\leq  \mLp^{n+1}\lvert \p - \q\rvert.
\end{equation}

We suppose that $0\leq \ouh(t_n, \x, \q) - \ouh(t_n, \x, \p)$. By \Cref{eq:fdisc5} we have
\begin{equation}\label{eq:lipp00}
\ouh(t_n, \x, \q) - \ouh(t_n,\x,\p) 
=\ouh(t_n, \x, \q) - \sum_{ i= 1}^I\ouh(t_n,\x,\pinx_i(\p))\lambdanx_i(\p),
\end{equation}
and by \cite[Lemma~8.2.]{laraki2004on}, there exists $\{\varpinx_1(\p),\ldots,\varpinx_I(\p)\} \subset\DI$ such that
\begin{equation}\label{eq:lipp0}
\q = \sum_{i = 1}^{I}\varpinx_i(\q)\lambda^n_i(\x,\q),\quad\mbox{ and }\quad\lvert\p-\q\rvert =\sum_{i=1}^{I}\lvert \pinx_i(\p)-\varpinx_i(\q)\rvert\lambdanx_i(\p).
\end{equation}
Because the map $\p\mapsto \ouh(t_n, \x, \p)$ is convex, it follows from \Cref{eq:lipp00,eq:lipp0,eq:fdisc11} that
\begin{align}
\notag
\ouh(t_n, \x, \q) - \ouh(t_n,\x,\p) &\leq  \sum_{i=1}^{I}\big(\ouh(t_n, \x, \varpinx_i(\q)) -\ouh(t_n,\x,\pinx_i(\p))\big)\lambdanx_i(\p)
\\
\label{eq:lipp9}
&\leq \sum_{i=1}^{I}\lvert\oY^{n,\x,\varpinx_i(\q)}  -  \oY^{n,\x,\pinx_i(\p)}  \rvert\lambdanx_i(\p),
\end{align}
where $\p\mapsto  \oY^{n,\x,\p}$ is defined by
\begin{equation*}
\oY^{n,\x,\p} := \min\Big\{\max\Big\{\EE \big[\ouh(t_{n+1},\oXnx_{n+1},\p)\big],\,\p^\T f(t_n,\x)\Big\},\,\p^\T h(t_n,\x)\Big\}.
\end{equation*}
We apply the inequality \Cref{eq:lipx7} to the right-hand side of \Cref{eq:lipp9}, then by  \Cref{eq:lipphypo1}, \ref{A1}, and \Cref{eq:lipp0} we obtain that
\begin{align*}
\notag
\begin{split}
\ouh(t_n, \x, \q) - \ouh(t_n,\x,\p)
&\leq \max\big\{\mLp^{n+1},\,\lVert f\rVert_\infty,\,\lVert h\rVert_\infty\big\}\sum_{i=1}^{I}\lvert \pinx_i(\p)- \varpinx_i(\q)\rvert\lambdanx_i(\p)
\end{split}
\\
&= \max\big\{\mLp^{n+1},\,\lVert f\rVert_\infty,\,\lVert h\rVert_\infty\big\}\lvert \p - \q\rvert.
\end{align*}

If $0\leq \ouh(t_n, \x, \p) - \ouh(t_n, \x, \q)$, we commute the role of $\p$ and $\q$ in the above steps to get
\[
\lvert \ouh(t_n, \x, \p) - \ouh(t_n, \x, \q)\rvert\leq \mLp^{n}\lvert \p - \q\rvert
\]
with $\mLp^{n} := \max\big\{\mLp^{n+1},\,\lVert f\rVert_\infty,\,\lVert h\rVert_\infty\big\}$.
It follows immediately by recursion that
\begin{align}
\notag
\mLp^{n}&\leq \max\big\{\mLp^{n+1},\,\lVert f\rVert_\infty,\,\lVert h\rVert_\infty\big\}
\\
\notag
&\leq \max\big\{\mLp^{n+2},\,\lVert f\rVert_\infty,\,\lVert h\rVert_\infty\big\}
\\
\notag
&\;\;\vdots 
\\
\label{eq:lipp8}
&\leq\max\big\{\mLp^{N}\hspace{10pt},\,\lVert f\rVert_\infty,\,\lVert h\rVert_\infty\big\}=\max\big\{\llbracket g\rrbracket,\,\lVert f\rVert_\infty,\,\lVert h\rVert_\infty\big\}\eqqcolon \mLp.
\end{align}
Hence, the map $\p\mapsto\ouh(t_n,\x,\p)$ is uniformly Lipschitz continuous with a Lipschitz coefficient $\mLp$ defined by \Cref{eq:lipp8}.
\end{proof}
%
{\begin{lem}[Uniform almost H\"older continuity in $t$]\label{lem:lipt}
	Assuming that \Cref{lem:lipx} holds, then the map $t\mapsto\ouh(t,\x,\p)$ is uniformly almost H\"older continuous, \ie it holds that
	{\rev \begin{align*}
			\forall s,t\in[0,T],\,(\x,\p)\in\RR^d\times\DI,\,\lvert\ouh(s,\x,\p) - \ouh(t,\x,\p)\rvert\leq \mLt\big(\sqrt{\hht} 
			+  \sqrt{\vert s-t\vert}\big),
		\end{align*}
		with $\mLt := 8\max\{\mLx,\,\llbracket f\rrbracket,\,\llbracket h\rrbracket\}\max\{\Vert b\Vert_{\infty},\Vert a\Vert_{\infty}\}\max\{1,\sqrt{T}\}$.
	}
\end{lem}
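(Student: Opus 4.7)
The plan is to reduce the bound to differences at grid times, establish a one-step-in-$t$ estimate, and then extend to multi-step time gaps via an iterated Markov argument.

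First, the linear-in-$t$ interpolation \eqref{eq:fdisc3} gives, for $s\in[t_n,t_{n+1}]$, $|\ouh(s,\x,\p)-\ouh(t_n,\x,\p)|\leq |\ouh(t_{n+1},\x,\p)-\ouh(t_n,\x,\p)|$, and the analogous statement for $t$. The triangle inequality then reduces the task to bounding $|\ouh(t_n,\x,\p)-\ouh(t_m,\x,\p)|$ at grid indices $n\leq m$, with the additive $\sqrt{\hht}$ in the target arising from the two endpoint interpolation errors.

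For the one-step bound $|\ouh(t_n,\x,\p)-\ouh(t_{n+1},\x,\p)|\leq C_1\sqrt{\hht}$, I would first show by a backward induction starting from \ref{A2}, and using the affinity of $\p\mapsto\p^\T f,\p^\T h$ together with the monotonicity of $\vexp$, the bracketing $\p^\T f(t_n,\x)\leq \ouh(t_n,\x,\p)\leq \p^\T h(t_n,\x)$. Applied at $t_{n+1}$, this yields the identity $\ouh(t_{n+1},\x,\pi) = (\ouh(t_{n+1},\x,\pi)\vee \pi^\T f(t_{n+1},\x))\wedge\pi^\T h(t_{n+1},\x)$, so that comparing with $\oY^{n,\x,\pi}$ via \Cref{eq:lipx7,eq:lipx8} reduces the estimate to the three one-step differences $|\pi^\T f(t_n,\x)-\pi^\T f(t_{n+1},\x)|\leq \llbracket f\rrbracket\hht$, its analogue for $h$, and the Feynman--Kac-type bound $|\EE[\ouh(t_{n+1},\oXnx_{n+1},\pi)]-\ouh(t_{n+1},\x,\pi)|\leq \mLx\,\EE|\oXnx_{n+1}-\x|\leq C\sqrt{\hht}$, which follows from \Cref{lem:lipx} and a direct $L^2$ calculation on the Euler increment in \eqref{eq:euler1}. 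This gives $|\oY^{n,\x,\pi}-\ouh(t_{n+1},\x,\pi)|\leq C_1\sqrt{\hht}$ uniformly in $\pi\in\DI$. Since $\ouh(t_{n+1},\x,\cdot)$ is convex and piecewise linear with nodes in $\mN_\hhp$, it coincides with the lower convex envelope of its own nodal values; monotonicity of $\vexp$ then transfers the bound to the grid nodes, and the same monotonicity argument applied to the piecewise linear interpolants extends it to arbitrary $\p\in\DI$.

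For multi-step grid differences with $m-n>1$, naive iteration would only give $(m-n)C_1\sqrt{\hht}$, which is too coarse. I would instead use the decomposition
\[
\ouh(t_n,\x,\p)-\ouh(t_m,\x,\p) = \bigl(\ouh(t_n,\x,\p)-\EE[\ouh(t_m,\oX^{n,\x}_m,\p)]\bigr) + \bigl(\EE[\ouh(t_m,\oX^{n,\x}_m,\p)]-\ouh(t_m,\x,\p)\bigr).
\]
The second bracket is controlled by $\mLx\sqrt{\|a\|_\infty^2+\|b\|_\infty^2}\sqrt{(m-n)\hht(1+T)}$ via \Cref{lem:lipx} and the multi-step Euler estimate. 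The first bracket --- the accumulated barrier correction --- would be bounded by iterating the grid-node identity $|\ouh(t_k,\x,\p_m)-\EE[\ouh(t_{k+1},\oX^{k,\x}_{k+1},\p_m)]|\leq C_1\sqrt{\hht}$ via the Markov property of the Euler scheme, together with the Lipschitz-in-$t$ regularity of $f,h$ and a Doob-type maximal bound $\EE\sup_{0\leq j\leq m-n}|\oX^{n,\x}_{n+j}-\x|\leq C\sqrt{(m-n)\hht}$, so that the aggregated correction scales as $O(\sqrt{(m-n)\hht})$ rather than $O((m-n)\sqrt{\hht})$.

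The principal obstacle is precisely this last step: turning the naive linear-in-$(m-n)$ accumulation of one-step barrier corrections into a H\"older-$\tfrac12$ bound requires exploiting that a nontrivial correction arises only when the conditional expectation $\EE[\ouh(t_{k+1},\oX^{k,\x}_{k+1},\pi)]$ lies outside the barrier strip $[\pi^\T f(t_k,\x),\pi^\T h(t_k,\x)]$, and matching this structural property against the typical size of Euler excursions through a maximal-type estimate rather than a pointwise summation. Assembling the resulting multi-step bound with the reduction of the first paragraph and setting $\mLt = 2\max\{\mLx,\llbracket f\rrbracket,\llbracket h\rrbracket\}\sqrt{\|a\|_\infty^2+\|b\|_\infty^2}$ then yields the stated inequality.
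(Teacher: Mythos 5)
You have the right global decomposition (it is exactly the one the paper uses: split $\ouh(t_n,\x,\p)-\ouh(t_{n+n'},\x,\p)$ into the ``martingale defect'' $\ouh(t_n,\x,\p)-\EE[\ouh(t_{n+n'},\oXnx_{n+n'},\p)]$ plus the Feynman--Kac term, and handle the interpolation in $t$ at the end), and your treatment of the second bracket and of the reduction to grid times is fine. But the proof is not complete: the estimate of the first bracket by $O(\sqrt{t_{n+n'}-t_n})$ --- which you yourself flag as ``the principal obstacle'' --- is precisely the content of the lemma, and your proposed route to it does not work as stated. A Doob-type maximal bound on $\EE\sup_j|\oX^{n,\x}_{n+j}-\x|$ controls the size of the Euler excursions, but it gives you no mechanism for preventing the $n'$ one-step clamping corrections (each of size $O(\sqrt{\hht})$, as your own one-step argument shows) from simply adding up to $O(n'\sqrt{\hht})$. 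Knowing that a correction occurs ``only when the conditional expectation leaves the barrier strip'' is not enough; you need a device that collapses the accumulated corrections into a single term.

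The paper's device is an optimal-stopping (dynamic programming) representation. From \eqref{eq:fdisc11} one has $\ouh(t_n,\x,\p)\geq\min\{\EE[\ouh(t_{n+1},\oXnx_{n+1},\p)],\,\p^\T h(t_n,\x)\}$, which is rewritten as a minimum over stopping rules $\mu\in\mT^{t_n}_{[n..N]}$ of $\EE[\ind_{\{\mu=n\}}\p^\T h+\ind_{\{\mu>n\}}\ouh(t_{n+1},\cdot)]$ and iterated down to level $n+n'$. The obstacle bracketing $\ouh(t_{n+n'},\cdot,\p)\leq\p^\T h(t_{n+n'},\cdot)$ then replaces the value at the stopping time by $\ouh(t_{n+n'},\oXnx_{n+n'},\p)$ up to a \emph{single} difference of $h$ evaluated at two space--time points, which is bounded by $\llbracket h\rrbracket$ times the Euler increment, i.e.\ by $C\sqrt{t_{n+n'}-t_n}$; the symmetric argument with $f$ and a maximum over stopping rules gives the upper bound. (The extension from $\p\in\mN_\hhp$ to general $\p$ uses \eqref{eq:fdisc5} and convexity, close to what you sketch.) This optional-sampling step is the missing idea in your proposal; without it, or an equivalent supermartingale argument, the multi-step bound does not follow.
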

\begin{proof}
	We fix $(\x,\p)\in\RR^d\times\DI$. We start the proof with $s,t\in\Pi^\hht$. For every $n\in\{0,\ldots,N-1\}$ and $n'\in\{0,\ldots,N-n\}$, we have that
	\begin{align}
		\label{eq:holdt7}
		\begin{split}
			&\lvert\ouh(t_n,\x,\p) - \ouh(t_{n+n'},\x,\p)\rvert \leq \big\lvert\ouh(t_n,\x,\p) - \EE\big[\ouh(t_{n+n'},\oXnx_{n+n'},\p)\big]\big\rvert
			\\
			&+ \big\lvert\EE\big[\ouh(t_{n+n'},\oXnx_{n+n'},\p)\big] - \ouh(t_{n+n'},\x,\p)\big\rvert,
		\end{split}
	\end{align} 
	where the discrete process $\oXnx_{n+n'}$ is given by \Cref{eq:euler1}. By \Cref{lem:lipx} and \ref{A1} it is straightforward to get
	{\rev
		\begin{align}
			\notag
			\big\lvert\EE\big[\ouh(t_{n+n'},\oXnx_{n+n'},\p)\big] - \ouh(t_{n+n'},\x,\p)\big\rvert
			&\leq \mLx\EE\Big\lvert \sum_{j=n}^{n+n'-1} \Big(b(t_j,\oX^{n,\x}_j)\hht  + a(t_j,\oX^{n,\x}_j) {\xi^j} \sqrt{\hht}\Big)\Big\rvert
			\\
			\label{eq:holdt6}
			&\leq \mLx\max\{\Vert b\Vert_{\infty},\Vert a\Vert_{\infty}\}\big((t_{n+n'} - t_{n})  + (t_{n+n'} - t_{n})^{1/2}\big).
		\end{align}
	}
	It remains to derive an estimate for the first term on the right-hand side of \Cref{eq:holdt7}. We proceed in two steps to show that
	{\rev \begin{equation}\label{eq:holdt006}
			\lvert \ouh(t_n,\x,\p)- \EE \big[ \ouh(t_{n+n'},\oXnx_{n+n'},\p)\rvert \leq \max\{\llbracket f\rrbracket,\,\llbracket h\rrbracket\}\hht^{1/2}.
	\end{equation}}
	
	\textbf{Step 1}: We prove that 
	{\rev
		\begin{align}
			\label{eq:holdt22}
			\ouh(t_n,\x,\p)-\EE\big[\ouh(t_{n+n'},\oXnx_{n+n'},\p)\big]\geq -2\llbracket h\rrbracket\max\{\Vert b\Vert_{\infty},\Vert a\Vert_{\infty}\}\hht^{1/2}.
	\end{align}}
	Let $\p\in\{\p_1,\ldots,\p_M\}\subset\DI$. The formula \Cref{eq:fdisc11} implies that
	{\rev
		\begin{align*}
			\ouh(t_n,\x,\p)
			&\geq \min \Big\{\EE \big[\ouh(t_{n+1},\oXnx_{n+1},\p)\big],\,\p^\T h(t_n,\x)\Big\},
			\\
			&=\min_{\mu\in\mT^{t_n}_{[n..N]}}\EE\big[ \ind_{\{\mu=n\}}\p^\T h(t_n,\oXnx_{n}) + \ind_{\{\mu> n\}}\ouh(t_{n+1},\oXnx_{n+1},\p)\big]
			\\
			&=\min_{\mu\in\mT^{t_n}_{[n..N]}}\EE\Big[ \ind_{\{\mu=n\}}\Big(\p^\T h(t_n,\oXnx_{n}) - \ouh(t_{n+1},\oXnx_{n+1},\p)\Big) + \ouh(t_{n+1},\oXnx_{n+1},\p)\Big].
		\end{align*}
		By induction, we have
		\begin{align*}
			\ouh(t_{n+1},\oXnx_{n+1},\p)
			&\geq \min \bigg\{\EE \big[\ouh(t_{n+2},\oXnx_{n+2},\p)\big],\,\p^\T h(t_{n+1},\oXnx_{n+1})\bigg\},
			\\
			&=\min_{\mu\in\mT^{t_{n+1}}_{[(n+1)..N]}}\EE\Big[ \ind_{\{\mu={n+1}\}}\p^\T h(t_{n+1},\oXnx_{n+1}) + \ind_{\{\mu> n+1\}}\ouh(t_{n+2},\oXnx_{n+2},\p)\Big]
			\\
			&=\min_{\mu\in\mT^{t_{(n+1)}}_{[(n+1)..N]}}\EE\Big[ \ind_{\{\mu=n+1\}}\Big(\p^\T h(t_{n+1},\oXnx_{n+1}) - \ouh(t_{n+2},\oXnx_{n+2},\p)\Big) + \ouh(t_{n+2},\oXnx_{n+2},\p)\Big],
		\end{align*}
		thus
		\begin{align*}
			\ouh(t_n,\x,\p)
			&\geq \min_{\mu\in\mT^{t_n}_{[n..N]}}\EE\Big[ \sum_{k=n}^{n+1}\ind_{\{\mu=k\}}\Big(\p^\T h(t_k,\oXnx_{k}) - \ouh(t_{k+1},\oXnx_{k+1},\p)\Big) + \ouh(t_{n+2},\oXnx_{n+2},\p)\Big].
		\end{align*}
		Recall that $\ouh(t_{k+1},\oXnx_{k+1},\p)\leq \p^\T h(t_{k+1},\oXnx_{k+1})$. We repeat the induction $(n'-2)$ times to get
		\begin{align}
			\notag
			&\ouh(t_n,\x,\p)
			\\
			\notag
			&\geq \min_{\mu\in\mT^{t_n}_{[n..N]}}\EE\Big[ \sum_{k=n}^{n+n'-1}\ind_{\{\mu=k\}}\Big(\p^\T h(t_k,\oXnx_{k}) - \ouh(t_{k+1},\oXnx_{k+1},\p)\Big) + \ouh(t_{n+n'},\oXnx_{n+n'},\p)\Big]
			\\
			\notag
			&\geq \min_{\mu\in\mT^{t_n}_{[n..N]}}\EE\Big[ \sum_{k=n}^{n+n'-1}\ind_{\{\mu=k\}}\Big(\p^\T h(t_k,\oXnx_{k}) - \p^\T h(t_{k+1},\oXnx_{k+1})\Big) + \ouh(t_{n+n'},\oXnx_{n+n'},\p)\Big],
		\end{align}
	}
	and by \ref{A1}, we have for all $\p\in\{\p_1,\ldots,\p_M\}\subset\DI$,
	{\rev
		\begin{equation}
			\label{eq:holdt2}
			\ouh(t_n,\x,\p)\geq -2\llbracket h\rrbracket\max\{\Vert b\Vert_{\infty},\Vert a\Vert_{\infty}\}\hht^{1/2} +  \EE\big[\ouh(t_{n+n'},\oXnx_{n+n'},\p)\big].
	\end{equation}}
	Now let $\p\in\DI\setminus \{\p_1,\ldots,\p_M\}$ and use \Cref{eq:fdisc5}. Since $\{\pinx_1(\p),\ldots,\pinx_I(\p)\}\subset \{\p_1,\ldots,\p_M\}$, we can apply \Cref{eq:holdt2} to get
	{\rev
		\begin{equation}
			\label{eq:holdt3}  
			\begin{split}
				&\ouh(t_n,\x,\pinx_i(\p))
				\geq -2\llbracket h\rrbracket \max\{\Vert b\Vert_{\infty},\Vert a\Vert_{\infty}\}\hht^{1/2} +  \EE\big[\ouh(t_{n+n'},\oXnx_{n+n'},\pinx_i(\p))\big].
			\end{split}
	\end{equation}}
	Next, we multiply both sides of \Cref{eq:holdt3} by $\lambdanx_i(\p)$, then sum for $i = 1,\ldots,I$. It follows by \Cref{eq:fdisc5} and the convexity of $\p\mapsto \ouh(t_{n+n'},\oXnx_{n+n'},\p)$ that
	{\rev
		\begin{align*}
			\notag
			&\ouh(t_n,\x,\p)
			\\
			&\geq -2\llbracket h\rrbracket\max\{\Vert b\Vert_{\infty},\Vert a\Vert_{\infty}\}\hht^{1/2} +  \EE\big[\sum_{ i= 1}^I\ouh(t_{n+n'},\oXnx_{n+n'},\pinx_i(\p))\lambdanx_i(\p)\big]
			\\
			&\geq -2\llbracket h\rrbracket\max\{\Vert b\Vert_{\infty},\Vert a\Vert_{\infty}\}\hht^{1/2} +  \EE\big[\ouh(t_{n+n'},\oXnx_{n+n'},\p)\big].
	\end{align*}}
	This proves \Cref{eq:holdt22}.
	
	\medskip
	
	\textbf{Step 2}: We prove that 
	{\rev \begin{align}
			\label{eq:holdt333}
			\ouh(t_n,\x,\p)-\EE\big[\ouh(t_{n+n'},\oXnx_{n+n'},\p)\big]\leq 2\llbracket f\rrbracket\max\{\Vert b\Vert_{\infty},\Vert a\Vert_{\infty}\}\hht^{1/2}.
	\end{align}}
	
	Let $\p\in\{\p_1,\ldots,\p_M\}\subset\DI$. The formula \Cref{eq:fdisc11} implies that
	{\rev
		\begin{align*}
			\notag
			\ouh(t_n,\x,\p)&\leq \max\Big\{\EE \big[\ouh(t_{n+1},\oXnx_{n+1},\p)\big],\,\p^\T f(t_n,\x)\Big\}
			\\
			\notag
			&= \max_{\nu\in\mT^{t_n}_{[n..N]}} \EE \Big[\ind_{\{\nu=n\}}\p^\T f(t_n,\oXnx_{n}) + \ind_{\{\nu> n\}}\ouh(t_{n+1},\oXnx_{n+1},\p)\Big]
			\\
			&= \max_{\nu\in\mT^{ton}_{[n..N]}} \EE \Big[\ind_{\{\nu=n\}}\Big(\p^\T f(t_n,\oXnx_{n})-\ouh(t_{n+1},\oXnx_{n+1},\p)\Big) + \ouh(t_{n+1},\oXnx_{n+1},\p)\Big].
		\end{align*}
		By induction, we have
		\begin{align*}
			\notag
			\ouh(t_{n+1},\oXnx_{n+1},\p)&\leq \max_{\nu\in\mT^{t_{n+1}}_{[(n+1)..N]}} \EE \Big[\ind_{\{\nu=n+1\}}\Big(\p^\T f(t_{n+1},\oXnx_{n+1})-\ouh(t_{n+2},\oXnx_{n+2},\p)\Big) + \ouh(t_{n+2},\oXnx_{n+2},\p)\Big],
		\end{align*}
		thus
		\begin{equation*}
			\ouh(t_n,\x,\p)\leq \max_{\nu\in\mT^{t_{n}}_{[n..N]}} \EE \Big[\sum_{k=n}^{n+1}\ind_{\{\nu=k\}}\Big(\p^\T f(t_{k},\oXnx_{k})-\ouh(t_{k+1},\oXnx_{k+1},\p)\Big) + \ouh(t_{n+2},\oXnx_{n+2},\p)\Big].
		\end{equation*}
		We recall that  $\p^\T f(t_{k},\oXnx_{k})\leq \ouh(t_{k},\oXnx_{k},\p)$. We repeat the induction $(n'-2)$ times to get
		\begin{align*}
			\ouh(t_n,\x,\p)&\leq \max_{\nu\in\mT^{t_{n}}_{[n..N]}} \EE \Big[\sum_{k=n}^{n+n'-1}\ind_{\{\nu=k\}}\Big(\p^\T f(t_{k},\oXnx_{k})-\ouh(t_{k+1},\oXnx_{k+1},\p)\Big) + \ouh(t_{n+n'},\oXnx_{n+n'},\p)\Big]
			\\
			&\leq \max_{\nu\in\mT^{t_{n}}_{[n..N]}} \EE \Big[\sum_{k=n}^{n+n'-1}\ind_{\{\nu=k\}}\Big(\p^\T  f(t_{k},\oXnx_{k})-\p^\T f(t_{k+1},\oXnx_{k+1})\Big) + \ouh(t_{n+n'},\oXnx_{n+n'},\p)\Big],
		\end{align*}
		and  by \ref{A1}, we have for all $\p\in\{\p_1,\ldots,\p_M\}\subset\DI$
		\begin{equation}
			\label{eq:holdt33}
			\ouh(t_n,\x,\p)\leq 2\llbracket f\rrbracket\max\{\Vert b\Vert_{\infty},\Vert a\Vert_{\infty}\}\hht^{1/2} + \EE \big[ \ouh(t_{n+n'},\oXnx_{n+n'},\p)\big].
	\end{equation}}
	Now, let $\p\in\DI\setminus \{\p_1,\ldots,\p_M\}$. By \Cref{eq:fdisc5} and because $\big\{\pinX_1(\p),\ldots,\pinX_I(\p)\big\}\subset \{\p_1,\ldots,\p_M\}$, we can apply \Cref{eq:holdt33} to get
	{\rev
		\begin{equation}
			\label{eq:lipt04}  
			\begin{split}
				&\ouh(t_n,\x,\pinX_i(\p))
				\leq {\rev 2\llbracket f\rrbracket}\max\{\Vert b\Vert_{\infty},\Vert a\Vert_{\infty}\}\hht^{1/2} +  \EE\big[\ouh(t_{n+n'},\oXnx_{n+n'},\pinX_i(\p))\big].
			\end{split}
	\end{equation}}
	Next, we multiply both sides of \Cref{eq:lipt04} by $\lambdanX_i(\p)$, then sum for $i = 1,\ldots,I$. It follows by \Cref{eq:fdisc5} and the convexity of $\p\mapsto \ouh(t_{n+n'},\x,\p)$ that
	{\rev
		\begin{align}
			\notag
			&2\llbracket f\rrbracket\max\{\Vert b\Vert_{\infty},\Vert a\Vert_{\infty}\}\hht^{1/2} +  \EE\big[\ouh(t_{n+n'},\oXnx_{n+n'},\p)\big]
			\\\notag
			&\geq \sum_{i=1}^{I}\ouh(t_n,\x,\pinX_i(\p))\lambdanX_i(\p)\geq  \ouh(t_n,\x,\p).
	\end{align}}
	This proves \Cref{eq:holdt333}.
	
	\medskip
	
	Now we combine the estimates \Cref{eq:holdt6,eq:holdt006}, which provides that
	{\rev
		\begin{equation}
			\label{eq:holdt8}
			\begin{split}
				&\lvert\ouh(t_n,\x,\p) - \ouh(t_{n+n'},\x,\p)\rvert
				\\
				&\leq\max\{\mLx,\,\llbracket f\rrbracket,\,\llbracket h\rrbracket\}\max\{\Vert b\Vert_{\infty},\Vert a\Vert_{\infty}\}\big(2\hht^{1/2} + (t_{n+n'} - t_{n})  + (t_{n+n'} - t_{n})^{1/2}\big).
			\end{split}
	\end{equation}}

	We conclude the proof by considering the case where $t\in[t_n,t_{n+1}]$ and $s\in[t_{n+n'},t_{n+n'+1}]$. We estimate the term $\sqrt{t_{n + n'} -t_n}$ appearing in the right hand side of \Cref{eq:holdt8} as follows. We use the inequality $\sqrt{\x + \y + \z}\leq \sqrt{\x} + \sqrt{\y}+ \sqrt{\z}$ for any positive number $\x, \y, \z$; and since $t\in[t_{n},t_{n +1}]$ and $s\in[t_{n +n'},t_{n+n'+1}]$, we get
	\begin{align*}
		\sqrt{t_{n+n'} -t_n} =  \sqrt{(t-t_n)+  (s-t) + (t_{n+n'}-s)} \leq 2\sqrt{\hht} +   \sqrt{s-t}.
	\end{align*}
	{\rev
		We have also
		\begin{align*}
			(t_{n+n'} - t_{n}) = (t - t_{n}) + (s-t) + (t_{n+n'} - s)\leq 2{\hht} +   {s-t}\leq 2{\hht} +   \sqrt{T}\sqrt{s-t}.
	\end{align*}}
	It follows that
	{\rev
		\begin{equation}
			\label{eq:holdt9}
			\begin{split}
				&\lvert\ouh(t_n,\x,\p) - \ouh(t_{n+n'},\x,\p)\rvert
				\\
				&\leq \max\{\mLx,\,\llbracket f\rrbracket,\,\llbracket h\rrbracket\}\max\{\Vert b\Vert_{\infty},\Vert a\Vert_{\infty}\}\big(2{\hht} + 4\sqrt{\hht} + \max\{1,\sqrt{T}\}\sqrt{s-t}\big).
			\end{split}
	\end{equation}}
	Finally, by \Cref{eq:fdisc5,eq:holdt9}, it implies that
	{\rev
		\begin{equation*}
			\lvert\ouh(s,\x,\p) - \ouh(t,\x,\p)\rvert\leq 8\max\{\mLx,\,\llbracket f\rrbracket,\,\llbracket h\rrbracket\}\max\{\Vert b\Vert_{\infty},\Vert a\Vert_{\infty}\}\max\{1,\sqrt{T}\}\big(\sqrt{\hht} 
			+\sqrt{s-t}\big).
	\end{equation*}}
\end{proof}
}

\section{Viscosity solution property}\label{sec:viscosityPty} 

In this section, we show the second part of \Cref{thm:mainRes}.
Here, $w$ denotes the limit of a subsequence of $\{\ouh\}_{\hh}$ on an arbitrary compact subsets of $[0,T]\times\RR^d\times \DI$ obtained by the Arzel\'a-Ascoli theorem.
As a limit of $\ouh$, which is uniformly continuous and bounded function  due to \Cref{lem:lipx}, \Cref{lem:lipp}, \Cref{lem:lipt}, $w$ is bounded and uniformly continuous.
As a limit of $\ouh$, which is convex in $\p$ by construction due to \cref{eq:fdisc5,eq:basis}, $w$ is convex in $\p$.
\subsection{Viscosity subsolution property of \texorpdfstring{$w$}{Lg}}
\begin{prop}\label{prop:subvis}
The limit $w$ is a viscosity subsolution of \Cref{eq:hjb} on $[0,T]\times\RR^d\times\Int(\DI)$.
\end{prop}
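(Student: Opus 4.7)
The plan is to verify separately, at the point $(\ot,\ox,\op)\in[0,T]\times\RR^d\times\Int(\DI)$, the three inequalities packaged into the viscosity subsolution condition \eqref{eq:hjbsub}: (I) $-\lambda(\op,\DD_\p^2\varphi(\ot,\ox,\op))\le 0$; (II) $\varphi(\ot,\ox,\op)-\op^\T h(\ot,\ox)\le 0$; and (III) $\min\{\mL\varphi,\varphi-\p^\T f\}(\ot,\ox,\op)\le 0$. Inequalities (I) and (II) come essentially for free from structural properties of the scheme. The $\vexp$-step in \eqref{eq:fdisc11} renders $\p\mapsto\ouh(t_n,\x,\p)$ convex on $\DI$, a property preserved under uniform limits, so $w(\ot,\ox,\cdot)$ is convex; since $\op$ is an interior maximum of $w(\ot,\ox,\cdot)-\varphi(\ot,\ox,\cdot)$, one obtains $\z^\T\DD_\p^2\varphi(\ot,\ox,\op)\z\ge 0$ for every $\z\in\mathrm{T}_{\DI}(\op)$, yielding (I). For (II), the outer $\min$ in \eqref{eq:fdisc1} gives $\oY^{n,\x,\p_m}\le \p_m^\T h(t_n,\x)$, and since $\vexp$ lies below its data, $\ouh(t_n,\x,\p_m)\le \p_m^\T h(t_n,\x)$ at each node; the linear interpolation \eqref{sumu} combined with linearity of $\p\mapsto\p^\T h$ extends the bound to all of $\DI$, and passing to the limit yields $w\le \p^\T h$, hence (II) via $\varphi(\ot,\ox,\op)=w(\ot,\ox,\op)$.

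For (III) I argue by cases. If $\varphi(\ot,\ox,\op)\le \op^\T f(\ot,\ox)$, there is nothing to prove; otherwise set $\delta:=\varphi(\ot,\ox,\op)-\op^\T f(\ot,\ox)>0$, and the task reduces to $\mL\varphi(\ot,\ox,\op)\le 0$. Following the Barles--Souganidis recipe, the strict-maximum property together with the uniform convergence $\ouh\to w$ on a compact neighbourhood $\mB$ of $(\ot,\ox,\op)$ produces a sequence of grid points $(t_{n_\hh},\x_\hh,\p_{m_\hh})\in\Pi^\hht\times\RR^d\times\mN_\hhp$ that approximately realize $\max_{\mB}(\ouh-\varphi)$, converge to $(\ot,\ox,\op)$, and satisfy $\epsilon_\hh:=(\ouh-\varphi)(t_{n_\hh},\x_\hh,\p_{m_\hh})\to 0$ as $\hh\to 0$.

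At such grid nodes, \eqref{eq:fdisc11} yields $\ouh(t_{n_\hh},\x_\hh,\p_{m_\hh})\le \oY^{n_\hh,\x_\hh,\p_{m_\hh}}$. Since $\delta>0$, continuity forces, for $\hh$ small, both $\varphi(t_{n_\hh},\x_\hh,\p_{m_\hh})$ and $\EE[\ouh(t_{n_\hh+1},\oX^{n_\hh,\x_\hh}_{n_\hh+1},\p_{m_\hh})]$ to exceed $\p_{m_\hh}^\T f(t_{n_\hh},\x_\hh)$ by at least $\delta/2$, so the inner $\max$ in $\oY$ selects the expectation; the already established bound (II), together with continuity of $h$, makes the outer $\min$ asymptotically inactive. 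Consequently,
\begin{equation*}
\ouh(t_{n_\hh},\x_\hh,\p_{m_\hh})\;\le\;\EE\big[\ouh(t_{n_\hh+1},\oX^{n_\hh,\x_\hh}_{n_\hh+1},\p_{m_\hh})\big].
\end{equation*}
Replacing the left side by $\varphi(t_{n_\hh},\x_\hh,\p_{m_\hh})+\epsilon_\hh$ and bounding the right side by $\EE[\varphi(t_{n_\hh+1},\oX^{n_\hh,\x_\hh}_{n_\hh+1},\p_{m_\hh})]+\epsilon_\hh$ (the local bound $\ouh\le\varphi+\epsilon_\hh$ inside $\mB$, with the event $\{\oX^{n_\hh,\x_\hh}_{n_\hh+1}\notin\mB\}$ made negligible by \ref{A3} and the Euler increment), then dividing by $\hht$ and applying Ito's formula in expectation to the smooth $\varphi$, the passage $\hh\to 0$ gives $\partial_t\varphi(\ot,\ox,\op)+\mathcal{L}\varphi(\ot,\ox,\op)\ge 0$, i.e.\ $\mL\varphi(\ot,\ox,\op)\le 0$.

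The main technical obstacle I expect is arranging for $\p_{m_\hh}$ to be an element of $\mN_\hhp$ while simultaneously preserving the convergence $\p_{m_\hh}\to\op$ and the vanishing of $\epsilon_\hh$: at non-grid values of $\p$ the value $\ouh$ is only a convex combination of its grid values through \eqref{eq:fdisc5}, so the scheme identity \eqref{eq:fdisc11} cannot be invoked without first reducing to a node. The density $\hhp\to 0$ of $\mN_\hhp$ in $\DI$, together with the strict-maximum property and the uniform continuity of $\ouh-\varphi$ supplied by Lemmas~\ref{lem:lipx}--\ref{lem:lipt}, resolves this. A secondary subtlety is the slack $\ouh\le\oY$ (rather than equality) produced by the $\vexp$-step, but this points in the favourable direction for the subsolution inequality and is therefore harmless.
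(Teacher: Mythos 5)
Your proof is correct, and parts (I) and (II) coincide with the paper's treatment of its properties (P1) and (P2): convexity in $\p$ is inherited by the limit and tested against $\varphi$ at the interior maximum, and the upper bound $\ouh\le\p^\T h$ is built into the scheme and passes to the limit. The interesting difference is in (III). The paper proves (P3) by contradiction: it assumes $w>\op^\T f$ \emph{and} $\mL w>0$ at $(\ot,\ox,\op)$, invokes a dynamic programming inequality over arbitrary stopping rules $\nu\in\mT^{\ot_k}_{[n_k..N]}$ (its \eqref{eq:subvis12}), and contradicts it via a Taylor expansion of the shifted test function along the stopped Euler process, in the style of Touzi. You instead run the direct Barles--Souganidis consistency argument over a single time step: from $\ouh(t_n,\x,\p_m)\le\oY^{n,\x,\p_m}\le\max\{\EE[\ouh(t_{n+1},\oXnx_{n+1},\p_m)],\p_m^\T f\}$, the case $\delta>0$ forces the maximum to select the expectation (if not, $\ouh\le\p^\T f$ at the test points, contradicting $\delta>0$ in the limit --- this dichotomy is the cleanest way to justify your ``continuity forces'' step), and then the maximum-point cancellation of $\epsilon_\hh$ plus one Taylor expansion yields $(\partial_t+\mathcal{L})\varphi(\ot,\ox,\op)\ge 0$. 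This buys a shorter argument with no stopping times and no contradiction; the stopping-time machinery in the paper is genuinely needed only for the supersolution side (where the a posteriori martingales enter), so nothing is lost by your simplification here. Your two flagged technicalities --- forcing the test points onto $\Pi^{\hht}\times\RR^d\times\mN_{\hhp}$ while keeping $\epsilon_\hh\to0$ (the paper's appeal to \cite[Lemma~2.4]{bardi2009optimal}), and controlling the event $\{\oXnx_{n+1}\notin\mB\}$ via \ref{A3} and the bounded Euler increment --- are exactly the ones the paper also has to handle, and are resolved the same way.
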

\begin{proof}
Let $\varphi:[0,T]\times\RR^d\times\DI\rightarrow \RR$ be a smooth test function such that $w - \varphi$ has a strict global maximum at $(\ot,\ox,\op)$. We have to show that $w$ satisfies \Cref{def:viscosity}(i) at $(t,\x,\p) = (\ot,\ox,\op)$, which equivalently consists to show that
{\rev
\begin{alignat}{3}
\label{eq:ass1}\tag{P1}
&\lambda(\op,\DD_\p^2\varphi(\ot, \ox, \op))\geq 0,&&\mbox{ and }&&  
\\
\label{eq:ass2}\tag{P2}
&\,\varphi(\ot, \ox, \op)\leq \op^\T h(\ot,\ox),&&\mbox{ and }&& 
\\
\label{eq:ass3}\tag{P3}
&\varphi(\ot,\ox,\op) > \op^\T f(\ot,\ox)&&\mbox{ implies }&&\mL \varphi(\ot,\ox,\op)\leq0. 
\end{alignat}}

{\rev As a limit of a sequence of convex functions in $\p$, $w$ is also a convex function in $\p$. Moreover, by \cite[Theorem~1]{oberman2007the}, if $w$ is a convex function in $\p$, then it is the viscosity solution of the obstacle problem {\revv$-\lambda(\p,\DD_\p^2w(t, \x, \p)) \leq 0$}. It means that the test function $\varphi$ satisfies {\revv$-\lambda(\p,\DD_\p^2\varphi(t, \x, \p)) \leq 0$} at $(t, \x, \p) = (\ot, \ox, \op)$. Hence, \Cref{eq:ass1} holds.
}

By \Cref{eq:fdisc11}, $\forall\,(n,\x,\p)\in\{0,\ldots,N\}\times\RR^d\times\DI$, it holds that $\ouh(t_n,\x,\p)\leq \p^\T h(t_n,\x)$. Thus by continuity, we obtain \Cref{eq:ass2}.

It remains to show \Cref{eq:ass3}. 
For $\hht_k=T/N_k$, 
$\hhp_k$, s.t., $N_k\rightarrow \infty$ (i.e. $\hht_k \rightarrow 0$), $\hhp_k \rightarrow 0$ for $k\rightarrow \infty$
we consider a sequence $(\ot_k,\ox_k,\op_k)_{k\in\NN}$  with $\ot_k= n_k\hht_k\in \Pi^{\hht_k}$, $n_k\in \NN$, $\op_k\in\mN_{\hhp_k}$ 
such that $(\ot_k,\ox_k,\op_k)\rightarrow (\ot,\ox,\op)$ for $k\to\infty$ and s.t. $\ouh - \varphi$ admits a global maximum at $(\ot_k,\ox_k,\op_k)$ with maximum value equal to $0$, cf.\ \cite[Lemma~2.4]{bardi2009optimal}.
Define $\varphi^\hh := \varphi + (\ouh - \varphi)(\ot_k,\ox_k,\op_k)$. Then, for all $(\x,\p)\in\RR\times\DI$ we have
\begin{equation} \label{eq:subvis3}
(\ouh - \varphi^\hh)(\ot_k + \hht_k,\x,\p)\leq (\ouh - \varphi^\hh)(\ot_k,\ox_k,\op_k) = 0.
\end{equation}

To prove \Cref{eq:ass3} we use arguments similar to \cite[Chapter 3]{touzi2013optimal}, i.e.
we assume in contrario that
{\rev
\begin{equation*}
\varphi(\ot,\ox,\op) > \op^\T f(\ot,\ox)\quad\mbox{and}\quad \mL \varphi(\ot,\ox,\op)>0,
\end{equation*}}
and work toward a contradiction on the dynamic programming principle, which implies that
\begin{equation}\label{eq:subvis12}
\begin{split}
\ouh&(\ot_k,\ox_k,\op_k)
\leq \max_{\nu\in\mT^{\ot_k}_{[ n_k..N]}}\EE\big[\ouh(\ot_k   + \hht_k,\oX^{n_k,\ox_k}_{n_k+ 1},\op_k)\ind_{\{\nu\geq n_k   + 1\}}   + \op_k^\T f(t_\nu,\ox_k)\ind_{\{\nu< n_k   + 1\}}\big].
\end{split}
\end{equation}

By \Cref{eq:subvis3} and the continuity of $\varphi$ and $f$, we can find $\delta>0$ and $k$ large enough (equivalently $\hht_k$ small enough) such that
\begin{equation}\label{eq:subvis5}
\varphi^\hh(\ot_k,\ox_k,\op_k) \geq \op_k^\T f(\ot_k,\ox_k)+\delta\quad\mbox{and}\quad \mL\varphi^\hh(\ot_k,\ox_k,\op_k)\geq\delta,
\end{equation}
for $(\ot_k,\ox_k,\op_k)\in\hht_k\mB $, where $\mB $ is the unit ball of $[0,T]\times\RR^d\times\DI$ centered at $(\ot,\ox,\op)$. We choose an arbitrary stopping rule $\nu\in\mT^{\ot_k}_{[n_k..N]}$. By \Cref{eq:subvis3}, we have that
\begin{equation}
\label{eq:subvis4}
\begin{split}
\EE&\big[\ouh((\ot_k + \hht_k)\wedge t_\nu,\oX^{n_k,\ox_k}_{(n_k +1)\wedge \nu},\op_k)\big] -\ouh (\ot_k,\ox_k,\op_k) 
\\
&= \EE\big[(\ouh - \varphi^\hh)((\ot_k + \hht_k)\wedge t_\nu,\oX^{n_k,\ox_k}_{(n_k +1)\wedge \nu},\op_k) \big]
\\
&\qquad +\EE\big[\varphi^\hh((\ot_k + \hht_k)\wedge t_\nu,\oX^{n_k,\ox_k}_{(n_k +1)\wedge \nu},\op_k)\big]  - \varphi^\hh(\ot_k,\ox_k, \op_k).
\end{split}
\end{equation}
Applying the Taylor formula to the term $\varphi^\hh((\ot_k + \hht_k)\wedge t_\nu,\oX^{n_k,\ox_k}_{(n_k\wedge \nu)+ 1},\op_k)$ appearing in the right-hand side of \Cref{eq:subvis4}, we get
\begin{equation}
\label{eq:subvis6}
\begin{split}
\EE&\big[\ouh((\ot_k + \hht_k)\wedge t_\nu,\oX^{n_k,\ox_k}_{(n_k +1)\wedge \nu},\op_k)\big] -\ouh (\ot_k,\ox_k,\op_k) 
\\
&=\EE\big[(\ouh - \varphi^\hh)((\ot_k + \hht_k)\wedge t_\nu,\oX^{n_k,\ox_k}_{(n_k +1)\wedge \nu},\op_k) 
\\
&\qquad+(\dt + \mathcal{L})\varphi^\hh(\ot_k,\ox_k,\op_k)(\hht_k\wedge(t_\nu - \ot_k))\big] + C\hht_k\mO({\hht_k}^{1/2}).
\end{split}
\end{equation}
For $k$ large enough (equivalently $\hht_k$ small enough), $(\ot_k,\ox_k,\op_k)\in\hht_k\mB$. Thus the assertion \Cref{eq:subvis5} applies to $\mL\varphi^\hh$. It follows directly from \Cref{eq:subvis6}  that 
\begin{align}
\notag
\EE&\big[\ouh((\ot_k + \hht_k)\wedge t_\nu,\oX^{n_k,\ox_k}_{(n_k +1)\wedge \nu},\op_k)\big] -\ouh (\ot_k,\ox_k,\op_k) 
\\
\notag
&\leq \EE\big[(\ouh - \varphi^\hh)((\ot_k + \hht_k)\wedge t_\nu,\oX^{n_k,\ox_k}_{(n_k +1)\wedge \nu},\op_k) \big] + C\hht_k\mO({\hht_k}^{1/2})
\\
\label{eq:subvis7}
\begin{split}
&\leq \EE\big[(\ouh - \varphi^\hh)(\ot_k  + \hht_k,\oX^{n_k,\ox_k}_{n_k+ 1},\op_k)\ind_{\{\nu\geq n_k  + 1 \}} 
\\
&\qquad+ (\ouh - \varphi^\hh)(t_\nu,\oX^{n_k,\ox_k}_{\nu},\op_k)\ind_{\{\nu<n_k  + 1\}}\big] + C\hht_k\mO({\hht_k}^{1/2}).
\end{split}
\end{align}
Note that if $\nu<n_k  + 1$, then $(t_\nu,\oX^{n_k,\ox_k}_{\nu},\op_k)\in\hht_k\mB$. Therefore, on one hand, by \Cref{eq:subvis3} and the continuity of $(\ouh - \varphi^\hh)$ it implies for the second term in the right-hand side of \Cref{eq:subvis7} that $(\ouh-\varphi^\hh)(t_\nu,\oX^{n_k,\ox_k}_{\nu},\op_k) = 0$. Thus we have 
\begin{equation}\label{eq:subvis9}
\begin{split}
	&\EE\big[\ouh((\ot_k + \hht_k)\wedge t_\nu,\oX^{n_k,\ox_k}_{(n_k +1)\wedge \nu},\op_k)\big] -\ouh (\ot_k,\ox_k,\op_k)
\leq -C\hht_k\mO({\hht_k}^{1/2})-\gamma\PP[\nu\geq n_k  + 1], 
\end{split}
\end{equation}
with 
\begin{equation*}
-\gamma= \max_{\hht_k\partial\mB} (\ouh-\varphi)<0.
\end{equation*}
On the other hand, the assertion \Cref{eq:subvis5} applies to $\varphi^\hh$. Thus we have
\begin{align}
\notag
\EE&\big[\ouh((\ot_k + \hht_k)\wedge t_\nu,\oX^{n_k,\ox_k}_{(n_k +1)\wedge \nu},\op_k)\big] 
\\
\notag
&= \EE\big[\ouh(\ot_k + \hht_k,\oX^{n_k,\ox_k}_{n_k+ 1},\op_k)\ind_{\{\nu\geq n_k   + 1\}}  + \ouh(t_\nu,\oX^{n_k,\ox_k}_{\nu},\op_k)\ind_{\{\nu<n_k   + 1\}}\big]
\\
\label{eq:subvis8}
\begin{split}
&\geq \EE\big[\ouh(\ot_k + \hht_k,\oX^{n_k,\ox_k}_{n_k+ 1},\op_k)\ind_{\{\nu\geq n_k   + 1\}} 
+ \op_k^\T f(t_\nu,\ox_k)\ind_{\{\nu<n_k   + 1\}}\big] + \delta\PP[\nu< n_k  + 1].
\end{split}
\end{align}
Hence, combining \Cref{eq:subvis9,eq:subvis8}, we arrive at
\begin{equation*}
\begin{split}
&\ouh(\ot_k,\ox_k, \op_k) 
\\
&\geq \EE\big[\ouh(\ot_k  + \hht_k,\oX^{n_k,\ox_k}_{n_k+ 1},\op_k)\ind_{\{\nu\geq n_k   + 1\}}   
+ \op_k^\T f(t_\nu,\ox_k)\ind_{\{\nu<n_k   + 1\}}\big]
 +C\hht_k\mO({\hht_k}^{1/2})+ (\gamma \wedge \delta).
\end{split}
\end{equation*}
Since $\nu$ is arbitrary, the above inequality provides the desired contradiction to \Cref{eq:subvis12}.

\end{proof}
\subsection{Viscosity supersolution property of \texorpdfstring{$w$}{Lg}}

To establish the viscosity supersolution property of the candidate limit $w$, we construct martingale processes that satisfy a one-step-ahead dynamic programming principle.

We note that \Cref{eq:fdisc5} implies
\begin{equation} \label{eq:vexpu}
\begin{split}
\ouh(t_n,\x,\p) 	=  \sum_{i=1}^{I}\min\Big\{&\max\Big\{\EE \big[\ouh(t_{n+1},\oXnx_{n+1},\pinx_i(\p))\big],
\\
&\,(\pinx_i(\p))^\T f(t_n,\x)\Big\},\,(\pinx_i(\p))^\T h(t_n,\x)\Big\}\lambdanx_i(\p).
\end{split}
\end{equation}

Similar to \cite{banas2020numerical,cardaliaguet2009on,gruen2012aprobabilistic}, it is now possible to construct the so called \textit{one-step a posteriori martingales}, which start at $\p$ and jump then to one of the support points of the convex hull $ \pinx_1(\p),\ldots, \pinx_I(\p)$, $(n,\x,\p)\in\{0,\ldots,N\}\times\RR^d\times\{\p_1,\ldots,\p_M\}$.

\begin{defn}\label{def:feedBack}
For all $i = 1,\ldots,I$ and $(n,\x,\p)\in \{0,\ldots,N\}\times\RR^d\times\{\p_1,\ldots,\p_M\}$ we define the one-step feedbacks $\bp^{i,\x,\p}_{k+1}$ as $\big\{\pinx_1(\p),\ldots, \pinx_I(\p)\big\}$-valued random variables that are independent of $\sigma\{B_s: 0\leq s\leq T\}$, such that
\begin{itemize}
\item for $n = 0,\ldots,N-1$
\begin{enumerate}[label = $\circ$]
\item if $p_i = 0$, set $\bp_{n+1}^{i,\x,\p} = \p$.
\item if $p_i > 0$, choose $\bp_{n+1}^{i,\x,\p}$ among $\big\{\pinx_1(\p),\ldots, \pinx_I(\p)\big\}$ with probability
\begin{equation*}
\PP\Big[\bp_{n+1}^{i,\x,\p} = \pinx_\ell(\p)\big| \big(\bp_{n}^{j,\y,\q}\big)_{j\in\{1,\ldots,I\}, (n,\y,\q)\in \{1,\ldots,k\}\times\RR^d\times\{\p_1,\ldots,\p_M\}}\Big] = \frac{(\pinx_\ell(\p))_i}{p_i}\lambdanx_\ell(\p).
\end{equation*}
\end{enumerate}
\item for $k=N$, set $\bp_{N+1}^{i,\x,\p} = e_i$, where $\{e_i:i=1,\ldots,I\}$ is the canonical basis of $\RR^I$.
\end{itemize}

Moreover, we define $\bp_{n+1}^{\x,\p}= \bp_{n+1}^{\bi,\x,\p}$, where the index $\bi$ is a random variable with law $\PP[\bi = i] = p_i$, independent of the processes $B$ and  $\big(\bp_{n}^{j,\x,\p}\big)_{j\in\{1,\ldots,I\}, (n,\x,\p)\in\{1,\ldots,N\}\times\RR^d\times\{\p_1,\ldots,\p_M\}}$.
\end{defn}

For all $(n,\x,\p)\in\{0,\ldots,N\}\times\RR^d\times\{\p_1,\ldots,\p_M\}$, the process $\bp_{n+1}^{\x,\p}$ defined by \Cref{def:feedBack} is a martingale. Furthermore, it satisfies the following one-step dynamic programming {principle}.

\begin{lem}\label{lem:dynamicProg}
For all $(n,\x,\p)\in\{0,\ldots,N\}\times\RR^d\times\{\p_1,\ldots,\p_M\}$ it holds that
\begin{equation*} 
\ouh(t_n,\x,\p) =   \EE\Big[\min\Big\{\max\Big\{\ouh(t_{n+1},\oXnx_{n+1},\bp_{n+1}^{\x,\p}),\,(\bp_{n+1}^{\x,\p})^\T f(t_n,\x)\Big\},\,(\bp_{n+1}^{\x,\p})^\T h(t_n,\x)\Big\}\Big].
\end{equation*}
\end{lem}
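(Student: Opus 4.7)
The plan is to verify the identity by computing the right-hand side directly from the distribution of $\bp_{n+1}^{\x,\p}$ and its independence from the state process $\oXnx_{n+1}$, and then to recognize the resulting expression as the convex combination given by \eqref{eq:vexpu}. So the structure is two short computational steps feeding into a direct comparison with \eqref{eq:vexpu}.

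First I would determine the law of $\bp_{n+1}^{\x,\p}$ on the vertex set $\{\pinx_1(\p),\ldots,\pinx_I(\p)\}$. Using \Cref{def:feedBack}, the representation $\bp_{n+1}^{\x,\p} = \bp_{n+1}^{\bi,\x,\p}$ with $\PP[\bi = i] = p_i$, and the total probability formula, I get
\begin{equation*}
\PP[\bp_{n+1}^{\x,\p} = \pinx_\ell(\p)] = \sum_{i:\, p_i > 0} p_i \cdot \frac{(\pinx_\ell(\p))_i}{p_i}\lambdanx_\ell(\p) = \lambdanx_\ell(\p)\sum_{i=1}^{I} (\pinx_\ell(\p))_i = \lambdanx_\ell(\p),
\end{equation*}
where the last equality uses $\pinx_\ell(\p)\in\DI$ and the fact that $(\pinx_\ell(\p))_i = 0$ whenever $p_i = 0$ and $\lambdanx_\ell(\p) > 0$. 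The latter follows by reading off the $i$-th coordinate of the identity $\p = \sum_\ell \lambdanx_\ell(\p)\pinx_\ell(\p)$ in \eqref{eq:basis}: a sum of non-negative numbers equaling zero forces each summand to vanish, so the restricted sum over $\{i: p_i>0\}$ and the full sum over $i$ coincide.

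Next I would use the independence of $\bp_{n+1}^{\x,\p}$ from $\sigma\{B_s: 0 \leq s \leq T\}$, and hence from $\oXnx_{n+1}$, to condition on $\bp_{n+1}^{\x,\p}$ in the outer expectation. Setting
\begin{equation*}
T_\ell := \min\Big\{\max\Big\{\EE\big[\ouh(t_{n+1},\oXnx_{n+1},\pinx_\ell(\p))\big],\, (\pinx_\ell(\p))^\T f(t_n,\x)\Big\},\, (\pinx_\ell(\p))^\T h(t_n,\x)\Big\},
\end{equation*}
the conditioning step rewrites the right-hand side of the claimed identity as $\sum_{\ell=1}^{I}\lambdanx_\ell(\p)\,T_\ell$, and comparing with \eqref{eq:vexpu} immediately yields $\ouh(t_n,\x,\p) = \sum_{\ell=1}^{I}\lambdanx_\ell(\p)\,T_\ell$, which is the desired identity.

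The principal technical obstacle I expect is the careful handling of the degenerate case $p_i = 0$ in \Cref{def:feedBack}, where the feedback $\bp_{n+1}^{i,\x,\p}$ is set artificially to $\p$. This assignment does not contribute to the law of $\bp_{n+1}^{\x,\p}$ since it is weighted by $\PP[\bi=i] = p_i = 0$, but the coordinate-wise argument above is precisely what is needed to extend the restricted sum $\sum_{i: p_i>0} (\pinx_\ell(\p))_i$ to the full sum $\sum_{i=1}^I (\pinx_\ell(\p))_i = 1$ and thereby to match the mesh weight $\lambdanx_\ell(\p)$ in the final comparison with \eqref{eq:vexpu}.
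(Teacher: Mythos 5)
Your proposal is correct and follows essentially the same route as the paper's proof: both compute, via the total probability formula over the index $\bi$ and the identity $\sum_{i=1}^I(\pinx_\ell(\p))_i=1$, that $\bp_{n+1}^{\x,\p}$ hits $\pinx_\ell(\p)$ with probability $\lambdanx_\ell(\p)$, then use the independence of the feedback from the state process to reduce the right-hand side to the convex combination in \eqref{eq:vexpu}. The only difference is cosmetic: the paper simply assumes $p_i>0$ for all $i$, whereas you explicitly justify the degenerate case $p_i=0$ via the coordinate identity in \eqref{eq:basis}, which is a welcome extra bit of care.
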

\begin{proof}
The proof is similar to the one of \cite[Lemma\,3.11]{gruen2012aprobabilistic}.

We fix $(n,\x)\in\{0,\ldots,N\}\times\RR^d$, and define the map $\p\in\{\p_1,\ldots,\p_M\}\mapsto F(\p)$ by 
\begin{equation*} 
F(\p) := \EE\Big[\min\Big\{\max\Big\{\ouh(t_{n+1},\oXnx_{n+1},\bp_{n+1}^{\x,\p}),\,(\bp_{n+1}^{\x,\p})^\T f(t_n,\x)\Big\},\,(\bp_{n+1}^{\x,\p})^\T h(t_n,\x)\Big\}\Big].
\end{equation*}
Assume $(\p)_i>0$ for all $i =1,\ldots,I$. By \Cref{def:feedBack} it holds that
\begin{align*}
\EE\big[F(\bp_{n+1}^{\x,\p})\big] &= \sum_{i=1}^{I}\EE\big[\ind_{\{\bi=i\}}F(\bp_{n+1}^{i,\x,\p})\big]= \sum_{i=1}^{I}\PP[{\bi=i}]\EE\big[F(\bp_{n+1}^{i,\x,\p})\big]
\\
&=\sum_{i=1}^{I}p_i\sum_{\ell = 1}^{I}\frac{(\pinx_\ell(\p))_i}{p_i}F(\pinx_\ell(\p))\lambdanx_\ell(\p) 
\\
&=\sum_{\ell = 1}^{I}\sum_{i=1}^{I}p_i\frac{(\pinx_\ell(\p))_i}{p_i}F(\pinx_\ell(\p))\lambdanx_\ell(\p) 
\\
&=\sum_{\ell = 1}^{I} F(\pinx_\ell(\p))\lambdanx_\ell(\p)\Big(\sum_{i=1}^{I}{(\pinx_\ell(\p))_i}\Big)=\sum_{\ell = 1}^{I} F(\pinx_\ell(\p))\lambdanx_\ell(\p),
\end{align*} 
since $\sum_{i=1}^{I}{(\pinx_\ell(\p))_i} = 1$. On noting \Cref{eq:vexpu}, the statement follows immediately.
\end{proof}

\begin{prop}\label{prop:supvis}
The limit $w$  is a viscosity supersolution of \Cref{eq:hjb} on $[0,T]\times\RR^d\times\DI$.
\end{prop}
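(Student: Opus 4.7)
The plan is to mirror the structure of the subsolution proof in \Cref{prop:subvis}, with the stopping-rule inequality \Cref{eq:subvis12} replaced by the a-posteriori-martingale dynamic-programming identity of \Cref{lem:dynamicProg}, and with approximating \emph{minima} (rather than maxima) of $\ouh-\varphi$. Let $\varphi$ be a smooth test function such that $w-\varphi$ attains a strict global minimum zero at $(\bar t,\bar x,\bar p)\in[0,T]\times\RR^d\times\DI$. Arguing by contradiction, I assume the supersolution property fails at $(\bar t,\bar x,\bar p)$, i.e., for some $\delta>0$,
\begin{equation*}
\lambda(\bar p,\DD_\p^2\varphi)>\delta,\qquad \varphi-\bar p^\T h<-\delta,\qquad \min\{\mL\varphi,\varphi-\bar p^\T f\}<-\delta.
\end{equation*}

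Following \Cref{prop:subvis}, I pick $(\bar t_k,\bar x_k,\bar p_k)\in\Pi^{\hht_k}\times\RR^d\times\mN_{\hhp_k}$ converging to $(\bar t,\bar x,\bar p)$ at which $\ouh-\varphi$ attains local minima, and set $\varphi^\hh:=\varphi+(\ouh-\varphi)(\bar t_k,\bar x_k,\bar p_k)$, so that $\ouh\geq\varphi^\hh$ on a fixed neighbourhood of $(\bar t,\bar x,\bar p)$, with equality at $(\bar t_k,\bar x_k,\bar p_k)$. By the monotonicity of $\min$ and $\max$ in each entry, \Cref{lem:dynamicProg} yields
\begin{equation*}
\varphi^\hh(\bar t_k,\bar x_k,\bar p_k)\geq\EE\Big[\min\Big\{\max\Big\{\varphi^\hh(t_{n_k+1},\oX^{n_k,\bar x_k}_{n_k+1},\bp^{\bar x_k,\bar p_k}_{n_k+1}),(\bp^{\bar x_k,\bar p_k}_{n_k+1})^\T f(\bar t_k,\bar x_k)\Big\},(\bp^{\bar x_k,\bar p_k}_{n_k+1})^\T h(\bar t_k,\bar x_k)\Big\}\Big].
\end{equation*}
Since $\bp^{\bar x_k,\bar p_k}_{n_k+1}\in\DI$ and $\varphi-\bar p^\T h<-\delta$, continuity implies that for $k$ large the outer $\min$ is inactive $\PPas$, reducing the inequality to
\begin{equation*}
\varphi^\hh(\bar t_k,\bar x_k,\bar p_k)\geq\EE\Big[\max\Big\{\varphi^\hh(t_{n_k+1},\oX^{n_k,\bar x_k}_{n_k+1},\bp^{\bar x_k,\bar p_k}_{n_k+1}),(\bp^{\bar x_k,\bar p_k}_{n_k+1})^\T f(\bar t_k,\bar x_k)\Big\}\Big].
\end{equation*}

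I then split according to the two branches of $\min\{\mL\varphi,\varphi-\bar p^\T f\}<-\delta$. If $\varphi(\bar t,\bar x,\bar p)-\bar p^\T f(\bar t,\bar x)<-\delta$, continuity and the martingale property $\EE[\bp^{\bar x_k,\bar p_k}_{n_k+1}]=\bar p_k$ give $\EE[(\bp^{\bar x_k,\bar p_k}_{n_k+1})^\T f(\bar t_k,\bar x_k)]=\bar p_k^\T f(\bar t_k,\bar x_k)>\varphi^\hh(\bar t_k,\bar x_k,\bar p_k)+\delta/2$ for $k$ large, directly contradicting the displayed inequality. If instead $\mL\varphi(\bar t,\bar x,\bar p)<-\delta$, I use $\max\{\cdot,\cdot\}\geq\varphi^\hh$, condition on $\bp^{\bar x_k,\bar p_k}_{n_k+1}$, and combine an It\^o--Taylor expansion along the Euler step with a second-order expansion in $\p$ around $\bar p_k$; the first-order $\p$-term vanishes by the martingale property, producing
\begin{equation*}
0\geq -\mL\varphi(\bar t_k,\bar x_k,\bar p_k)\hht_k+\tfrac{1}{2}\EE\big[(\bp^{\bar x_k,\bar p_k}_{n_k+1}-\bar p_k)^\T \DD_\p^2\varphi(\bp^{\bar x_k,\bar p_k}_{n_k+1}-\bar p_k)\big]+o(\hht_k)+o\big(\EE[|\bp^{\bar x_k,\bar p_k}_{n_k+1}-\bar p_k|^2]\big).
\end{equation*}
By $\mL\varphi<-\delta$ the first summand is $\geq\delta\hht_k$; since $\bp^{\bar x_k,\bar p_k}_{n_k+1}-\bar p_k\in\DI-\bar p_k\subset T_{\DI}(\bar p_k)$ and $\lambda(\bar p,\DD_\p^2\varphi)>\delta$, the quadratic term is $\geq\tfrac{\delta}{2}\EE[|\bp^{\bar x_k,\bar p_k}_{n_k+1}-\bar p_k|^2]\geq 0$. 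Both contributions are strictly positive for $k$ large, giving the contradiction.

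The main obstacle is justifying, for $k$ large, the pointwise a.s.\ drop of the outer $\min$ (and, in the first branch, that the $f$-term dominates the inner $\max$): this rests on the fact that the support of $\bp^{\bar x_k,\bar p_k}_{n_k+1}$ is contained in the convex-hull vertices $\{\pinx_1(\bar p_k),\ldots,\pinx_I(\bar p_k)\}\subset\DI$ which, by construction of the partition $\mM_{\hhp_k}$, lie within distance $\hhp_k\to 0$ of $\bar p_k$, so that continuity of $f,h,\varphi$ propagates the strict inequalities from the limit to the pre-limit. A related technicality is the boundary case $\bar p\in\partial\DI$: if $(\bar p_k)_i=0$ for some $i$, \Cref{def:feedBack} sets $\bp^{i,\bar x_k,\bar p_k}_{n_k+1}=\bar p_k$, so no excursion occurs along those coordinates, and the containment in the tangent cone used in the Taylor lower bound is preserved.
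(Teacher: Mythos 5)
Your overall architecture (the one-step dynamic-programming identity of \Cref{lem:dynamicProg}, the martingale property of the a posteriori process, a Taylor expansion to extract the contradiction) is close in spirit to the paper's, and your Case 1 is a roundabout version of what the paper obtains directly: \eqref{eq:ass5} follows from $\ouh\geq\p^\T f$, which holds by construction of \eqref{eq:fdisc1}--\eqref{eq:fdisc11}, and passing to the limit. The genuine gap is in the claim you yourself single out as resolving the ``main obstacle'': that the support of $\bp^{\ox_k,\op_k}_{n_k+1}$, namely $\{\pinx_1(\op_k),\dots,\pinx_I(\op_k)\}$, lies within distance $\hhp_k$ of $\op_k$. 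These points are the vertices of the simplex of the \emph{data-dependent, convexity-preserving} partition $\mMnx_{\hhp}$ containing $\op_k$, not of the original mesh $\mM_{\hhp}$; since $\mNnx_{\hhp}$ is only a subset of $\mN_{\hhp}$, this coarsened partition can have simplices of diameter up to $\diam(\DI)$ (for instance, where the discrete convex envelope is affine on a large region, the support points are the extreme points of that region). The jumps of the one-step a posteriori martingale are therefore not $O(\hhp_k)$, and $\EE\big[|\bp^{\ox_k,\op_k}_{n_k+1}-\op_k|^2\big]$ need not vanish as $k\to\infty$.

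This breaks your Case 2 in two places. First, the second-order Taylor expansion in $\p$ with remainder $o\big(\EE[|\bp^{\ox_k,\op_k}_{n_k+1}-\op_k|^2]\big)$ is only valid for small increments, and the lower bound $\tfrac{\delta}{2}\EE[|\bp^{\ox_k,\op_k}_{n_k+1}-\op_k|^2]$ on the quadratic term requires $\z^\T\DD^2_\p\varphi\,\z\geq\delta|\z|^2$ along the entire segment from $\op_k$ to $\bp^{\ox_k,\op_k}_{n_k+1}$, which the local positivity inherited from $\lambda(\op,\DD^2_\p\varphi)>\delta$ does not provide. Second, the claimed a.s.\ inactivity of the outer $\min$ for $k$ large is justified only when $\bp^{\ox_k,\op_k}_{n_k+1}$ stays near $\op_k$. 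The paper's proof is built precisely to avoid this: it assumes (without loss of generality, after modifying the test function outside a neighbourhood) that $\varphi^\hh$ is convex on all of $\DI$, and uses the global supporting-hyperplane inequality \eqref{eq:supvis4} together with the martingale property and the independence of $\bp^{\ox_k,\op_k}_{n_k+1}$ from the Euler increment to eliminate the $\p$-excursion entirely, reducing matters to the It\^o--Taylor expansion in $(t,\x)$; the upper obstacle is then handled through the stopping-rule decomposition \eqref{eq:sup32}--\eqref{eq:sup36} rather than by discarding the $\min$ pointwise. If you replace your local second-order expansion in $\p$ by that global convexity inequality, your argument essentially collapses onto the paper's.
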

\begin{proof}
Let $\varphi:[0,T]\times \RR^d\times\DI\rightarrow \RR$ be a smooth test function, such that $(w - \varphi)$ has a strict global minimum at $(\ot,\ox,\op)$ with $(w - \varphi)(\ot,\ox,\op) = 0$ and such that its derivatives are uniformly Lipschitz continuous in $\p$. We have that $w$ satisfies the viscosity supersolution property \Cref{def:viscosity}(ii), which equivalently consists to show that
\begin{alignat}{3}
\label{eq:ass4}\tag{P4}
&\lambda(\op,\DD_\p^2\varphi(\ot, \ox, \op))\leq 0,&&\mbox{ or }&&
\\
\label{eq:ass5}\tag{P5}
&\varphi(\ot, \ox, \op)\geq \op^\T f(\ot,\ox),&&\mbox{ and }&&
\\
\label{eq:ass6}\tag{P6}
&\varphi(\ot,\ox,\op) < \op^\T h(\ot,\ox) && \mbox{ implies }&& \mL\varphi(\ot,\ox,\op) \geq 0.
\end{alignat}

If \Cref{eq:ass4} holds, then \Cref{eq:hjbsup} follows immediately. Let us assume that 
\begin{equation}
\label{eq:supvis1}
\lambda(\op,\DD_\p^2\varphi(\ot, \ox, \op))> 0.
\end{equation}

By \Cref{eq:fdisc11}, for every $(n,\x,\p)\in\{0,\ldots,N\}\times\RR^d\times\DI$, it holds that $\ouh(t_n,\x,\p)\geq \p^\T f(t_n,\x)$. Thus by continuity, we obtain \Cref{eq:ass5}.

It remains to prove \Cref{eq:ass6}. 
For $\hht_k=T/N_k$, $\hhp_k$, s.t., $\hht_k,\,\hhp_k \rightarrow \infty$ for $k \to\infty$
we consider a sequence $(\ot_k,\ox_k,\op_k)_{k\in\NN}$  with $\ot_k= n_k\hht_k\in \Pi^{\hht_k}$, $n_k\in \NN$, $\op_k\in\mN_{\hhp_k}$ 
such that $(\ot_k,\ox_k,\op_k)\rightarrow (\ot,\ox,\op)$ for $k\to\infty$ and s.t. $\ouh - \varphi$ has a global minimum at $(\ot_k,\ox_k,\op_k)$. 
Define $\varphi^\hh := \varphi + (\ouh - \varphi)(\ot_k,\ox_k,\op_k)$. Then, for all $(\x,\p)\in\RR\times\DI$ we have
\begin{equation} \label{eq:supvis2}
(\ouh - \varphi^\hh)(\ot_k + \hht_k,\x,\p)\geq (\ouh - \varphi^\hh)(\ot_k,\ox_k,\op_k) = 0.
\end{equation}

By \Cref{eq:supvis1}, there exist $\delta,\eta>0$ such that for all $k$ large enough we have
\begin{equation}\label{eq:supvis3}
\forall (t,\x,\p)\in\eta\widebar{\mB},\, \z\in T_{\DI}(\op_k),\, \z^\T\DD_\p^2\varphi(t, \x,\p)\z > 4\delta\lvert 	\z\rvert ^2,
\end{equation}
where $\widebar{\mB}$ is the unit ball of $[0,T]\times\RR^d\times\DI$ centered at $(\ot_k,\ox_k,\op_k)$.

Furthermore, we assume without loss of generality that outside of $\eta\widebar{\mB}$, $\varphi^\hh$ is still convex on $\DI$. Thus for any $(s,\x,\p)\in [\ot_k,T]\times\RR^d\times\DI$ it holds that
\begin{align}
\label{eq:supvis4}
\ouh(s, \x, \p)\geq \varphi^\hh(s,\x,\p)\geq \varphi^\hh(s, \x,\op_k) + (p-\op_k)^\T 	\DD_\p\varphi^\hh(s,\x,\op_k).
\end{align}

To prove \Cref{eq:ass6}, we use also arguments similar to \cite[Chapter 3]{touzi2013optimal}. 
We proceed by contradiction and assume that
{\rev
\begin{equation*}
\varphi(\ot,\ox,\op)<\op^\T h(\ot,\ox)\quad\mbox{and}\quad \mL \varphi(\ot,\ox,\op)<0.
\end{equation*}}
and work toward a contradiction on \Cref{lem:dynamicProg}, which by \Cref{eq:fdisc11} implies for all $(n,\x,\p)\in\{0,\ldots,N\}\times\RR^d\times\{\p_1,\ldots,\p_M\}$ that
\begin{equation} \label{eq:sup32}
\ouh(t_n,\x,\p)\geq\min_{\mu\in\mT^{\ot_k}_{[n_k..N]}}\EE\Big[ \ouh(t_{\mu+1},\oX^{n,\x}_{n+1},\bp_{n_k + 	1})\ind_{\{\mu\geq n_k + 1\}} +\bp_{n}^\T h(t_\mu,\x)\ind_{\{\mu< n_k + 1\}}\Big],
\end{equation}
where $\bp_{n_k+1}\coloneqq\bp_{n_k+1}^{\ox_k,\op_k}$.

By \Cref{eq:supvis2} and the continuity of $\varphi$ and $h$, we can find $\delta>0$ and $k$ large enough (equivalently $\hht_k$ small enough) such that
\begin{equation}\label{eq:sup30}
\varphi^\hh(\ot_k,\ox_k,\op_k) \leq \op_k^\T 	h(\ot_k,\ox_k)-\delta\quad\mbox{and}\quad \mL\varphi^\hh(\ot_k,\ox_k,\op_k)\leq 0,
\end{equation}
for $(\ot_k,\ox_k,\op_k)\in\hht_k\widebar{\mB} $, where $\widebar{\mB} $ is now the unit ball of $[0,T]\times\RR^d\times\DI$ centered at $(\ot,\ox,\op)$.  We choose an arbitrary stopping rule $\mu\in\mT^{\ot_k}_{[n_k..N]}$. By \Cref{eq:supvis2,eq:supvis4}, we have that
\begin{align}
\notag
\EE\big[\ouh&((\ot_k + \hht_k)\wedge t_\mu,\oX^{n_k,\ox_k}_{(n_k +1)\wedge \mu},\bp_{(n_k +1)\wedge \mu})\big]-\ouh 	(\ot_k,\ox_k,\op_k) 
\\
\notag
&\geq \EE\big[\varphi^\hh((\ot_k + \hht_k)\wedge t_\mu,\oX^{n_k,\ox_k}_{(n_k +1)\wedge \mu}, \op_k) \big]-\ouh 	(\ot_k,\ox_k,\op_k) 
\\
\notag
&\qquad+ \EE\big[(\bp_{(n_k +1)\wedge \mu}-\op_k)^\T\big] \EE\big[\DD_\p\varphi^\hh((\ot_k + \hht_k)\wedge 	t_\mu,\oX^{n_k,\ox_k}_{(n_k +1)\wedge \mu}, \op_k)\big]
\\
\label{eq:sup31}
& = \EE\big[\varphi^\hh((\ot_k + \hht_k)\wedge t_\mu,\oX^{n_k,\ox_k}_{(n_k +1)\wedge \mu}, \op_k) \big] -\ouh 	(\ot_k,\ox_k,\op_k).
\end{align}
Applying the Taylor formula to the right-hand side of \Cref{eq:sup31}, we get
\begin{equation}
\label{eq:sup33}
\begin{split}
\EE&\big[\ouh((\ot_k + \hht_k)\wedge t_\mu,\oX^{n_k,\ox_k}_{(n_k +1)\wedge \mu},\op_k)\big] -\ouh 	(\ot_k,\ox_k,\op_k) 
\\
&=\EE\big[(\ouh - \varphi^\hh)((\ot_k + \hht_k)\wedge t_\mu,\oX^{n_k,\ox_k}_{(n_k +1)\wedge \mu},\op_k) 
\\
&\quad+(\dt+\mathcal{L})\varphi^\hh(\ot_k,\ox_k,\op_k)(\hht_k\wedge(t_\mu - \ot_k))\big] + C\hht_k\mO(\hht_k^{1/2}).
\end{split}
\end{equation}

For $k$ large enough (equivalently $\hh$ small enough), $(\ot_k,\ox_k,\op_k)\in\hht_k\widebar{\mB}$. Thus the assertion \Cref{eq:sup30} applies to $\mL\varphi^\hh$. It follows immediately from \Cref{eq:sup33} that 
\begin{align}
\notag
\EE&\big[\ouh((\ot_k + \hht_k)\wedge t_\mu,\oX^{n_k,\ox_k}_{(n_k +1)\wedge \mu},\op_k)\big] -\ouh 	(\ot_k,\ox_k,\op_k) 
\\
\notag
&\geq \EE\big[(\ouh - \varphi^\hh)((\ot_k + \hht_k)\wedge t_\mu,\oX^{n_k,\ox_k}_{(n_k +1)\wedge \mu},\op_k) \big]
\\
\label{eq:sup34}
\begin{split}
&\geq \EE\big[(\ouh - \varphi^\hh)(\ot_k  + \hht_k,\oX^{n_k,\ox_k}_{n_k+ 1},\op_k)\ind_{\{\mu\geq n_k  + 1 	\}} 
+ (\ouh - \varphi^\hh)(t_\mu,\oX^{n_k,\ox_k}_{\mu},\op_k)\ind_{\{\mu<n_k  + 1\}}\big].
\end{split}
\end{align}
Note that if $\mu<n_k  + 1$, then $(t_\mu,\oX^{n_k,\ox_k}_{\mu},\op_k)\in\hht_k\widebar{\mB}$. Therefore, on one hand, by \Cref{eq:supvis2} and the continuity of $(\ouh - \varphi^\hh)$ it implies for the second term in the right-hand side of \Cref{eq:sup34} that $(\ouh - \varphi^\hh)(t_\mu,\oX^{n_k,\ox_k}_{\mu},\op_k) = 0$. Thus we have 
\begin{equation}\label{eq:sup35}
\EE\big[\ouh((\ot_k + \hht_k)\wedge t_\mu,\oX^{n_k,\ox_k}_{(n_k +1)\wedge \mu},\op_k)\big] -\ouh 	(\ot_k,\ox_k,\op_k)\geq \gamma\PP[n\geq n_k  + 1], 
\end{equation}
with 
\begin{equation*}
\gamma= \min_{\hht_k\partial\widebar{\mB}} (\ouh-\varphi^\hh)>0.
\end{equation*}
On the other hand, the assertion \Cref{eq:sup30} applies to $\varphi^\hh$. Thus we have
\begin{align}
\notag
&\EE\big[\ouh ((\ot_k + \hht_k)\wedge t_\mu,\oX^{n_k,\ox_k}_{(n_k +1)\wedge \mu},\bp_{(n_k +1)\wedge \mu})\big] 
\\
\notag
&= \EE\big[\ouh(\ot_k   + \hht_k,\oX^{n_k,\ox_k}_{n_k+ 1},\bp_{n_k +1})\ind_{\{\mu\geq n_k   + 1\}}  + 	\ouh(t_\mu,\oX^{n_k,\ox_k}_{\mu},\bp_{ \mu})\ind_{\{\mu<n_k   + 1\}}\big]
\\
\label{eq:sup36}
\begin{split}
&\leq \EE\big[\ouh(\ot_k   + \hht_k,\oX^{n_k,\ox_k}_{n_k+ 1},\bp_{n_k +1})\ind_{\{\mu\geq n_k   + 1\}} 
+ \bp_{\mu}^\T h(t_\mu,\ox_k)\ind_{\{\mu<n_k   + 1\}}\big] - \delta\PP[\mu< n_k  + 1].
\end{split}
\end{align}
Hence, combining \Cref{eq:sup35,eq:sup36}, we arrive at
\begin{equation*}
\begin{split}
\ouh(\ot_k,\ox_k, \op_k) \leq \EE\big[\ouh(\ot_k   + \hht_k,\oX^{n_k,\ox_k}_{n_k+ 1},\bp_{n_k 	+1})\ind_{\{\mu\geq n_k   + 1\}}
+ \bp_n^\T f(t_\mu,\ox_k)\ind_{\{\mu<n_k   + 1\}}\big] - (\gamma \wedge \delta).
\end{split}
\end{equation*}
Since $\mu$ is arbitrary, the above inequality provides the desired contradiction to \Cref{eq:sup32}.
\end{proof}
{\rev
\section{Regularity properties of the fully-discrete solution}\label{sec_full}
\begin{lem}[Uniform Lipschitz continuity in $\x$ and uniform boundedness]\label{lem:lipx_nn}
	The map $\x\mapsto\ouhkRL(t_n,\x,\p)$ is uniformly Lipschitz continuous, \ie for every $n\in\{0,\ldots,N\}$, it holds that
	\[
	\forall \x,\y\in\RR^d,\, \p\in\DI, \quad \lvert \ouhkRL(t_n, \x, \p) - \ouhkRL(t_n, \y, \p)\rvert\leq  \mLx\lvert \x - \y\rvert,  
	\] 
	where $\mLx := C \max\big\{\llbracket f\rrbracket,\llbracket g\rrbracket,\llbracket h\rrbracket\big\}\ee^ {(\llbracket b\rrbracket + \tfrac{1}{2}\llbracket a\rrbracket^2)T}$
  and $C\geq 1$ for sufficiently small $\Delta t$.
 Moreover,   $\ouhkRL$ is uniformly bounded with
	\begin{equation*}
		\lVert\ouhkRL\rVert_\infty\leq\max\Big\{\beta\mLx \big({\revv M_\rho } + \zeta\big),\,\lVert f\rVert_\infty,\,\lVert h\rVert_\infty\Big\},
	\end{equation*}
	where {\revv $M_{\rho} := \max_{n=0,\ldots,N-1}{\revv\Vert\rho^{s,n+1}_{\lambda-2}\Vert}_\infty$ (with {\revv $\rho^{s,n+1}_{\lambda-2}$} being the activation function at the layer {\revv$\lambda-2$}
 in the GroupSort Neural Network function $\Psi^{n+1}_\kappa$ used to approximate $\ouhkRL(t_{n+1}, \cdot, \cdot)$)
 and the ($n$-independent) constants $\beta$, $\zeta$ are given in \cref{eq:gs_constraints}, \Cref{eq:groupsort}}.
\end{lem}
\begin{proof}
	We fix $\x,\y\in\mD$, $\p\in\DI$ and proceed by induction for $n = N,N-1,\ldots,0$ to show that $\x\mapsto\ouhkRL(t_n,\x,\p)$ is uniformly Lipschitz continuous and uniformly bounded. 
	By \Cref{eq:dnn0} and \ref{A1}, \ref{A2}, the base case $n = N$ holds. 
	We assume that at time level $t_{n+1}$ there exist a constant $\gamma^{n+1}>0$ such that
	\begin{equation}\label{eq:lipx_nn_h1}
		\lvert \ouhkRL(t_{n+1}, \x, \p) - \ouhkRL(t_{n+1}, \y, \p)\rvert\leq  \gamma^{n+1}\lvert \x - \y\rvert,
	\end{equation}
	and that $\|\ouhkRL(t_{n+1},\cdot, \cdot)\|_\infty \leq \max\big\{\lVert f\rVert_\infty,\lVert h\rVert_\infty\big\}$.
	Thanks to \cite[Proposition~2.1]{maximilien2022approximation}, we can find a Groupsort Neural Network $\Psi^{n+1}_\kk\in \mGGn$ verifying for all $\x\in [-R,R]^d$ and $\p\in \DI$
	\begin{equation*}
		\vert \ouhkRL(t_{n+1}, \x, \p)  - \Psi^{n+1}_\kk(\x;\theta_{n+1}(\p))\vert\leq 2R\gamma^{n+1}\varepsilon.
	\end{equation*}
	Note that $\Psi^{n+1}_\kk\in \mGGn$ means that $\x\mapsto \Psi^{n+1}_\kk(\x;\theta_{n+1}(\p))$ is Lipschitz continuous with a Lipschitz constant $\gamma^{n+1}$.
	
	\textit{Uniform Lipschitz continuity in $\x$}. Suppose that $0\leq  \ouhkRL(t_n,\y,\p)-\ouhkRL(t_n,\x,\p)$. By \Cref{eq:fdisc5en} and the convexity of $\p\mapsto \ouhkRL(t_n,\y,\p)$, we have
	\begin{align}\notag
		\ouhkRL(t_n,\y,\p)-\ouhkRL(t_n,\x,\p) 
		&= \ouhkRL(t_n,\y,\p) - \sum_{i=1}^{I}\ouhkRL(t_n,\x,\tpinxk_i(\p))\tlambdanxk_i(\p)
		\\
		\label{eq:lipx_nn_1}
		&\leq \sum_{i=1}^{I}\big(\ouhkRL(t_n,\y,\tpinxk_i(\p)) - \ouhkRL(t_n,\x,\tpinxk_i(\p))\big)\tlambdanxk_i(\p).
	\end{align}
	The next step consists to derive an estimate for the summands in the right-hand side of \Cref{eq:lipx_nn_1}. 
	Note that $\{\tpinxk_1(\p),\ldots,\tpinxk_I(\p)\}\subset\{\p_1,\ldots,\p_M\}$ and the map $F\mapsto \vexp F$ is nonexpansive. 
	By \Cref{eq:fnn3}, it follows that
	\begin{equation}
		\label{eq:lipx_nn_2}
		\begin{split}
			&\ouhkRL(t_n,\y,\tpinxk_i(\p)) - \ouhkRL(t_n,\x,\tpinxk_i(\p))
			\\
			&\leq \max\Big\{\lvert\oY^{n,\y,\pi}_\kappa - \oY^{n,\x,\pi}_\kappa\rvert:{\pi\in \{\tpinxk_1(\p),\ldots,\tpinxk_I(\p)\}}\Big\}.
		\end{split}
	\end{equation}
	Let us derive an estimate for the term $\lvert\oY^{n,\y,\pi}_\kappa - \oY^{n,\x,\pi}_\kappa\rvert$ appearing in the right-hand side of \Cref{eq:lipx_nn_2}. 
	Using \Cref{eq:fnn2} we obtain
	\begin{align}
		\notag
		\begin{split}
			\lvert\oY^{n,\y,\pi}_\kappa - \oY^{n,\x,\pi}_\kappa\rvert
			&\leq \Big\lvert \min\Big\{\max\Big\{\EE
			\big[\Psi^{n+1}_\kk(\oXny_{n+1};\theta_{n}(\pi))\big],\,\pi^\T f(t_n,\y)\Big\},\,\pi^\T h(t_n,\y)\Big\}
			\\
			&\hspace{20pt}- \min\Big\{\max\Big\{\EE
			\big[\Psi^{n+1}_\kk(\oXnx_{n+1};\theta_{n}(\pi))\big],\,\pi^\T f(t_n,\x)\Big\},\,\pi^\T h(t_n,\x)\Big\}\Big\rvert
		\end{split}
		\\
		\notag
		\begin{split}
			&\leq \max\Big\{\EE\big\lvert\Psi^{n+1}_\kk(\oXny_{n+1};\theta_{n}(\pi))-\Psi^{n+1}_\kk(\oXnx_{n+1};\theta_{n}(\pi))\big\rvert,
			\\
			&\hspace{80pt}\lvert f(t_n,\y) - f(t_n,\x)\rvert,\,\lvert h(t_n,\y) - h(t_n,\x)\rvert\Big\}.
		\end{split}
	\end{align}
	By the hypothesis \cref{eq:lipx_nn_h1}, $\x\mapsto \ouhkRL(t_{n+1}, \x, \p) $ is Lipschitz continuous with a Lipschitz constant $\gamma^{n+1}$.
	Thus, using \ref{A1} and \Cref{prop:lipTransit}, we obtain
	\begin{equation}\label{eq:lipx_nn_3}
		\lvert\oY^{n,\y,\pi}_\kappa - \oY^{n,\x,\pi}_\kappa\rvert \leq\max\big\{\gamma^{n+1} Q_\hht,\,\llbracket f\rrbracket,\,\llbracket h\rrbracket\big\}\lvert \x-\y\rvert
	\end{equation}
	We insert \Cref{eq:lipx_nn_3} into the right-hand side of \Cref{eq:lipx_nn_2}, which gives that
	\begin{equation*}
		\begin{split}
			\ouhkRL(t_n,\y,\pinx_\ell(\p)) &- \ouhkRL(t_n,\x,\pinx_\ell(\p))\leq \max\big\{\gamma^{n+1} Q_\hht,\,\llbracket f\rrbracket,\,\llbracket h\rrbracket\big\}\lvert \x-\y\rvert.
		\end{split}
	\end{equation*}
	If $0\leq  \ouhkRL(t_n,\x,\p)-\ouhkRL(t_n,\y,\p)$, we commute the role of $\x$ and $\y$ in the previous steps to get
	\begin{equation*}
		\begin{split}
			\lvert \ouhkRL(t_n,\y,p) &- \ouhkRL(t_n,\x,\p)\rvert\leq \max\big\{\gamma^{n+1} 	Q_\hht,\,\llbracket f\rrbracket,\,\llbracket h\rrbracket\big\}\lvert \x-\y\rvert,
		\end{split}
	\end{equation*}
	which implies that the map $\x\mapsto \ouhkRL(t_n,\x,\p)$ is Lipschitz continuous with a Lipschitz coefficient $\gamma^n= \max\big\{\gamma^{n+1} Q_\hht,\,\llbracket f\rrbracket,\,\llbracket h\rrbracket\big\}$. We obtain recursively that
	\begin{align}
		\notag
		\gamma^n
		&\leq \max\big\{\gamma^{n+1},\,\llbracket f\rrbracket,\,\llbracket h\rrbracket\big\}Q_\hht
		\\
		\notag
		&\leq \max\big\{\gamma^{n+2},\,\llbracket f\rrbracket,\,\llbracket h\rrbracket\big\} Q_\hht^2
		\\
		\notag
		&\;\;\vdots
		\\
		\label{eq:lipx_nn_4}
		&\leq \max\big\{\gamma^N,\,\llbracket f\rrbracket,\,\llbracket h\rrbracket\big\}Q_\hht^{N-n}\leq C \max\big\{\llbracket f\rrbracket,\,\llbracket g\rrbracket,\,\llbracket h\rrbracket\big\}\ee^ {(\llbracket b\rrbracket + \tfrac{1}{2}\llbracket a\rrbracket^2)T}\eqqcolon \gamma,
	\end{align}
{\rev with $C\geq1$ for sufficiently small $\Delta t$.}
	Hence, the map $\x\mapsto\ouhkRL(t_n,\x,\p)$ is uniformly Lipschitz continuous with a Lipschitz coefficient $\gamma = \mLx$ defined in \Cref{eq:lip6}
and noting \Cref{lem:lipx} we conclude that the Lipschitz constants of $\x\mapsto\ouhkRL(t_n,\x,\p)$ and $\x\mapsto\ouh(t_n,\x,\p)$ coincide.
	
	\textit{Uniform boundedness.} It follows by \cref{eq:fnn2}, \cref{eq:fnn3}, and \cref{eq:fdisc5en}  that
	\begin{align*}
		\lvert\ouhkRL(t_n,\x,\p)\rvert&\leq \sum_{i = 1}^{I}\Big\lvert \min\Big\{\max\Big\{\EE
		\big[\Psi^{n+1}_\kk(\oX^{\x,n}_{n+1};\theta_{n}(\tpinxk_i))\big],
		\\
		&\hspace{40pt}(\tpinxk_i(\p))^\T f(t_n,\x)\Big\},\,(\tpinxk_i(\p))^\T h(t_n,\x)\Big\}\Big\rvert \tlambdanxk_i(\p)
		\\
		&\leq\max\big\{\Vert\Psi^{n+1}_\kk(\;\cdot\;;\theta_{n+1})\Vert_\infty,\,\lVert f\rVert_\infty,\,\lVert h\rVert_\infty\big\}.
	\end{align*}
	Noting that $\Psi^{n+1}_\kk(\;\cdot\;;\theta_{n+1})\in \mGGn$ and \Cref{eq:groupsort} we estimate
	\begin{equation*}
		\Vert\Psi^{n+1}_\kk(\;\cdot\;;\theta_{n+1})\Vert_\infty\leq \gamma^{n+1}\beta \Vert\Phi^{n+1}_\kk(\;\cdot\;;\theta_{n+1})\Vert_\infty,
	\end{equation*}
	where $\Phi_\kappa(\;\cdot\;;\theta_{n+1})\in \mSSn$.
  Recalling the representation of $\Phi_\kappa$, cf. \Cref{eq:groupsort0}, we can further estimate
	\begin{equation*}\revv
		\Vert\Psi^{n+1}_\kk(\;\cdot\;;\theta_{n+1})\Vert_\infty \leq \gamma^{n+1}\beta \big(\vert w^{(\lambda-1)}\vert_{\infty}\Vert\rho^{s,n+1}_{\lambda-2}\Vert_\infty + \vert b^{(\lambda-1)}\vert_{\infty}\big).
	\end{equation*}
	The first part of the proof implies that $ \gamma^{n+1} = \gamma$ and by \cref{eq:gs_constraints} we get that
	\begin{equation*}\revv
		\Vert\Psi^{n+1}_\kk(\;\cdot\;;\theta_{n+1})\Vert_\infty\leq \gamma\beta \big(\Vert\rho^{s,n+1}_{\lambda-2}\Vert_\infty + \zeta\big).
	\end{equation*}
	Consequently, we conclude that
	\begin{equation*}\revv
		\lvert\ouhkRL(t_n,\x,\p)\rvert\leq \max\big\{\gamma\beta \big(\Vert\rho^{s,n+1}_{\lambda-2}\Vert_\infty + \zeta\big),\,\lVert f\rVert_\infty,\,\lVert h\rVert_\infty\big\},
	\end{equation*}
and the desired estimate follows after taking the maximum over $n$ and $\x$, $\p$ in the above expression.
\end{proof}
}
\section{Numerical results}
\label{sec:num_res}
In the numerical experiments below we take $d = 1$, $T = 1$, $\mD = [0,1]$. 
We use the Quickhull algorithm for the computation of the discrete convex envelope in all experiments below.
{\revv For the ease of implementation we compute the fully-discrete solution $\ouhkRL$ using a Feedforward Neural Network approximation instead of the GroupSort network function.
For comparison we also compute a solution $\tuhRL$ using  a semi-Lagrangian scheme.}

\subsection*{Numerical experiment 1} 
In this experiment we determine experimental convergence rates of the numerical approximation for the linear PDE
\begin{equation}
\label{eq:pde_test1}
\partial_t u +  a^2\frac12\frac{\partial^2 u}{\partial x^2} + H = 0,\mbox{ in } [0,T]\times \mD,
\end{equation}
with a terminal condition $g(\x)= u(T, \x)$, where $a(\x)= 0.2 \x(1-\x)$,
and $H(t, \x)= 3\pi\sin(3\pi t)\cos(3\pi \x)+\frac12(3\pi a(\x))^2\cos(3\pi t)\cos(3\pi \x)$.
For the given data, \Cref{eq:pde_test1} has the solution $u(t,\x)= \cos(3\pi t)\cos(3\pi \x)$.

Due to the choice of the diffusion $a$, we may restrict the spatial domain to the interval $[0,1]$, which is partitioned into uniform line segments $\mT^{\hhx}=\{(\x_{\ell-1}, \x_{\ell})\}_{\ell=1}^L$, $\x_\ell = \ell\hhx$ with the mesh size $\hh\x = 1/{L}$. 
The time interval $[0,T]$ is partitioned with time-step $\hht= 1/N$. 
We consider the Euler approximation of the diffusion process associated with the \eqref{eq:pde_test1}:
\begin{equation*}
\oX^{n,\x_\ell}_{n+1} = \x_\ell+ a(\x_\ell)\xi^n_\ell\sqrt{\hht} \qquad n=0,\ldots,N,\,\,\ell=0,\ldots,L,
\end{equation*}
where $\{\xi^n_\ell\}$ are i.i.d random variables that take values $\{-1, 1\}$ with equal probability (i.e., the expectation $\mathbb{E}[\oX^{n,\x_\ell}_{n+1}]$ is an average of the two possibilities $\xi^n_\ell=\pm 1$).

For the computations we employ a Feedforward Neural Network (FFN) algorithm \Cref{algo:ffn}  and a semi-Lagrangian (SL) algorithm \Cref{algo:sl} (cf., e.g. \cite{banas2020numerical}) with piecewise linear interpolation in space.
We use a Feedforward Neural Network that contains one hidden layer with $10$ neurons, 
take ${\tt tanh}$ as the activation function for the hidden layers and choose identity function as activation function for the output layer.
Furthremore, we use the Levenberg--Marquardt algorithm in \Cref{algo:ffn} to solve the least-square problem at each time-step. 

\begin{algo}[FFN - Experiment 1]\label{algo:ffn}
Set $\ouhkRL(t_N, \x_\ell)= g(\x_\ell)$ for $\ell=0,\dots, L$ and proceed for $n = N-1,\ldots,0$ as follows:
\begin{enumerate}[label = $\circ$]
\item Compute:
\begin{equation*}
\theta_n=\argmin_{\theta\in\Theta_\kk}\frac{1}{L+1}\sum_{\ell=0}^L\big\lvert \ouhkRL(t_{n+1},\x_\ell) - \Psi_\kk(\x_\ell;\theta)\big\rvert^2,
\end{equation*}
\item For $\ell = 0,\dots,L$ set:
\begin{equation*}
\ouhkRL(t_n, \x_\ell) = \mathbb{E}[ \Psi_\kk(\oX^{n,\x_\ell}_{n+1};\theta_n)]  + \hht H(t_{n}, \x_\ell).
\end{equation*}
\end{enumerate}
\end{algo}

\begin{algo}[SL algorithm - Experiment 1]\label{algo:sl}
Set $\tuhRL(t_N, \x)= g(\x)$ for $\x\in \{\x_0,\ldots,\x_L\}$ and proceed for $k = N-1,\ldots,0$ as follows:
\begin{enumerate}[label = $\circ$]
\item Define $\x\mapsto\tuhRL(t_{n+1},\x)$ as the piecewise linear interpolant of $\big\{\tuhRL(t_{n+1},\x_\ell)\big\}_{\ell=0}^L$, i.e. for $\x\in[\x_{\ell}, \x_{\ell-1}]$:
\[
\tilde{u}^\triangle(t_{n+1}, \x) = \tuhRL(t_{n+1}, \x_{\ell-1} ) + \frac{(\tuhRL(t_{n+1}, \x_{\ell} ) - \tuhRL(t_{n+1}, \x_{\ell-1} ))(\x - \x_{\ell-1})}{\hhx}.
\]
\item For $\ell=0,\dots,L$ set:
\begin{equation*}
\tuhRL(t_n, \x_\ell) = \mathbb{E}[\tilde{u}^\triangle(\oX^{n,\x_\ell}_{n+1})]   + \hht H(t_{n}, \x_\ell).
\end{equation*}
\end{enumerate}
\end{algo}

We  display the respective solutions computed by \Cref{algo:ffn} and \Cref{algo:sl} in \Cref{fig:utx} computed for $\hhx = \hht =  1.56\times 10^{-2}$.
With $\hhx = \hht =  1.56\times 10^{-2}$, both numerical solutions are graphically similar, with noticeable differences at extremal points, i.e., around $\x = 0$, between $(0.6,0.7)$, and around $\x = 1$.
We notice similar differences at $t = 0, 0.3$.
\begin{figure}[t!]
\subfloat{\includegraphics[scale = 0.25]{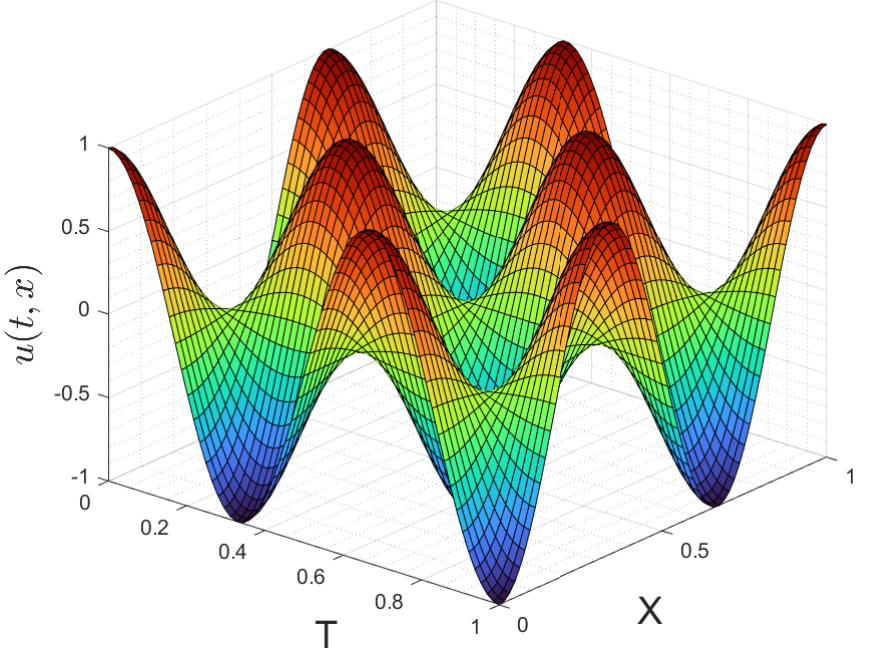}}
\subfloat{\includegraphics[scale = 0.25]{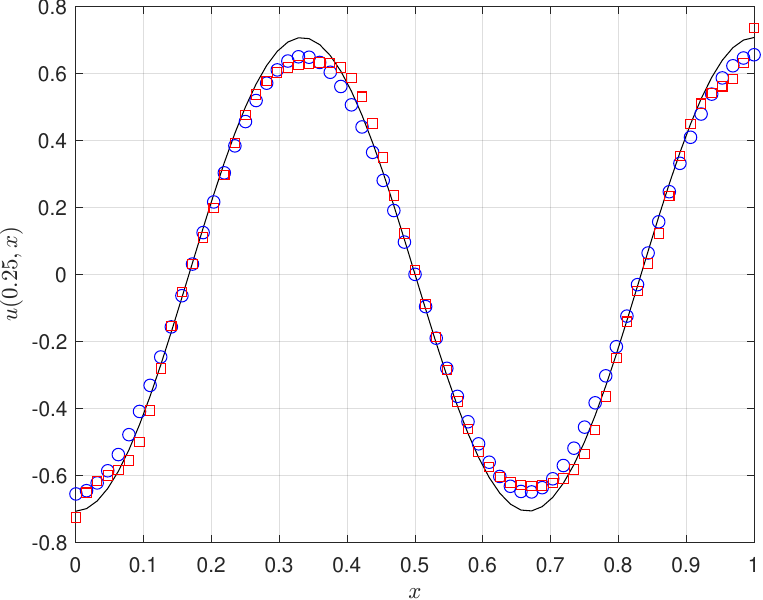}}
\subfloat{\includegraphics[scale = 0.25]{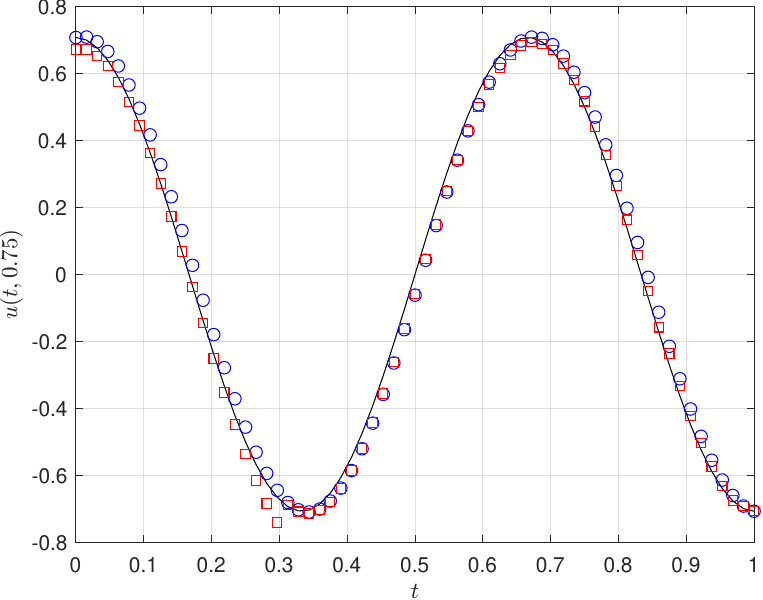}}
\caption{Left: plot of the function $u(t,\x)= \cos(3\pi t)\cos(3\pi x)$.
Middle: Graph of the solution at $t = 0.25$.
Right: time evolution of the solution at $\x = 0.75$. 
(($-$) exact solution, ({\rev  $\Box$}) neural network approximation, ({\color{blue} $\bigcirc$}) SL algorithm.}
\label{fig:utx}
\end{figure}

In \Cref{tab:convergence1} we examine convergence order of the error of respective algorithms;
we measure the error in the maximum norm over the discrete space-time grid (MAX) and in the root mean square norm (RMS) for decreasing discretization parameters $\hhx$ and $ \hht$;
we observe first order of convergence with respect to both discretization parameters.
\begin{table}[b!]
\caption{Experiment 1 -- convergence rate.}\label{tab:convergence1}
{
\centering
\resizebox{.75\textwidth}{!}{
\begin{tabular}{c|c|cc|cc|cc|cc|cc|cc|}
&\multirow{2}{*}{$\triangle$}  & \multicolumn{4}{c|}{FFN} & \multicolumn{4}{c|}{SL-Scheme} 
\\
\hhline{~|~|----|----|}
&&&&&&&&&
\\[-1em]
&&   MAX-error & rate & RMS-error&  rate& MAX-error&  rate & RMS-error &  rate
\\ 
\hline
\hline
&&&&&&&&&
\\[-1em]
\multirow{5}{*}{\rotatebox{90}{Conv. $t$}}
&1.56$\times 10^{-2}$& 7.47$\times 10^{-2}$& -- &3.20$\times 10^{-2}$& -- &6.21$\times 10^{-2}$  & -- &  4.31$\times 10^{-2}$  & -- 
\\
&7.81$\times 10^{-3}$ & 2.76$\times 10^{-2}$& 1.44 & 1.75$\times 10^{-2}$& 0.87 & 2.87$\times 10^{-2}$  & 1.12 &  1.99$\times 10^{-2}$  & 1.12
\\
&3.91$\times 10^{-3}$ & 1.31$\times 10^{-2}$& 1.07 &9.03$\times 10^{-3}$& 0.95 & 1.64$\times 10^{-2}$ & 0.80 & 1.15$\times 10^{-2}$  &  0.79
\\
&1.95$\times 10^{-3}$ &6.67$\times 10^{-3}$& 0.97 &4.44$\times 10^{-3}$& 1.02 & 7.58$\times 10^{-3}$ & 1.11 &  5.26$\times 10^{-3}$  & 1.12
\\
&9.77$\times 10^{-4}$ &4.17$\times 10^{-3}$& 0.68 &2.38$\times 10^{-3}$& 0.90 &3.78$\times 10^{-3}$  & 1.00 &  2.62$\times 10^{-3}$  & 1.00
\\
\hline
\hline
&&&&&&&&&
\\[-1em]
\multirow{5}{*}{\rotatebox{90}{Conv. $x$}}
&1.56$\times 10^{-2}$& 9.31$\times 10^{-2}$& -- &4.26$\times 10^{-2}$& -- &5.67$\times 10^{-2}$ & -- &   4.07$\times 10^{-2}$ & --
\\
&7.81$\times 10^{-3}$ & 2.28$\times 10^{-2}$& 1.44 & 1.54$\times 10^{-2}$& 1.46 & 3.09$\times 10^{-2}$ & 0.88 &  2.14$\times 10^{-2}$  & 0.93
\\
&3.91$\times 10^{-3}$ & 1.32$\times 10^{-2}$& 1.07 &8.33$\times 10^{-3}$& 0.89 &1.54$\times 10^{-2}$  & 1.00 &  1.07$\times 10^{-2}$ & 1.00
\\
&1.95$\times 10^{-3}$ &1.18$\times 10^{-2}$& 0.97 &4.75$\times 10^{-3}$& 0.81 & 8.16$\times 10^{-3}$ & 0.91 &  5.47$\times 10^{-3}$  & 0.96
\\
&9.77$\times 10^{-4}$ &1.29$\times 10^{-2}$& 0.68 &2.82$\times 10^{-3}$& 0.75 &  4.09$\times 10^{-3}$& 1.00 &  2.70$\times 10^{-3}$  & 1.02
\\
\hline
\end{tabular}
}
}
\end{table}

\subsection*{Numerical experiment 2} 
In this experiment we consider a problem with incomplete information. We take $I=2$
and eliminate one probability variable from the solution by parametrizing $\Delta(2) = (\p, 1-\p)$ for $p\in [0,1]$. Hence, we consider the generalized obstacle problem for the transformed solution
\begin{equation}
\label{eq:pde_test2}
\max\Big\{\partial_t u +  a^2\frac12\frac{\partial^2 u}{\partial x^2} + H,\, -\lambda(\cdot,\DD^2_\p u)\Big\} = 0 \mbox{ in } [0,T]\times \mD\times[0,1],
\end{equation}
with the terminal condition $u(T, \x)= 0 $; we take $H(t, \x, \p)= \sin(\pi t)\cos(\pi \x)\sin(3\pi \p)$ and $a(\x)= \x(1-\x)$.

As in the previous experiments we restrict the spatial domain to the interval $[0,1]$ and
approximate \eqref{eq:pde_test2} on a space-time grid with (uniform) mesh sizes $\hhx=1/L$, $\hht=1/N$
and define the discrete process $\oX^{n,\x_\ell}_{n+1}$ analogously as in the previous experiment.
The approximation in the probability variable $\p\in [0,1]$ is constructed on a uniform mesh with mesh size $\hhp=1/M$.
We perform the computations using a Feedforward Neural Network (FFN)  \Cref{algo:exp2_ffn} and a semi-Lagrangian (SL)  \Cref{algo:exp2_sl} which employs piecewise linear interpolation in the spatial variable.

In \Cref{fig:u_test2a} we display the  solution 
computed with $\hht = \hhx = \hhp =  1.56\times 10^{-2}$ at time $t=0$.
To illustrate the effect of the convexity constraint we also display in \Cref{fig:u_test2a} the (nonconvex) solution computed with \Cref{algo:exp2_sl} without the convexification step
(this corresponds to neglecting the obstacle term $\lambda(\cdot,\DD^2_\p u)$ in \eqref{eq:pde_test2} and solving the corresponding unconstrained counterpart of \eqref{eq:pde_test2}).
In \Cref{fig:u_test2b} we display the two dimensional profiles of the numerical solution crossing the point $(\overline{t}, \overline{\x}, \overline{\p}) = (0,25, 0.75, 0.25)$ in the direction of each variable and 
we observe good agreement between the respective algorithms.
In \Cref{tab:exp2_convrate} we display the convergence of the errors decreasing discretization parameters.
{\rev Since to explicit solution to \Cref{eq:pde_test3} is known, we determine the order of convergence using a reference solution computed by \Cref{algo:exp2_ffn} with $\hht=\hhx=\hhp = 4.88\times 10^{-4}$.
} We observe linear rate of convergence with respect to the discretization in all variables.

\begin{algo}[FFN algorithm - Experiment 2]\label{algo:exp2_ffn}
For $\ell = 0,\ldots,L$, $m=0,\dots, M$ set $\ouhkRL(t_N, \x_\ell, p_m)= 0$ and proceed for $n = N-1,\ldots,0$ as follows:
\begin{enumerate}[label = $\circ$]
\item For $m=1,\dots,M$ compute:
\begin{equation*}
\theta_n(\p_m)=\argmin_{\theta\in\Theta_n^\gamma}\frac{1}{L+1}\sum_{\ell = 0}^{L}\big\lvert \ouhkRL(t_{n+1},\x_\ell,\p_m) - \Psi_\kk(\x_\ell;\theta(\p))\big\rvert^2.
\end{equation*}
\item For $\ell = 0,\dots,L$, $m=0,\dots, M$ set:
\begin{equation*}
y(t_n, \x_\ell,\p_m) = \mathbb{E}\big[\Psi_\kk(\oX^{n,\x_\ell}_{n+1};\theta_n(\p_m))\big]   + \hht H(t_{n}, \x_\ell,\p_m).
\end{equation*}
\item For $\ell = 0,\dots,L$, $m=0,\dots, M$ set:
\[
\ouhkRL(t_n, \x_\ell, \p_m) = \vexp\big[y(t_n, \x_\ell, \p_0),\ldots,y(t_n, \x_\ell, \p_M)\big](\p_m).
\]
\end{enumerate}
\end{algo}

\begin{algo}[SL algorithm - Experiment 2]\label{algo:exp2_sl}
For $\ell = 0,\ldots,L$, $m=0,\dots, M$ set $\tuhRL(t_N, \x_\ell, p_m)= 0$ and proceed for $n = N-1,\ldots,0$ as follows:
\begin{enumerate}[label = $\circ$]
\item For $m=1,\dots,M$ define the map $\x\mapsto\tilde{u}^\triangle(t_{n+1},\x,\p_m)$ as the piecewise linear interpolant of $\big\{\tuhRL(t_{n+1},\x_\ell,\p_m)\big\}_{\ell=0}^L$, i.e., for $\x\in[\x_{\ell}, \x_{\ell-1}]$ set
\[
\tilde{u}^\triangle(t_{n+1}, \x, \p_m) = \tuhRL(t_{n+1}, \x_{\ell-1},\p ) + \frac{(\tuhRL(t_{n+1}, \x_{\ell}, \p ) - \tuhRL(t_{n+1}, \x_{\ell-1}, \p))(\x - \x_{\ell-1})}{\hhx}.
\]
\item For $\ell = 0,\dots,L$, $m=0,\dots, M$ set:
\begin{equation*}
y(t_n, \x_\ell,\p_m) = \mathbb{E}[\tilde{u}^\triangle(t_{n+1},\oX^{n,\x_\ell}_{n+1},\p_m)]   + \hht H(t_{n}, \x_\ell,\p_m),
\end{equation*}
\item For $\ell = 0,\dots,L$, $m=0,\dots, M$ set:
\[
\tuhRL(t_n, \x_\ell, \p_m) = \vexp\big[y(t_n, \x_\ell, \p_0),\ldots,y(t_n, \x_\ell, \p_M)\big](\p_m).
\]
\end{enumerate}
\end{algo}

\begin{figure}[t!]
\subfloat{\includegraphics[scale = 0.2]{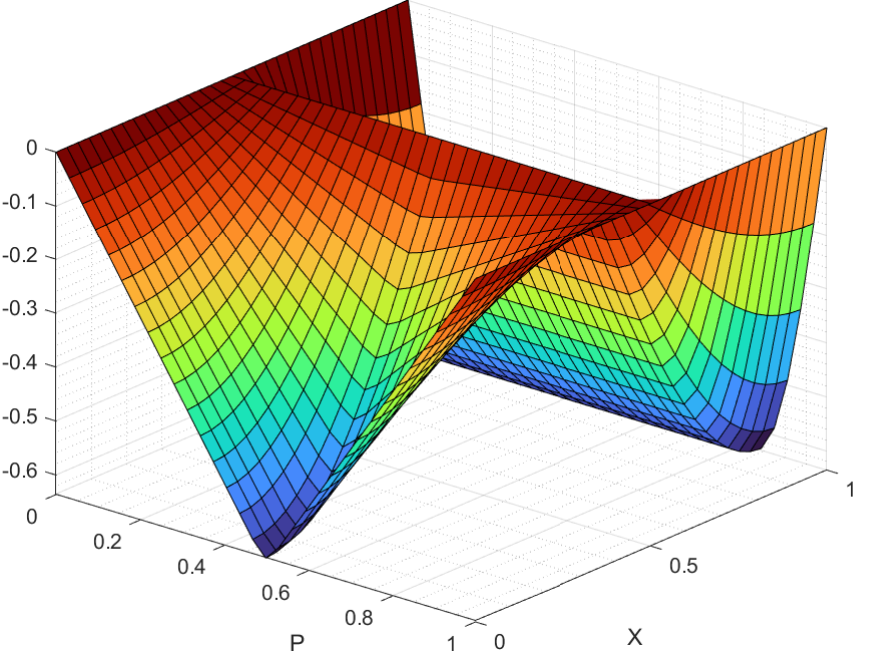}}
\quad
\subfloat{\includegraphics[scale = 0.2]{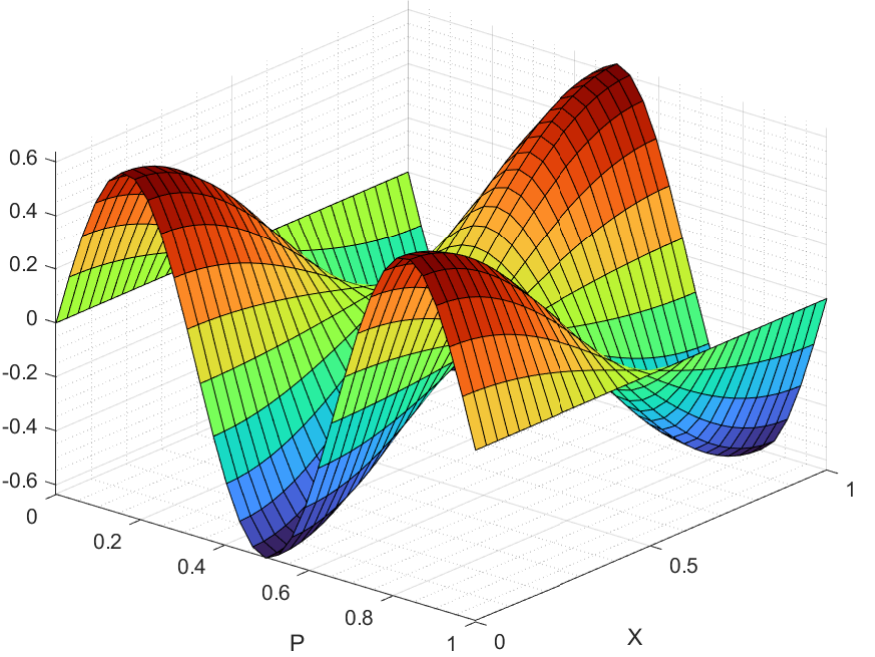}}
\caption{Solution at $t = 0$ computed with the SL algorithm (left) and solution computed without the obstacle term $\lambda(\p,\DD^2_\p u)$ (right).}
\label{fig:u_test2a}
\end{figure}

\begin{figure}[t!]
\subfloat{\includegraphics[scale = 0.2]{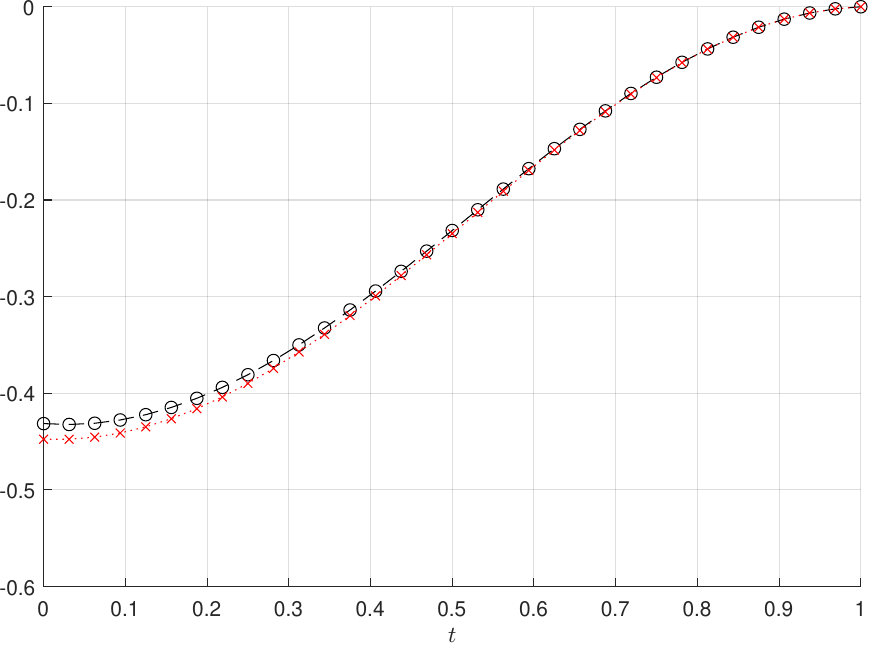}}
\subfloat{\includegraphics[scale = 0.2]{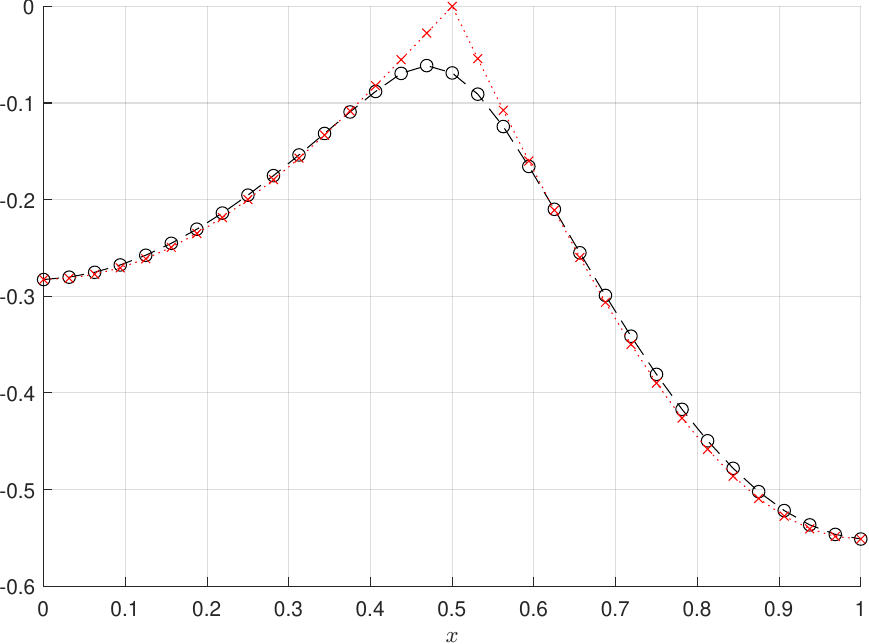}}
\subfloat{\includegraphics[scale = 0.2]{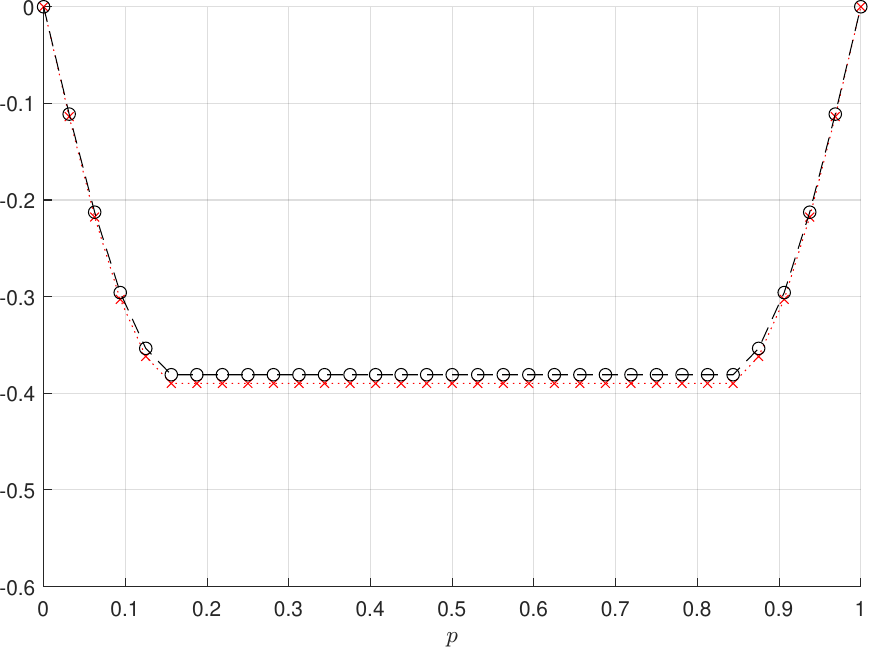}}
\caption{Profiles of the solution computed using the FFN algorithm ({\rev $ +$}) and the SL algorithm ($\circ$) at $(\x,\p) = (0.75, 0.25)$ (left), $(t,\p) = (0.25, 0.25)$ (middle), $(t,\x) = (0.25, 0.75)$ (right)}
\label{fig:u_test2b}
\end{figure}

\begin{table}[b!]
\caption{Experiment 2 -- convergence rate.}\label{tab:exp2_convrate}
{
\centering
\resizebox{.7\textwidth}{!}{
\begin{tabular}{c|c|cc|cc|cc|cc|}
&\multirow{2}{*}{$\triangle$}  & \multicolumn{4}{c|}{FFN} & \multicolumn{4}{c|}{SL-Scheme} 
\\
\hhline{~|~|----|----|}
&&&&&&&&&   
\\[-1em]
&&   MAX-error & rate & RMS-error&  rate& MAX-error&  rate & RMS-error &  rate
\\ 
\hline
\hline
&&&&&&&&&   
\\[-1em]
\multirow{5}{*}{\rotatebox{90}{Conv. $t$}}
&3.13$\times 10^{-2}$& 9.63$\times 10^{-3}$& -- &6.51$\times 10^{-3}$& -- &9.47$\times 10^{-3}$  & -- &  6.05$\times 10^{-3}$  & -- 
\\
&1.56$\times 10^{-2}$ & 5.04$\times 10^{-3}$& 0.94 & 3.48$\times 10^{-3}$& 0.90 & 4.60$\times 10^{-3}$  & 1.04 &  3.10$\times 10^{-3}$  & 0.96
\\
&7.81$\times 10^{-3}$ & 2.51$\times 10^{-3}$& 1.00 & 1.75$\times 10^{-3}$& 0.99 & 2.43$\times 10^{-3}$ & 0.92 & 1.65$\times 10^{-3}$  &  0.91
\\
&3.91$\times 10^{-3}$ &1.19$\times 10^{-3}$& 1.08 & 8.35$\times 10^{-4}$& 1.07 & 1.06$\times 10^{-3}$ & 1.20 &  7.13$\times 10^{-4}$  & 1.21
\\
&1.95$\times 10^{-3}$ &5.13$\times 10^{-4}$& 1.21 & 3.61$\times 10^{-4}$& 1.20 &  4.86$\times 10^{-4}$ & 1.21  & 3.29$\times 10^{-4}$ & 1.11
\\
\hline
\hline
&&&&&&&&&   
\\[-1em]
\multirow{5}{*}{\rotatebox{90}{Conv. $x$}}
&3.13$\times 10^{-2}$& 7.77$\times 10^{-3}$& -- & 4.56$\times 10^{-3}$& -- &1.13$\times 10^{-2}$  & -- &  4.59$\times 10^{-3}$  & -- 
\\
&1.56$\times 10^{-2}$ & 4.58$\times 10^{-3}$& 0.76 & 2.51$\times 10^{-3}$& 0.86 & 4.57$\times 10^{-3}$  & 1.30 &  1.46$\times 10^{-3}$  & 1.65
\\
&7.81$\times 10^{-3}$ & 2.40$\times 10^{-3}$& 0.93 & 1.37$\times 10^{-3}$& 0.87 & 3.33$\times 10^{-3}$ & 0.46 & 1.43$\times 10^{-3}$  &  0.03
\\
&3.91$\times 10^{-3}$ & 1.61$\times 10^{-3}$& 1.05 & 6.58$\times 10^{-4}$& 1.06 & 1.16$\times 10^{-3}$ & 1.52 &  4.11$\times 10^{-4}$  & 1.80
\\
&1.95$\times 10^{-3}$ & 5.07$\times 10^{-4}$& 1.20 & 2.86$\times 10^{-4}$& 1.20  & 5.05$\times 10^{-4}$ &  1.20  & 2.24$\times 10^{-4}$ & 0.88
\\
\hline
\hline
&&&&&&&&&   
\\[-1em]
\multirow{5}{*}{\rotatebox{90}{Conv. $p$}}
&3.13$\times 10^{-2}$& 7.35$\times 10^{-3}$& -- & 5.35$\times 10^{-3}$& -- &3.34$\times 10^{-3}$  & -- &  1.84$\times 10^{-3}$  & -- 
\\
&1.56$\times 10^{-2}$ & 3.69$\times 10^{-3}$& 0.99 & 3.02$\times 10^{-3}$& 0.82 & 2.10$\times 10^{-3}$  & 0.67 &  1.59$\times 10^{-3}$  & 0.21
\\
&7.81$\times 10^{-3}$ & 1.81$\times 10^{-3}$& 1.03 & 1.56$\times 10^{-3}$& 0.95 & 1.52$\times 10^{-3}$ & 0.47 & 1.30$\times 10^{-3}$  &  0.30
\\
&3.91$\times 10^{-3}$ & 8.50$\times 10^{-4}$& 1.09 & 7.53$\times 10^{-4}$& 1.05 & 4.18$\times 10^{-4}$ & 1.85 &  3.61$\times 10^{-4}$  & 1.85
\\
&1.95$\times 10^{-3}$ &3.65$\times 10^{-4}$& 1.22 & 3.28$\times 10^{-4}$& 1.20  & 2.69$\times 10^{-4}$ &  0.64 & 2.41$\times 10^{-4}$ & 0.58
\\
\hline
\end{tabular}
}
}
\end{table}

\subsection*{Numerical experiment 3} 
We consider the full problem \eqref{eq:hjb} with $I=2$.
As in the previous experiment we eliminate one probability variable from the solution by parametrizing $\Delta(2) = (\p, 1-\p)$ for $\p\in (0,1)$ and consider the following obstacle problem
\begin{equation}
\label{eq:pde_test3}
\begin{split}
\max\Big\{\max\Big\{\min\Big\{&\partial_t u +  a^2\frac12\frac{\partial^2 u}{\partial x^2} +   b\frac{\partial u}{\partial x},\,
\\
&u - \p f_1 - (1- \p)f_2\Big\},\,
u - \p h_1 - (1- \p)h_2\Big\},\, -\lambda(\cdot,\DD^2_\p u)\Big\}\Big\} = 0,
\end{split}
\end{equation}
in $[0,T]\times\RR\times [0,1]$ with a terminal condition $u(T, \x,\p)= \p^\T g(\x) $, $g:=(g_1, g_2)$ with $g_i(\x) = \tilde{g}(\x) :=  \max\{2 - \ee^\x,\, 0\}$, $i=1,2$.
The obstacles are chosen as $h_i := g_i(\x) +\delta_i$, $i = 1, 2$ with $\delta_1 = 1.25\times 10^{-1}$, $\delta_2 = 6.5\times 10^{-2}$ and $f_i(\x) = g_i(\x)$, $i = 1, 2$, i.e., the lower obstacle in \cref{eq:pde_test3} does not depend on $\p$.
Furthermore, we take $b= (r-a^2/2)$,  $T=1$, $a = 2\times 10^{-1}$, $r = 3\times 10^{-2}$.
The above problem setup is motivated by the Dynkin game problem (without asymmetric information), considered in \cite{KuhnKyprianou}, which arises in the pricing of Israeli $\delta$-penalty put options. 
In this example, asymmetric information arises because the holder of the option does not know exactly how much the writer of the option is going to pay her upon termination of the contract, but she has only knowledge on the a priori probability according to which the additional payment $\delta_i$ is initially chosen. 
Our numerical study provides the structure of the game's state space and depict the equilibrium stopping boundaries, which is of importance for understanding the financial meaning of the equilibrium.

The discretization is performed as in the previous experiment.
We restrict the spatial domain to the interval $[0,1]$ and partition the solution domain $[0,T]\times[0,1]\times [0,1]$ uniformly in each variable with the respective mesh sizes $\hht = 1/N$, $\hh\x = 1/L$, $\hhp = 1/M$.
In the algorithm below we consider the following Euler approximation of the diffusion process associated with the problem \eqref{eq:pde_test3}:
\begin{equation*}
\oX^{n,\x_\ell}_{n+1} = \x_\ell+ b{\hht} + a\xi^n_\ell\sqrt{\hht} \qquad n=0,\ldots,N,\,\,\ell=0,\ldots,L,
\end{equation*}
where $\{\xi^n_\ell\}$ are i.i.d random variables that take values $\{-1, 1\}$ with equal probability.
The computations in this section are performed using the freedforward neural network algorithm \Cref{algo:exp3_ffn} where
we employ a fully connected Feedforward Neural Network that consists of one hidden layer with  $50$ neurons.
We choose ${\tt tanh}$, i.e.
\begin{equation*}
	{\tt tanh}:\x\mapsto
	\frac{\ee^{\x}- \ee^{-\x}}{\ee^{\x} + \ee^{-\x}}.
\end{equation*} as the activation function for the hidden layers and identity function as activation function for the output layer.
We use the  limited-memory (BFGS) quasi-Newton algorithm \cite{gill1989a,liu1989on} to solve the nonlinear least-squares problem at each time-step of the algorithm.

\begin{algo}[FFN - Experiment 3]\label{algo:exp3_ffn}
For $\ell = 0,\ldots,L$, $m=1,\dots,M$, we initialize $\ouhkRL(t_N, \x_\ell, \p_m)=  \p_m^\T g(\x_\ell)$ and proceed for $n = N-1,\ldots,0$  as follows:
\begin{enumerate}[label = $\circ$]
\item For $m=1,\dots,M$ compute:
\begin{equation*}
\theta_n(\p_m)=\argmin_{\theta\in\Theta_n^\gamma}\frac{1}{L+1}\sum_{\ell = 0}^{L}\big\lvert \ouhkRL(t_{n+1},\x_\ell,\p_m) -  \Psi_\kk(\x_\ell;\theta)\big\rvert^2.
\end{equation*}
\item For $\ell = 0,\dots,L$, $m=0,\dots, M$ set:
\begin{equation*}
y(t_n, \x_\ell, \p_m) = \mathbb{E}[\Psi_\kk(\oX^{n,\x_\ell}_{n+1};\theta_n(\p_m))].
\end{equation*}
\item For $\ell = 0,\dots,L$, $m=0,\dots, M$ set:
\begin{equation*}
\begin{split}
\ovhkRL(t_n, \x_\ell, \p_m) = \min\big\{\max\big\{y(t_n, \x_\ell, \p_m),\, &\p_m f_1(\x_\ell) + (1- \p_m)f_2(\x_\ell)\big\},\, 
\p_m h_1(\x_\ell) + (1- \p_m)h_2(\x_\ell)\big\}.
\end{split}
\end{equation*}
\item For $\ell = 0,\dots,L$, $m=0,\dots, M$ set:
\[
\ouhkRL(t_n, \x_\ell, \p_m) = \vexp\big[\ovhkRL(t_n,\x_\ell,\p_0),\ldots,\ovhkRL(t_n,\x_\ell,\p_M)\big](\p_m).
\]
\end{enumerate}
\end{algo}

To illustrate the effect of the convexity constraint in \cref{eq:pde_test3} we compute the solution
$v= v(t, \x, \p)$ for $\p=0,0.5,1$ of the non-constrained problem
\begin{equation}
\label{eq:pde_test3_nonconvex}
\begin{split}
\max\Big\{\min\Big\{&\partial_t v +  a^2\frac12\frac{\partial^2 v}{\partial x^2} +   b\frac{\partial v}{\partial x},\,
v - \p f_1 - (1- \p)f_2\Big\},\,
v - \p h_1 - (1- \p)h_2\Big\}\Big\}  = 0.
\end{split}
\end{equation}
In \Cref{fig:up1p2}  we compare the solution $\ovhkRL$ of \cref{eq:pde_test3_nonconvex} to the solution $\ouhkRL$ of \cref{eq:pde_test3}.
In the top row in \Cref{fig:up1p2} we plot the difference $\frac12(\ovhkRL(t,\x,0) + \ovhkRL(t,\x,1)) - \ovhkRL(t,\x,0.5)$ which takes negative values since  $\ovhkRL$ is not necessarily convex in the $\p$ variable,
the difference $\frac12(\ouhkRL(t,\x,0) + \ouhkRL(t,\x,1)) - \ouhkRL(t,\x,0.5)$ in the bottom row remains positive since $\ouhkRL$ is convex in $\p$.
\begin{figure}[t!]
\subfloat{\includegraphics[scale = 0.2]{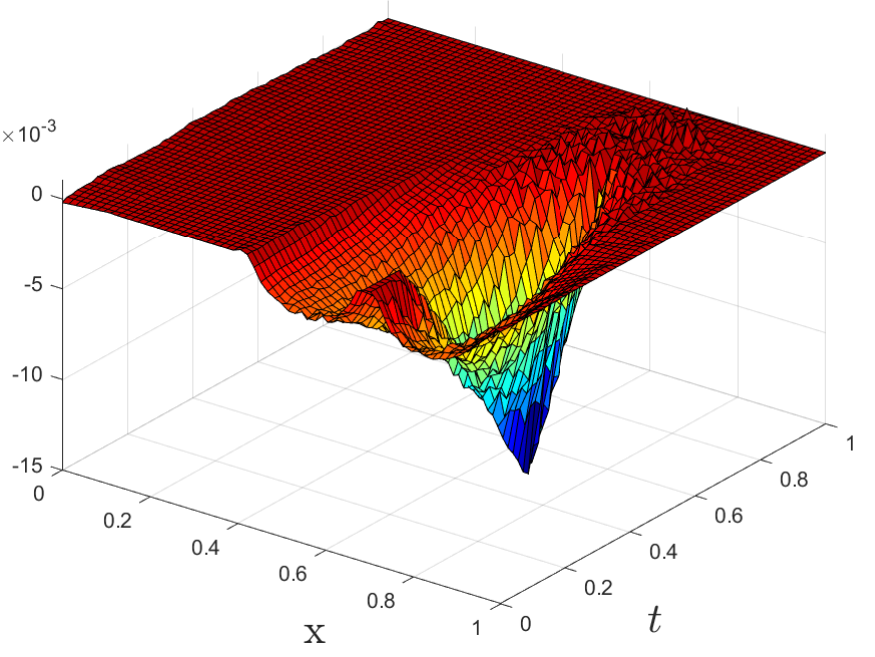}}
\quad
\subfloat{\includegraphics[scale = 0.2]{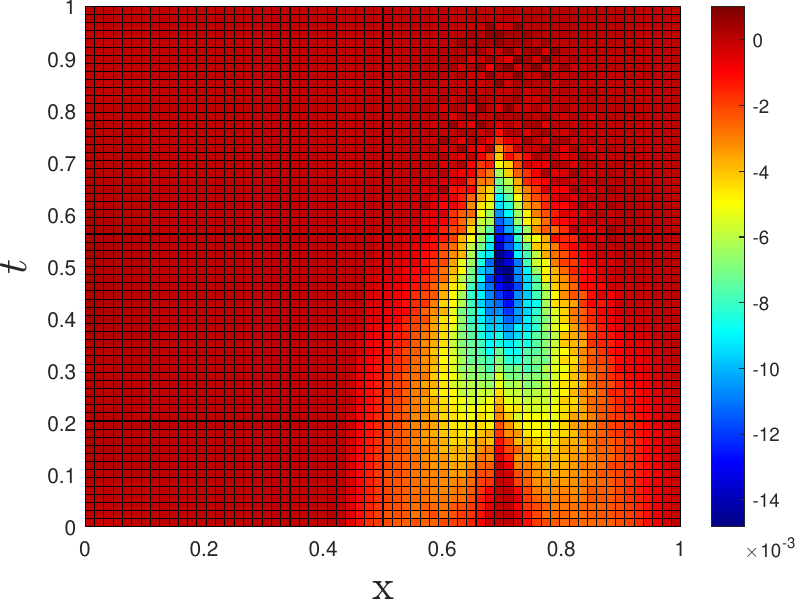}}
\quad\quad
\subfloat{\includegraphics[scale = 0.2]{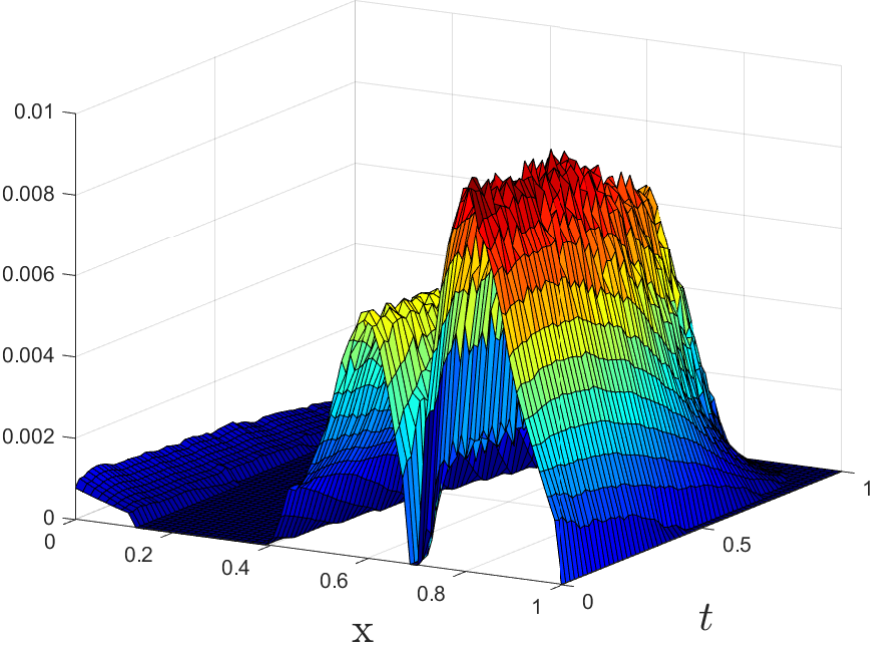}}
\quad
\subfloat{\includegraphics[scale = 0.2]{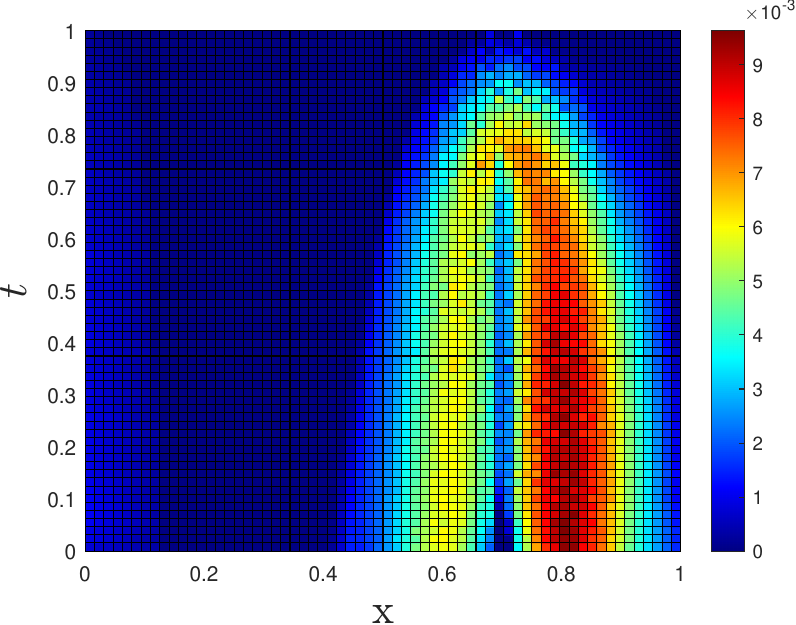}}
\caption{From left to right:
plot of $\tfrac12(\ovhkRL(t,\x,0) + \ovhkRL(t,\x,1)) - \ovhkRL(t,\x,0.5)$ (3d and top view) and of $\tfrac12(\ouhkRL(t,\x,0) + \ouhkRL(t,\x,1)) - \ouhkRL(t,\x,0.5)$ (3d and top view).}
\label{fig:up1p2}
\end{figure}

In \Cref{fig:freeBoundary}  we display space-time graphs of the numerical solution computed with $\hht = \hhx =\hhp = 1.56\times 10^{-2}$ for different values of $p$ along with the regions where the two obstacles are active; the red color represents the active region of the lower obstacle
$\{(t,\x); \,\, \ouhkRL(t,\x,\p) = \p f_1(\x) + (1- \p)f_2(\x) = \tilde{g}(\x)\}$ and the green color represents the active region of the upper obstacle $\{(t,\x);\,\,\ouhkRL(t,\x,\p) = \p h_1(\x) + (1- \p)h_2(\x)\}$.
In all figures the regions were marked as active if the distance of the numerical solution to the obstacle at the nodes $\x_\ell$ was below a tolerance $2\times 10^{-5}$.
\begin{figure}[hbtpt!]
	\subfloat{\includegraphics[width = 0.24\textwidth]{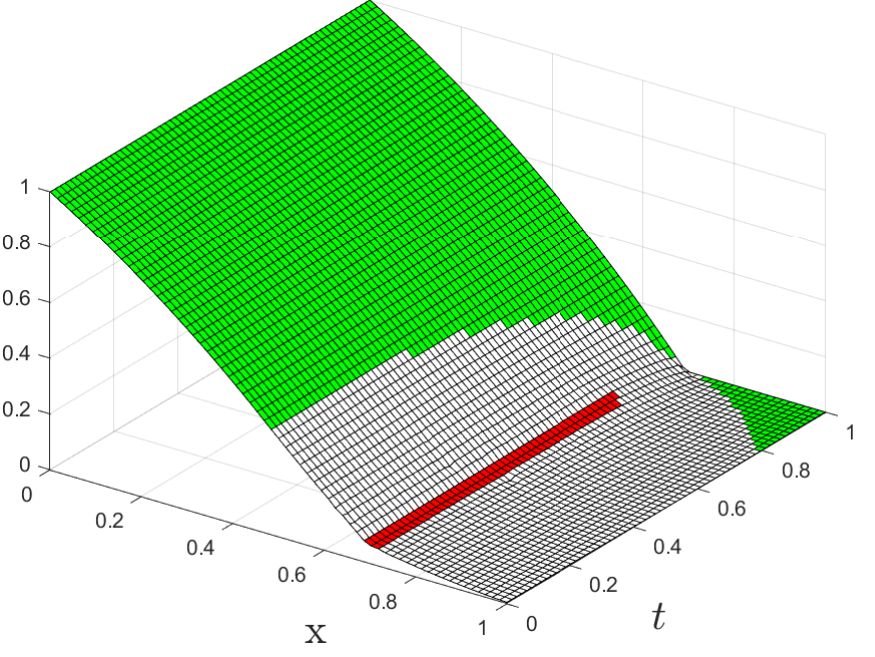}}
	\subfloat{\includegraphics[width = 0.24\textwidth]{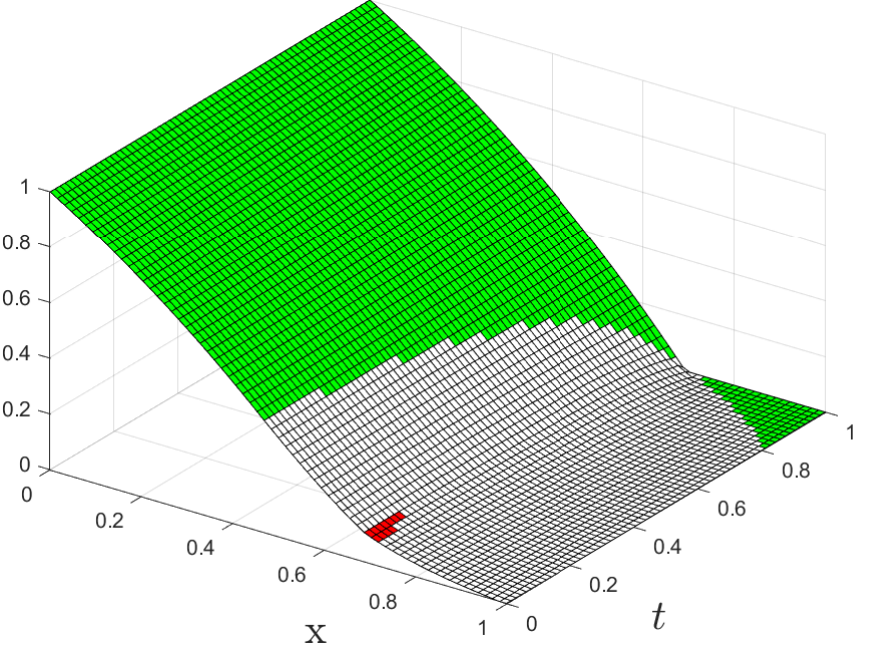}}
	\subfloat{\includegraphics[width = 0.24\textwidth]{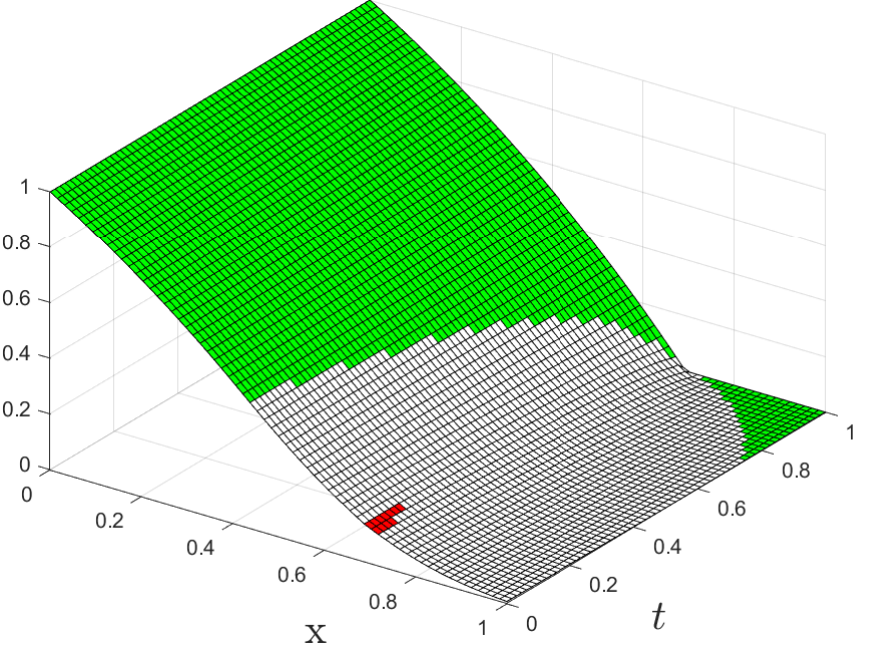}}
	\subfloat{\includegraphics[width = 0.24\textwidth]{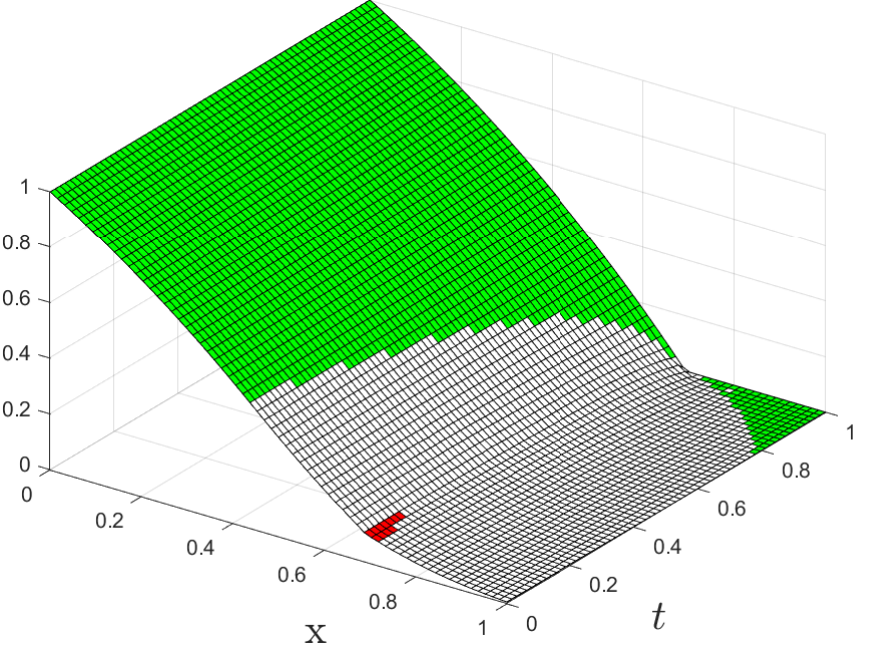}}
\\
	\subfloat{\includegraphics[width = 0.24\textwidth]{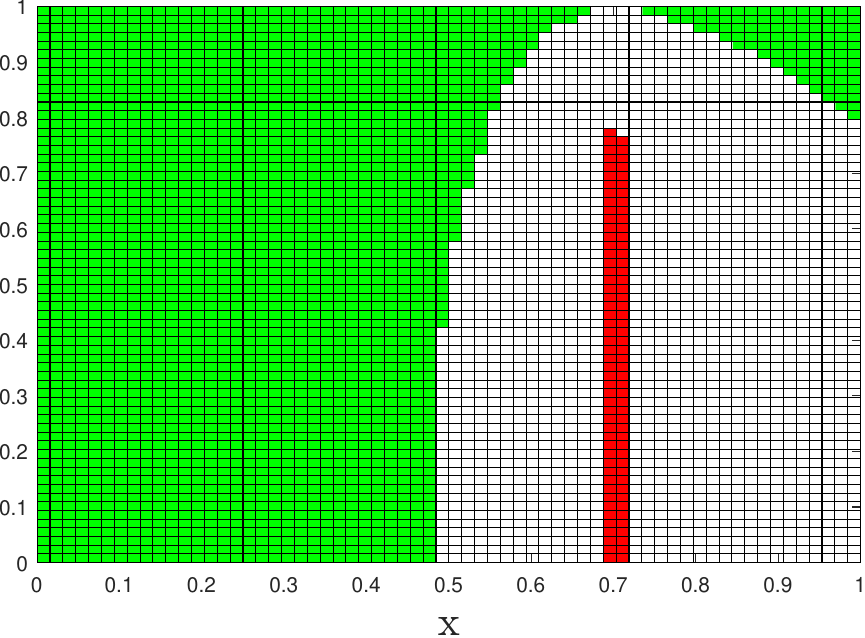}}
	\subfloat{\includegraphics[width = 0.24\textwidth]{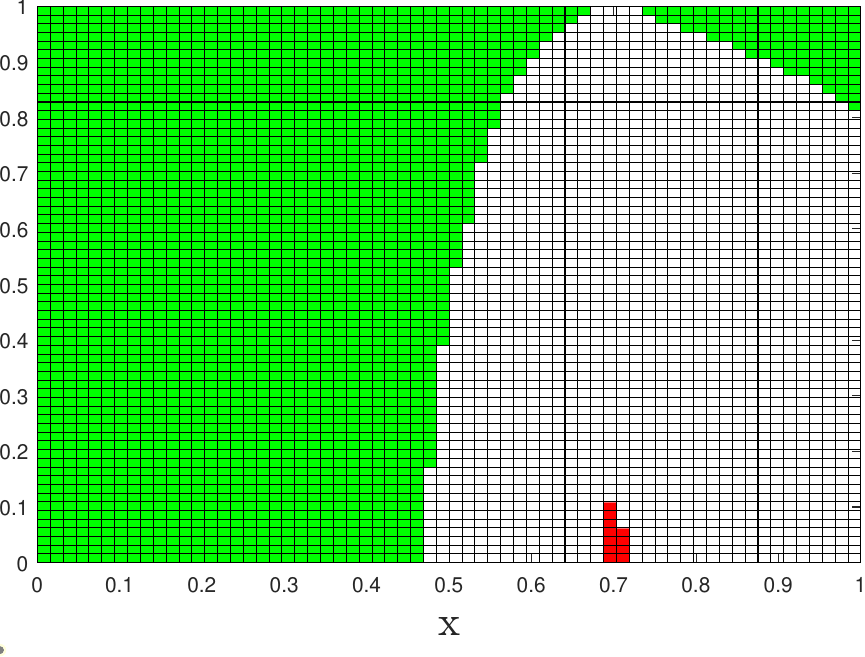}}
	\subfloat{\includegraphics[width = 0.24\textwidth]{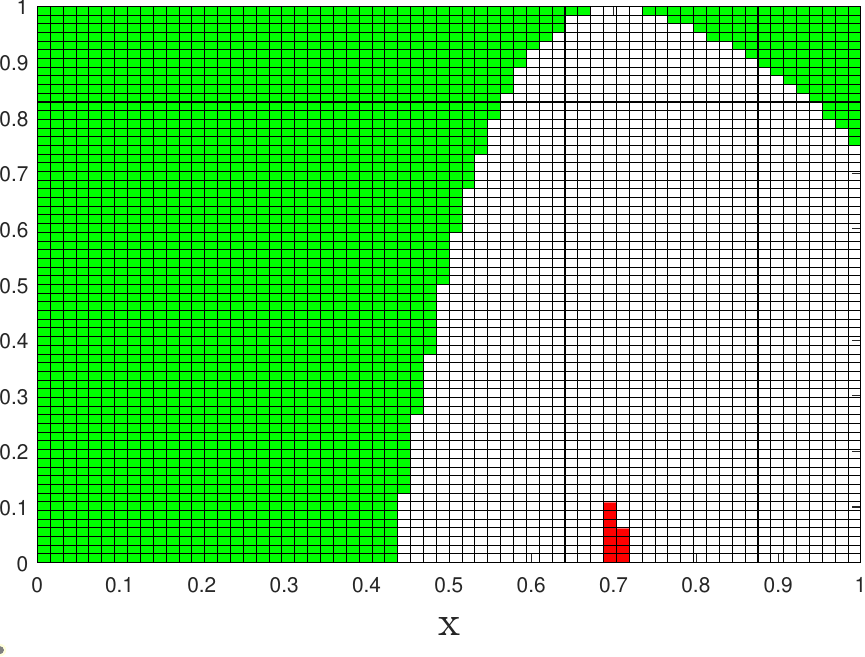}}
	\subfloat{\includegraphics[width = 0.24\textwidth]{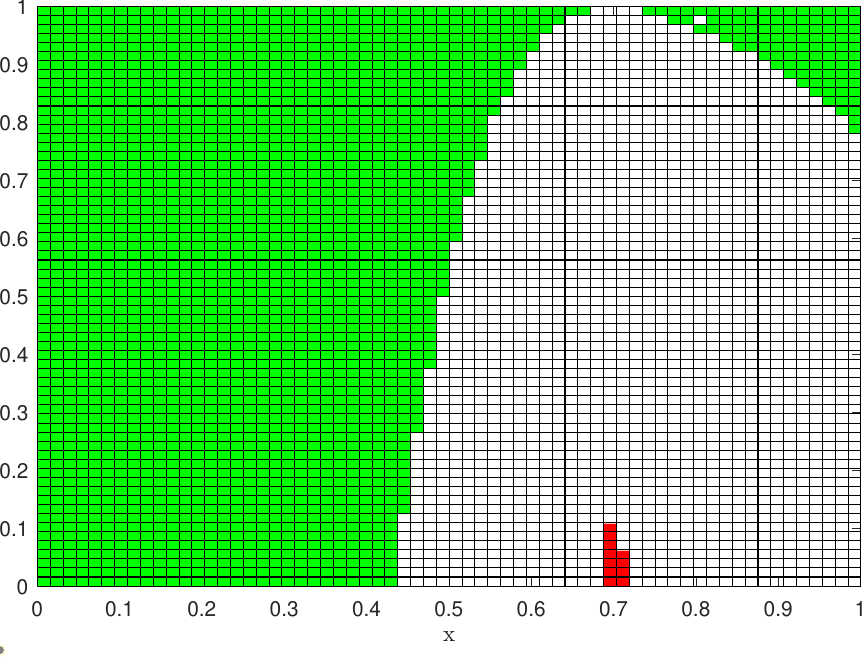}}
	\caption{Results for the BR solver: 3d view (top row) and top view (bottom row) of the numerical solution $(t,\x) \rightarrow \ouhkRL(t,\x,\p)$  (from left to right) for $p=0,0.5,1$
		and of the average $(t,\x)\rightarrow \frac12 (\ouhkRL(t,\x,0) + \ouhkRL(t,\x,1))$ with obstacles respectively represented by  $\frac12 f_1(\x) + \frac12f_2(\x)$, $\frac12 h_1(\x) + \frac12h_2(\x)$.
	}
	\label{fig:test4_freeBoundary}
\end{figure}

\begin{figure}[hbtpt!]
\subfloat{\includegraphics[width = 0.24\textwidth]{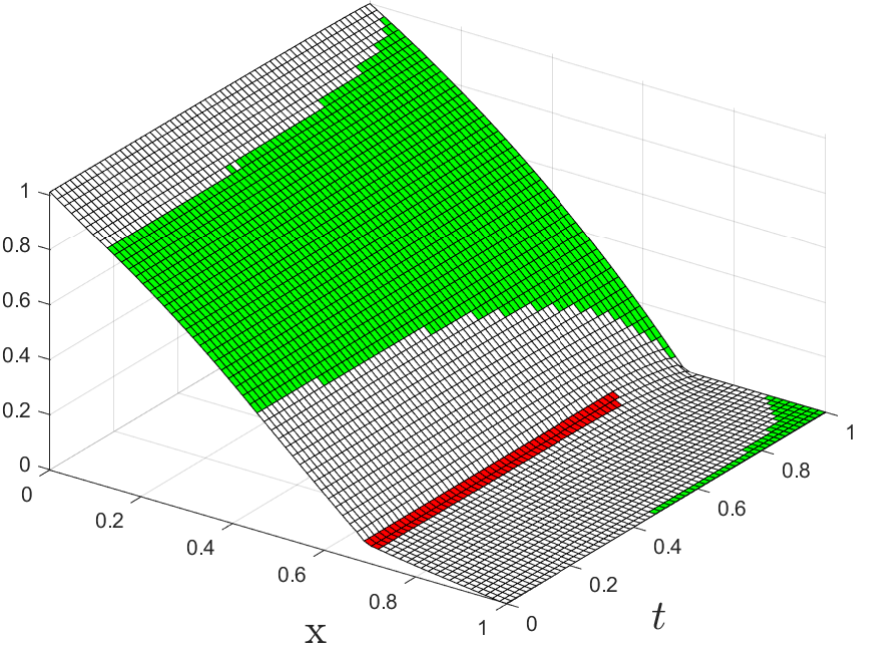}}
\subfloat{\includegraphics[width = 0.24\textwidth]{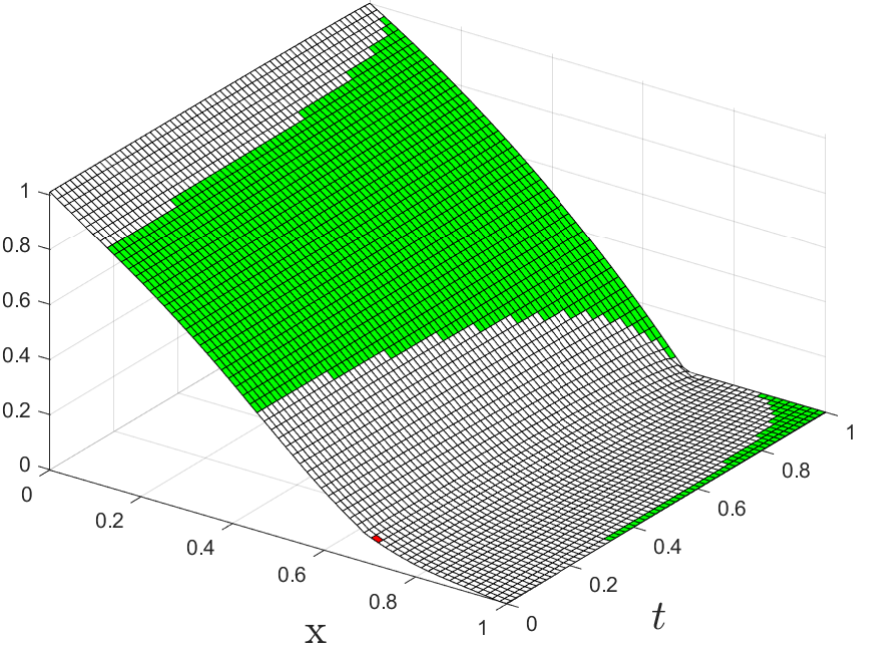}}
\subfloat{\includegraphics[width = 0.24\textwidth]{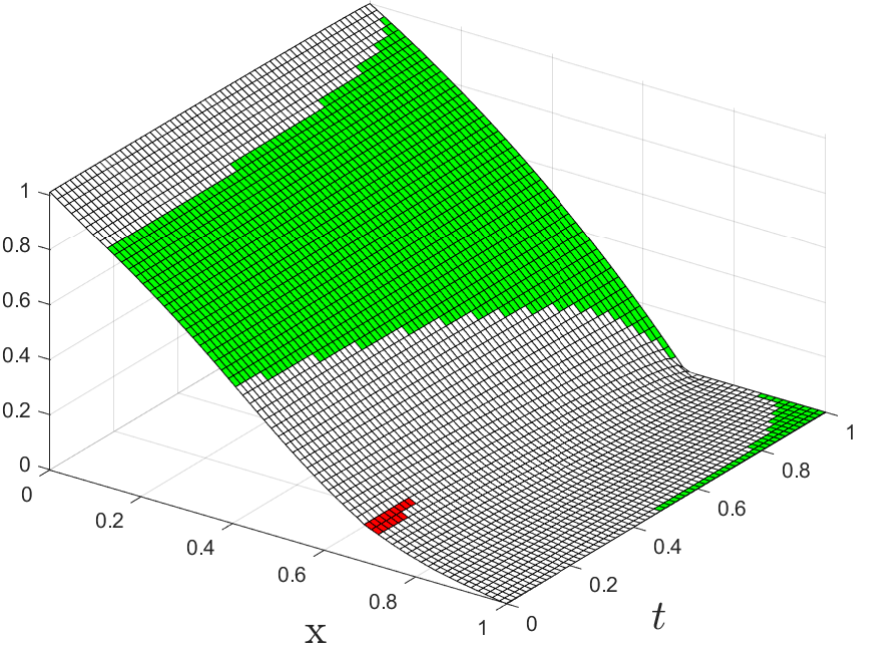}}
\subfloat{\includegraphics[width = 0.24\textwidth]{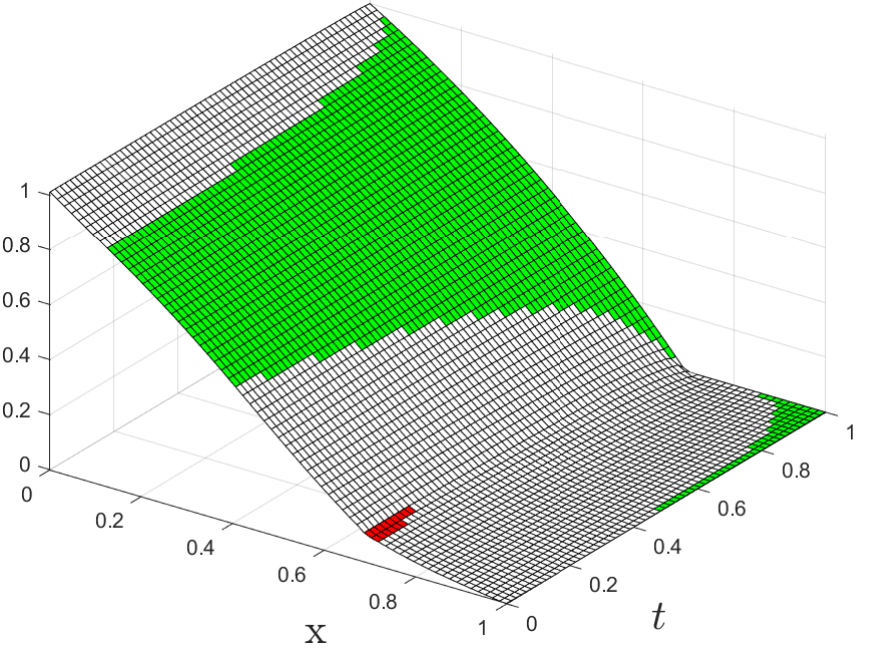}}
\\
\subfloat{\includegraphics[width = 0.24\textwidth]{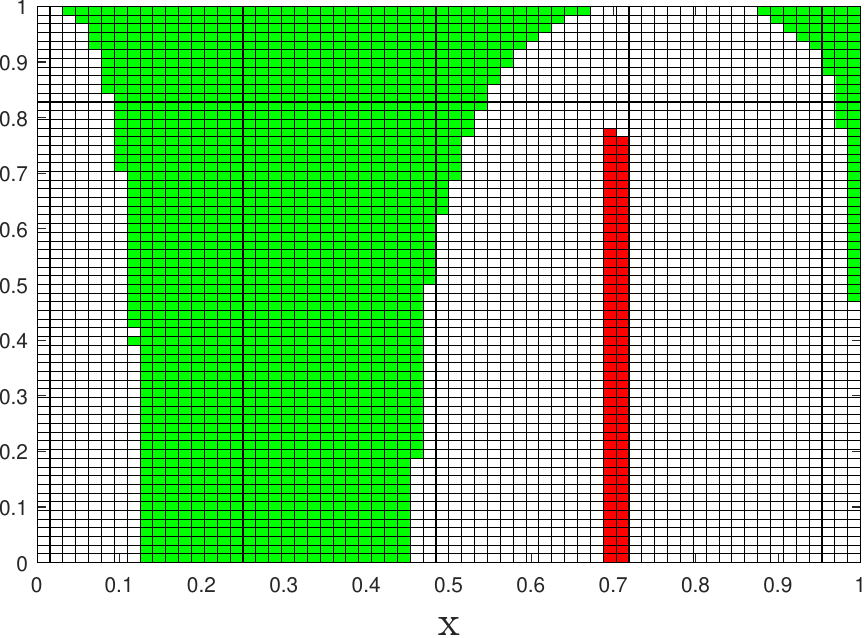}}
\subfloat{\includegraphics[width = 0.24\textwidth]{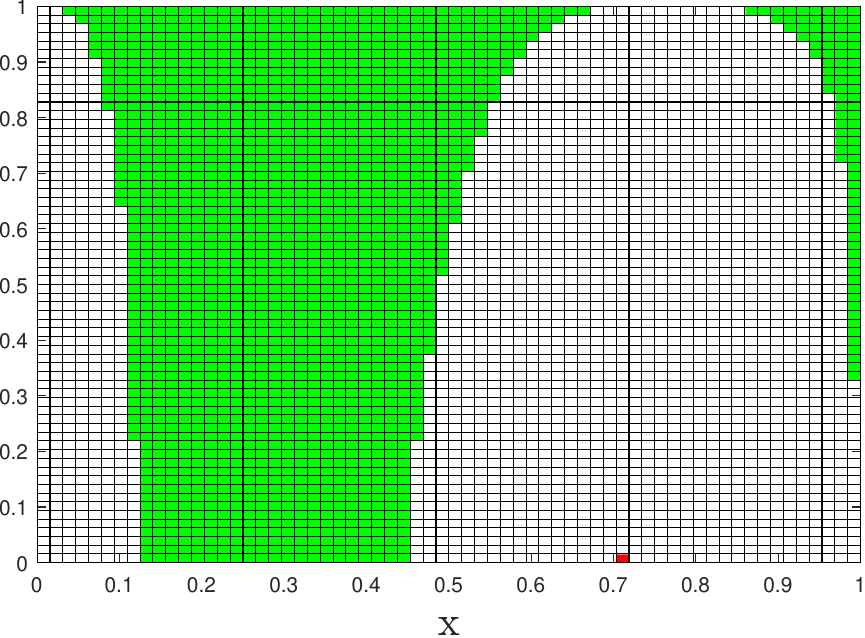}}
\subfloat{\includegraphics[width = 0.24\textwidth]{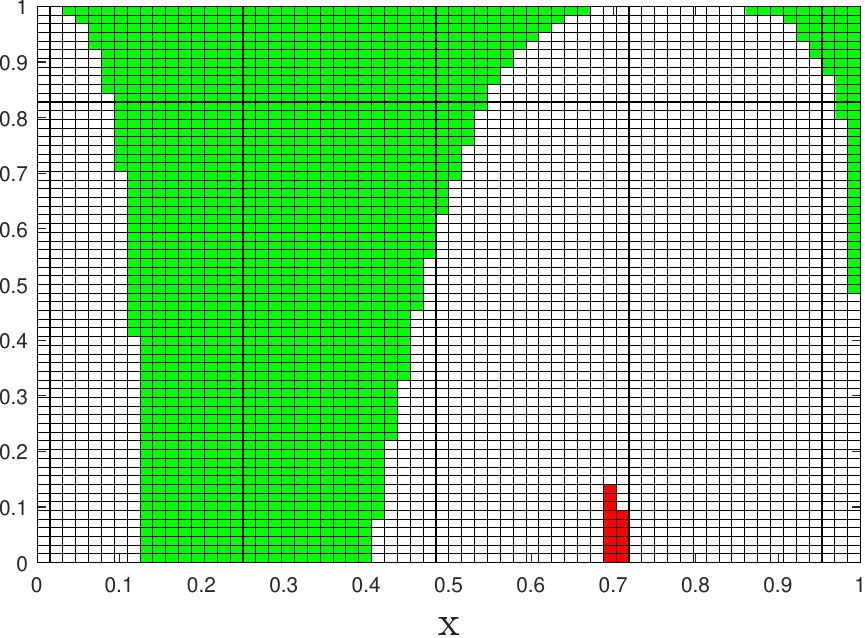}}
\subfloat{\includegraphics[width = 0.24\textwidth]{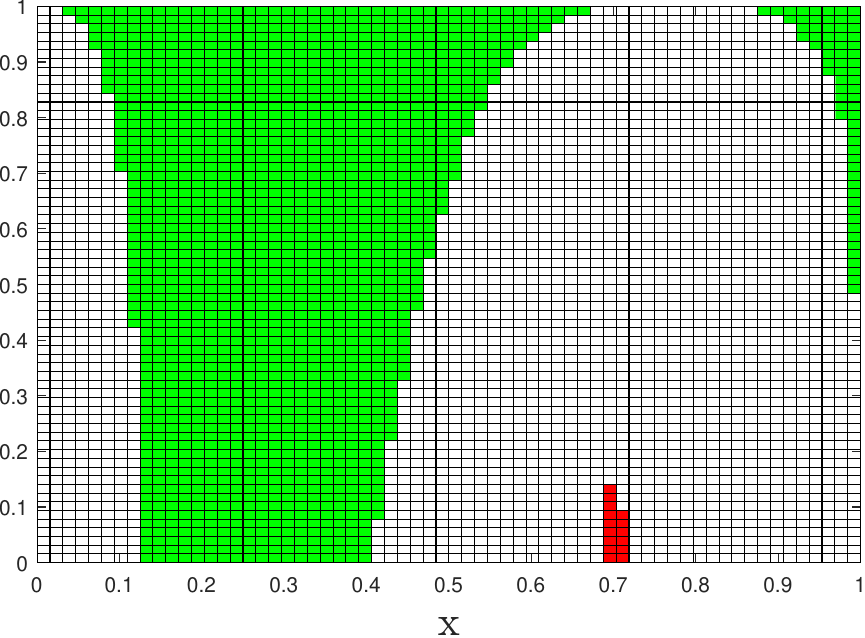}}
\caption{Results for the BFGS solver: 3d view (top row) and top view (bottom row) of the numerical solution $(t,\x) \rightarrow \ouhkRL(t,\x,\p)$ (from left to right) for $p=0,0.5,1$ and of the average $(t,\x)\rightarrow \frac12 (\ouhkRL(t,\x,0) + \ouhkRL(t,\x,1))$ with obstacles respectively represented by  $\frac12 f_1(\x) + \frac12f_2(\x)$, $\frac12 h_1(\x) + \frac12h_2(\x)$.
}
\label{fig:freeBoundary}
\end{figure}

The results in \Cref{fig:freeBoundary} show that the writer of the contract would exercise when the price of the underlying stock assumes value $\ln(K) \sim 0.7$. This is consistent with the results in \cite{KuhnKyprianou}. Also, the buyer would stop when the price of the stock is sufficiently low. This is also consistent to the findings of \cite{KuhnKyprianou}. Interestingly, we also see from \Cref{fig:freeBoundary} that the waiting region (gray area) is not connected, a feature which is not observed in the symmetric information case of \cite{KuhnKyprianou}. In particular, such a feature of the waiting region does not disappear when $p \downarrow 0$ or $p\uparrow 1$, i.e.\ in the two symmetric information cases. We repeated the experiment for different combinations of the discretization parameters but the obtained results were qualitatively very similar to those in \Cref{fig:freeBoundary}. We believe that the presence of a portion of the waiting region for small values of $x$ is a numerical artefact, depending on the accuracy of the nonlinear solver used for the neural network approximation. As a matter of fact, the waiting region is observed to be connected (for $p=0$, i.e.\ in the symmetric information case) in \Cref{fig:freeBoundary_exp4}, the latter being produced by using a (BR) nonlinear solver or a semi-Lagrangian scheme, for which we have higher accuracy and better convergence. From the obtained results we conclude that sufficient accuracy of the nonlinear solver used used to determine the neural network approximation of free boundaries is required in order to obtain reliable results.

In \Cref{fig:test4_freeBoundary}, we repeat the simulation with the same discretization parameters as above $\hht = \hhx = \hhp = 1.56\times 10^{-2}$
and employ a Feedforward Neural Network with one hidden layer and $10$ neurons (opposed to the $50$ neurons used to compute \cref{fig:freeBoundary} above) with the difference that we
solve the least-squares problem in \Cref{algo:exp3_ffn} using a Bayesian regularization (BR) algorithm (see \cite{foresee1997foresee,mackay1992bayesian}).
{\rev We remark that the good performance of the BR nonlinear solver is due to the built-in regularization property, where the regularization parameters are determined using the Bayesian rules, see \cite{mackay1992bayesian}.}
The results displayed in \Cref{fig:freeBoundary_exp4} for $\p=0$ reproduce the expected behavior of the free boundary.
For comparison in  \Cref{fig:freeBoundary_exp4} we also display the numerical solution obtained by a semi-Lagrangian algorithm  
with piecewise linear interpolation (i.e., a modification of \Cref{algo:exp3_ffn} which employs linear interpolation instead of the least-squares approximation, cf. \Cref{algo:exp2_sl}) for $\p=0$;
the results are in good qualitative agreement.
{\rev Comparison between the respective solutions of the SL and BR schemes at $\p=0$ is given in \Cref{fig:linear_test3_test4}.}

\begin{figure}[t!]
\subfloat{\includegraphics[width = 0.24\textwidth]{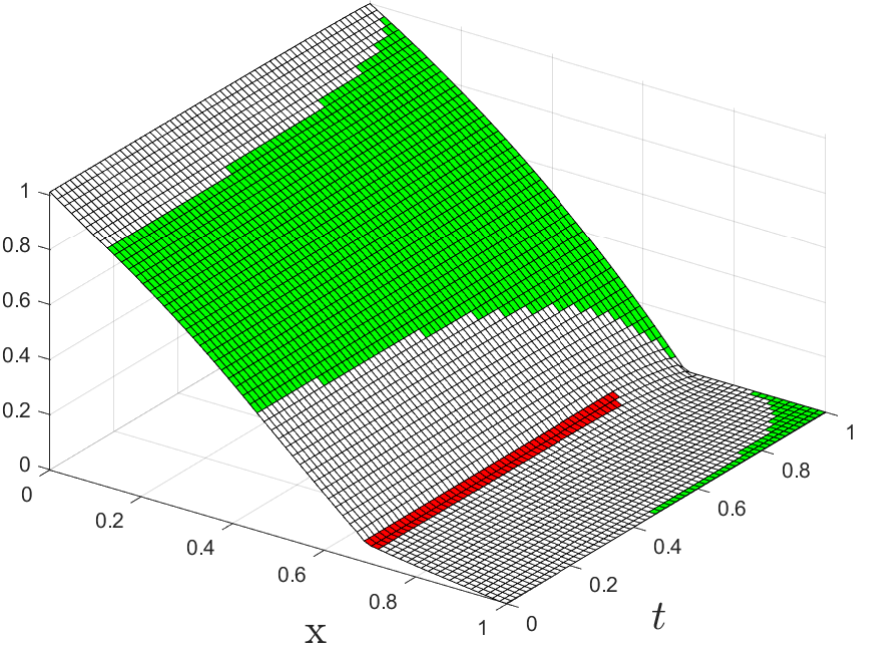}}
\subfloat{\includegraphics[width = 0.24\textwidth]{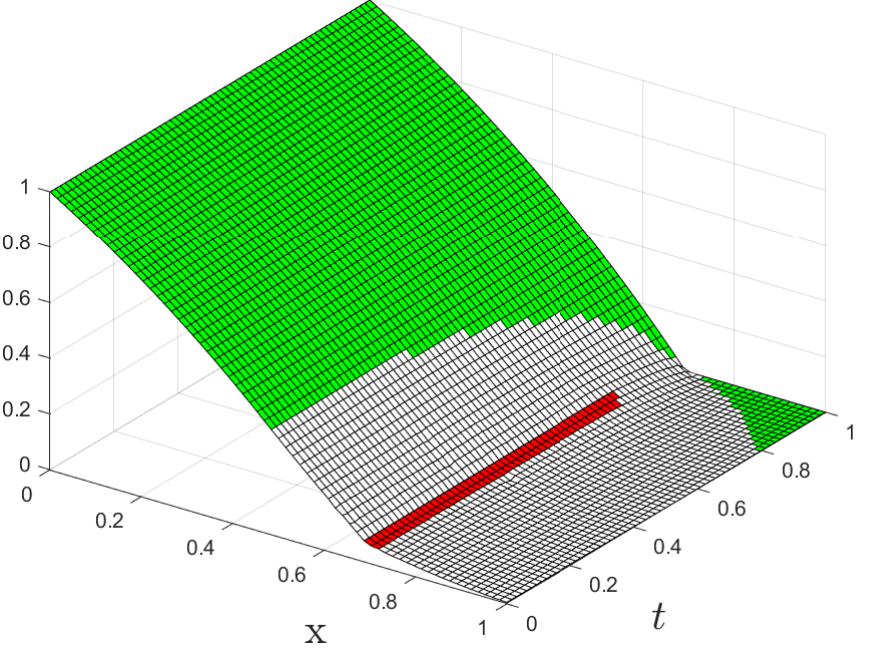}}
\subfloat{\includegraphics[width = 0.24\textwidth]{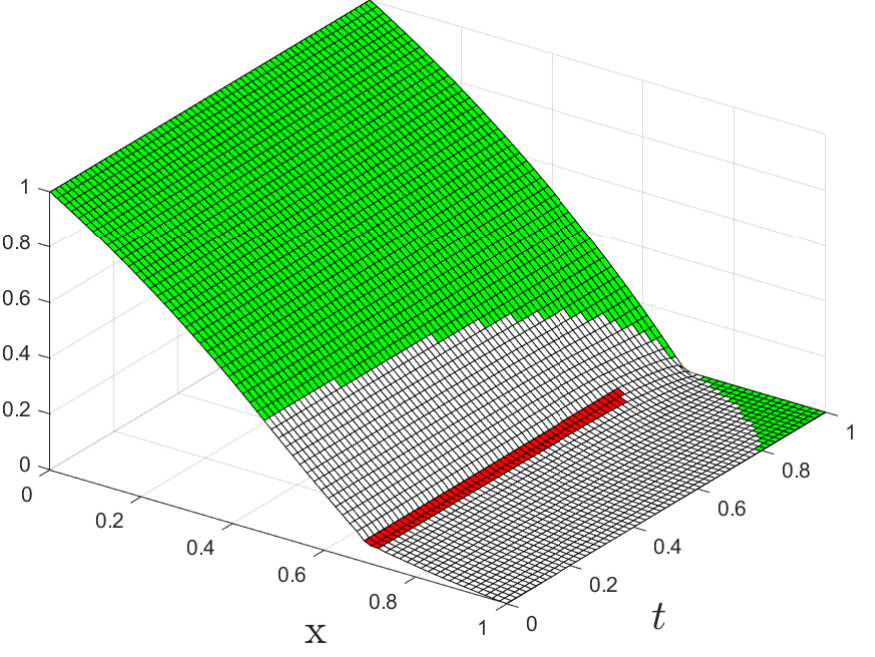}}
\\
\subfloat{\includegraphics[width = 0.24\textwidth]{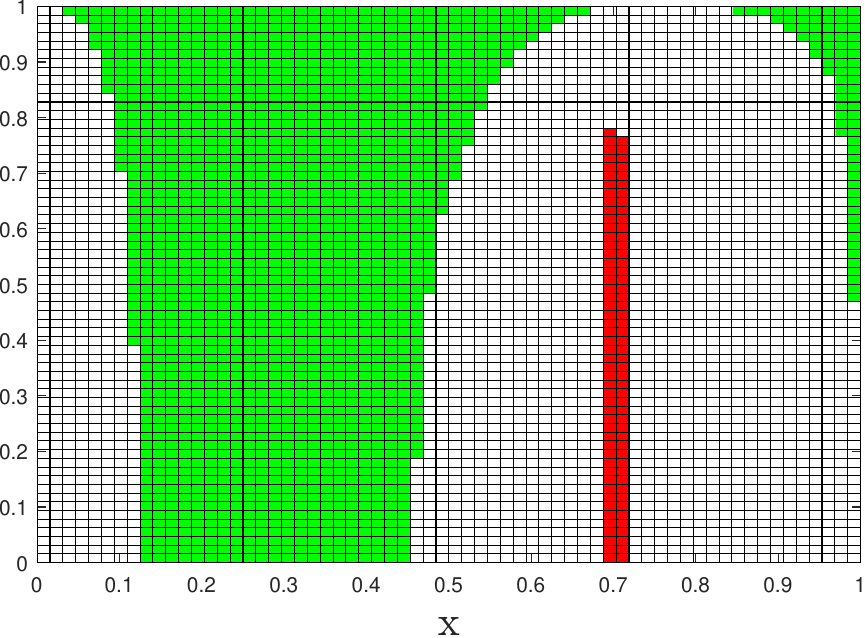}}
\subfloat{\includegraphics[width = 0.24\textwidth]{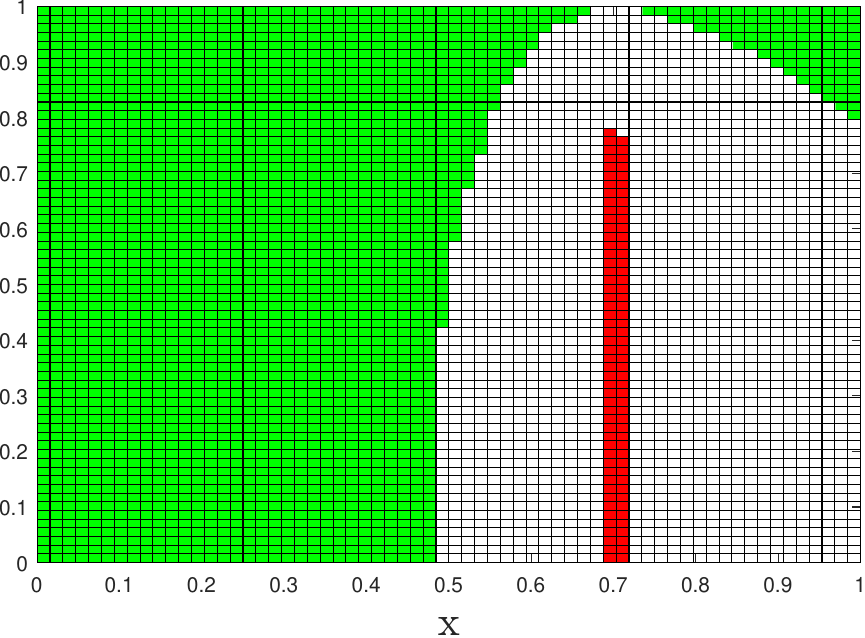}}
\subfloat{\includegraphics[width = 0.24\textwidth]{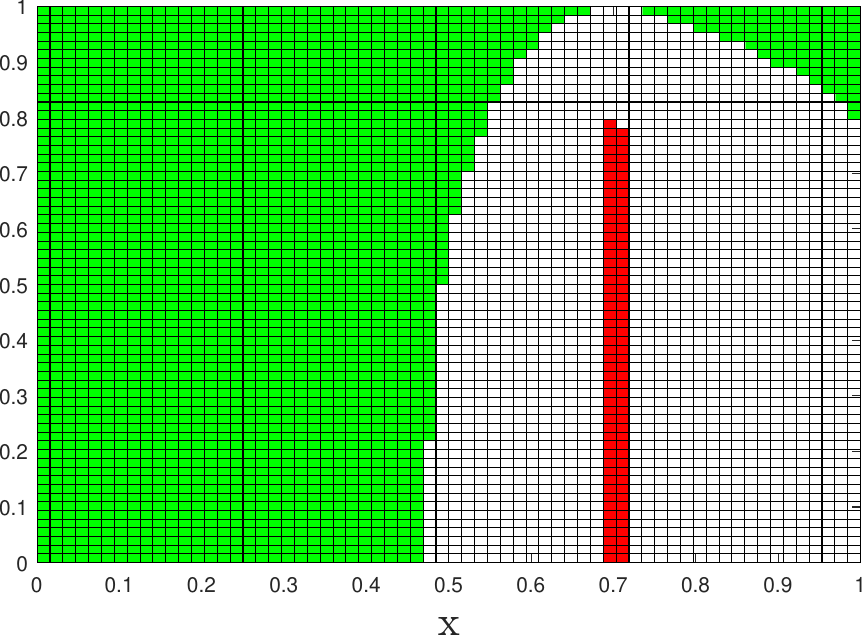}}
\caption{From left to right: solution for the BFGS solver,
solution for the BR solver and
the numerical solution computed with a semi-Lagrangian scheme.}
\label{fig:freeBoundary_exp4}
\end{figure}

For better illustration we display in \Cref{fig:snapshot} the graph of the numerical solution $\x\rightarrow \ouhkRL(t,\x,\p)$ for fixed $t=0.25$, $\p=0$. We observe that in the numerical solution the lower obstacle is active approximately between $\x = 0$  and $\x = 0.55$;
and the upper obstacle is active approximately between $\x = 0.55$ and $\x = 0.8$. 
\begin{figure}[t!]
\subfloat{\includegraphics[width = 0.4\textwidth]{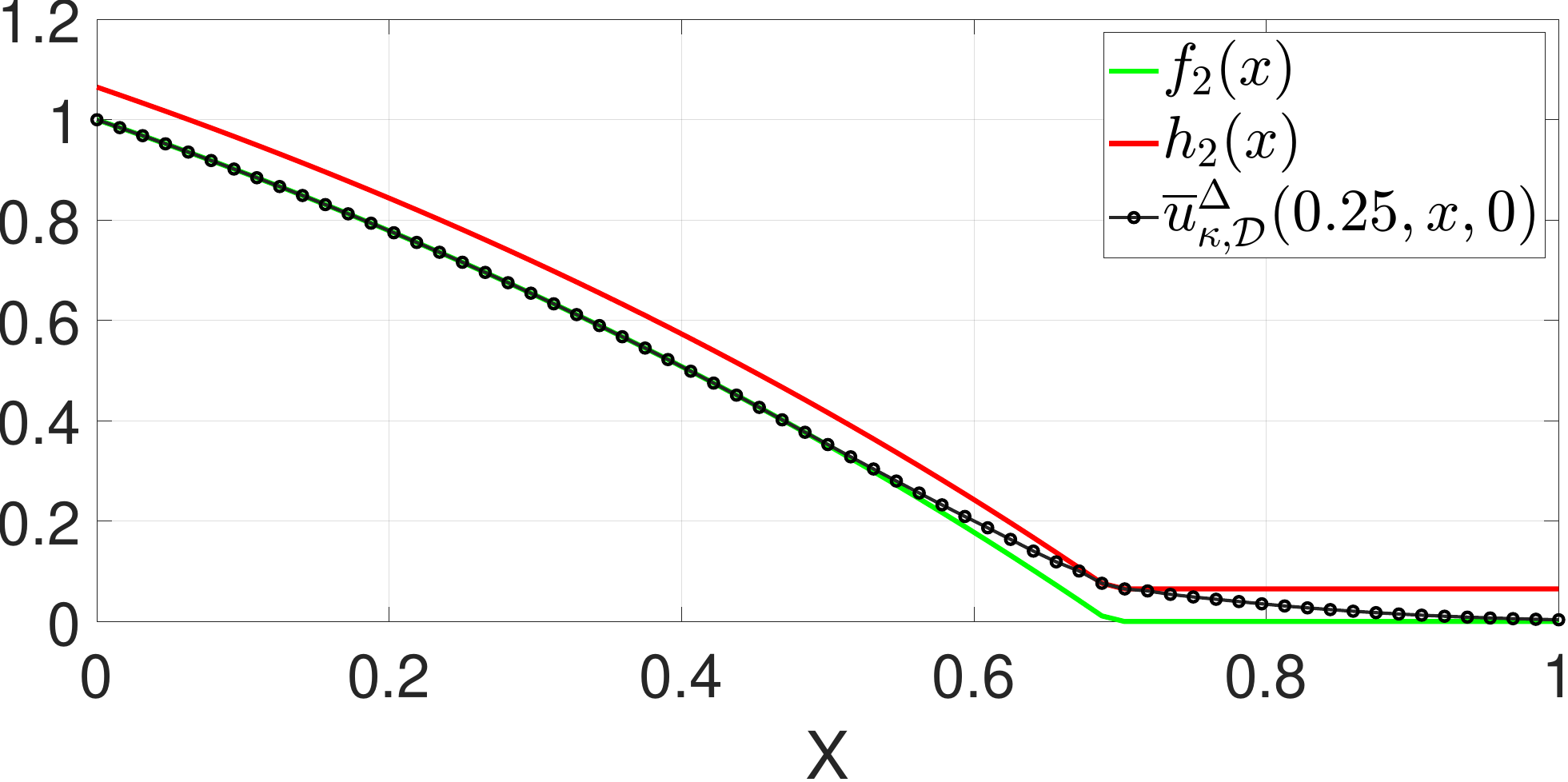}}
\caption{
Graph of the solution $\x \rightarrow \ouhkRL$ at $t=0.25$, $\p = 0$ with (BR) nonlinear solver.}
\label{fig:snapshot}
\end{figure}

\begin{figure}[t!]
	\includegraphics[width = 0.4\textwidth]{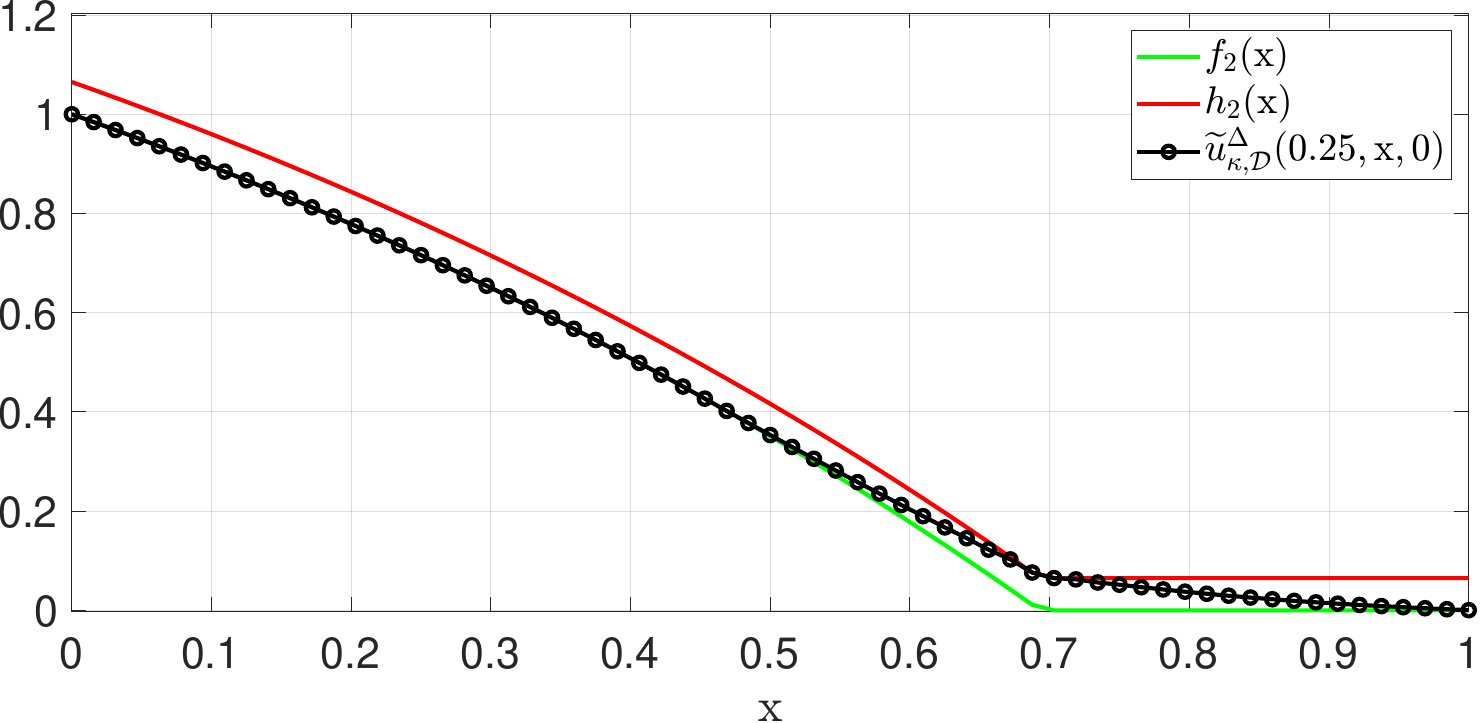}
\quad
	\includegraphics[width = 0.4\textwidth]{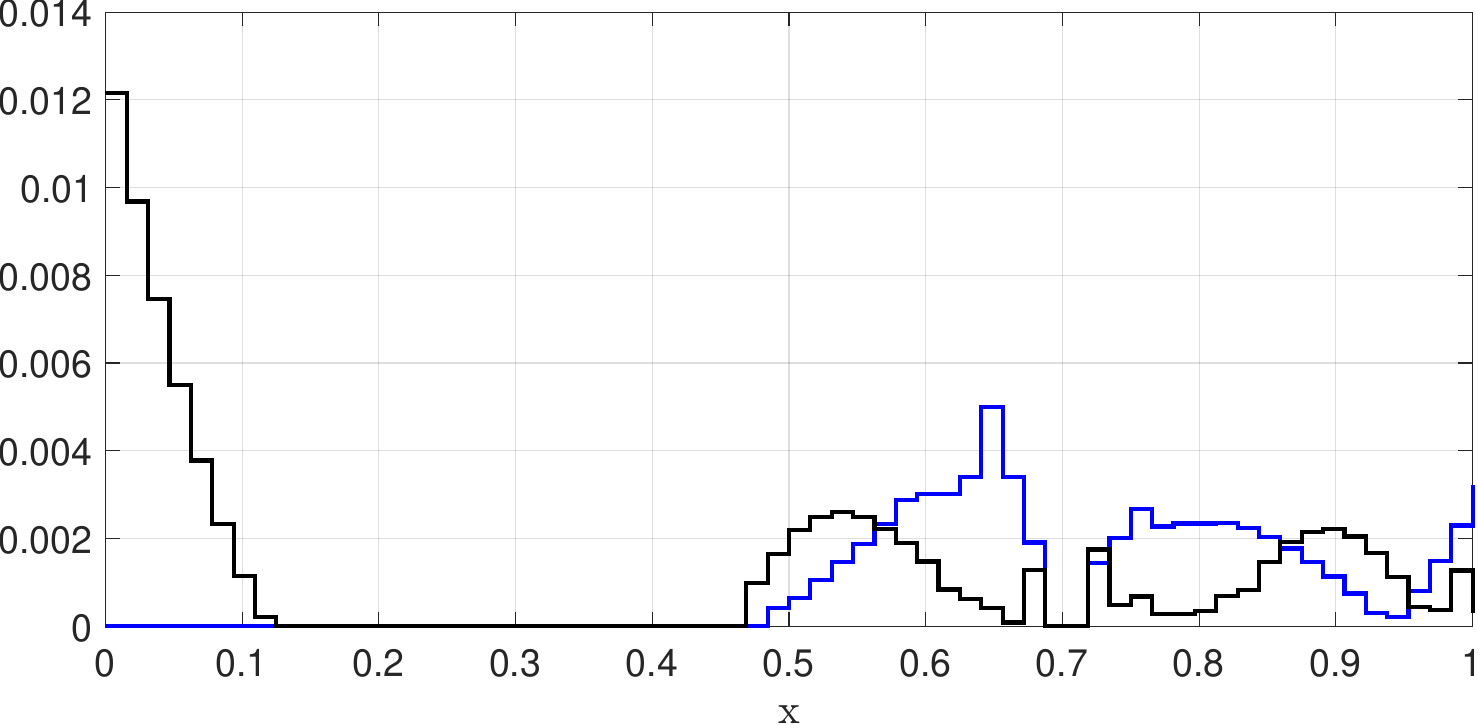}
	\caption{
		\textbf{(Left)} Graph of the solution $\x \rightarrow \tuh_{\kappa,\mD}$ at $t=0.25$, $\p = 0$ with the Semi-Lagrangian scheme.
		\textbf{(Right)} The pointwise error $\vert \tuh_{\kappa,\mD}(0.25,\x,0) - \ouhkRL(0.25,\x,0) \vert$.
		In blue, the solution $\ouhkRL$ were computed with the BR as a nonlinear solver, and in black, with the BFGS. }
	\label{fig:linear_test3_test4}
\end{figure}


\section*{Acknowledgement}
The work of the first and second author was funded by the Deutsche Forschungsgemeinschaft (DFG, German Research Foundation) -- Project-ID 317210226 -- SFB 1283.
The third author was funded by the FWF-Project 10.55776/P34681.

\bibliographystyle{abbrv}
\bibliography{dynkin.bib}
\end{document}